\DeclareMathOperator{\diam}{diam}
\DeclareMathOperator{\Leb}{Leb}
\DeclareMathOperator{\interior}{int}
\newcommand{\RNum}[1]{\uppercase\expandafter{\romannumeral #1\relax}}
\numberwithin{equation}{section}
\newtheorem{theorem}{Theorem}[section]
\newtheorem{corollary}[theorem]{Corollary}
\newtheorem{lemma}[theorem]{Lemma}
\newtheorem{definition}[theorem]{Definition}
\newtheorem{remark}[theorem]{Remark}
\newtheorem{assumption}[theorem]{Assumption}
\begin{document}
\pagenumbering{arabic}

\title
{Maximal Large Deviations and Slow Recurrences in Weakly Chaotic Systems}


\author{Leonid Bunimovich\thanks{School of Mathematics, Georgia Institute of Technology, Atlanta, USA. \texttt{leonid.bunimovich@math.gatech.edu}}, Yaofeng Su\thanks{School of Mathematics, Georgia Institute of Technology, Atlanta, USA. \texttt{yaofeng.su@math.gatech.edu}}}

\date{\today}

\maketitle

\begin{abstract}
We prove a  maximal-type large deviation principle for dynamical systems with arbitrarily slow polynomial mixing rates.  Also several applications, particularly to billiard systems, are presented.
\end{abstract}

\tableofcontents

%
\section{Introduction}\ \par

A fundamental Birkhoff's ergodic theorem says that if $\mu$ is a measure preserved by an ergodic dynamics $f:X \to X$, then almost surely for any function $\phi$ with $\int \phi d\mu=0$,
\begin{equation*}\lim_{n\to \infty}\frac{\sum_{i \le n} \phi \circ f^i}{n}=0.  
\end{equation*}

Equivalently, for any $\epsilon>0$, $\mu$-a.s. $x \in X$ there is $N=N_x>0$, such that 
\begin{equation*}
   \sup_{n\ge N} \Big|\frac{\sum_{i \le n}\phi \circ f^i(x)}{n}\Big|\le \epsilon.
\end{equation*}

The large deviations approach provides quantitative estimates for $N$. If the dynamics is sufficiently strongly chaotic (e.g., it has fast mixing rates or, in other words, correlations decay exponentially) and $\phi$ is sufficiently smooth, then there is a constant $C>0$, such that for any $n, N \ge 1$
\begin{equation*}
    \mu\Big(\Big| \frac{\sum_{i \le n}\phi \circ f^i}{n}\Big|\ge \epsilon\Big) \approx e^{-Cn}, 
\end{equation*} and 
\begin{equation*}
    \mu\Big(\sup_{n \ge N}\Big| \frac{\sum_{i \le n}\phi \circ f^i}{n}\Big|\ge \epsilon\Big) \approx \sum_{n\ge N}e^{-Cn} \approx e^{-CN}.
\end{equation*}

If the dynamics is weakly chaotic (i.e., slow mixing rates, see e.g. \cite{melbourne09}), then this estimate becomes weaker (see \cite{melbourne09} for more details), \begin{equation*}
    \mu\Big(\Big| \frac{\sum_{i \le n}\phi \circ f^i}{n}\Big|\ge \epsilon\Big) \precsim n^{-a} \text{ where }a>0.
\end{equation*}

Then, on contrary to the case of exponential estimate, a direct summation over $n$ does not give analogous estimate 
\begin{equation*}
    \mu\Big(\sup_{n \ge N}\Big| \frac{\sum_{i \le n}\phi \circ f^i}{n}\Big|\ge \epsilon\Big)\precsim N^{-a} \text{ where }a >0.
\end{equation*}

We call this type of estimate a \textbf{maximal large deviation}. 

One of the motivations to consider maximal large deviations is due to a typical approach (called an inducing method) in ergodic theory. This approach is usually aimed on finding a reference set $Y \subsetneq X$ and its first return time $R: Y \to \mathbb{N}\bigcup \{0\}$, in order to prove  some nice property for the first return map $f^R: Y \to Y$. Then this property gets lifted to the original dynamics $f: X \to X$. This lifting technique relies on a quantitative estimate of the time $N$ in $\sup_{n\ge N} \frac{\sum_{i \le n}R \circ (f^R)^i}{n}$, which is related to the maximal large deviation property considered in the present paper. 

One of the main results (Theorem \ref{mld}) in the paper a proof of a maximal large deviation estimate for dynamical systems with arbitrarily slow polynomial mixing rates. Our estimates for maximal large deviations are optimal, in comparison to the corresponding estimates obtained in \cite{anothermld1, anothermld2} (See more details in Remark \ref{compare}). Such optimal estimates provide somewhat better answers to various questions which arise in applications.

The main body of the present paper is devoted to applications of Theorem \ref{mld}. We improve/strengthen the results in in the papers \cite{vaientiadv, demersadv, Subbb} to the systems with arbitrarily slow polynomial mixing rates. More precisely, \begin{enumerate}
    \item The paper \cite{vaientiadv} deduced the existence of  a Gibbs-Markov-Young (GMY) structure, introduced in \cite{Young2}, for sufficiently fast mixing rates. Our Theorems \ref{existgmy1} and \ref{existgym2} show that an arbitrarily slow polynomial mixing rate is enough to guarantee the existence of a GMY structure.
    \item The paper \cite{demersadv} using the inducing method to study certain limit laws for one-dimensional open systems with fast mixing. Our Theorem \ref{extendpoissonthm} extends it to arbitrarily slow polynomially 
     mixing one-dimensional dynamics (e.g. to intermittent maps).
    \item The Theorem \ref{fplthm} derives a convergence rate to Poisson limit laws for arbitrarily slow polynomially mixing billiards systems (e.g. for stadium-type billiards), while the technique employed in \cite{Subbb} does not do it.
\end{enumerate} 

The structure of the paper is the following. The section \ref{defnotation} provides some necessary definitions and notations used throughout the paper. A proof of the maximal large deviations theorem is given in section \ref{mldsection}. The section \ref{existencesection} deals with  Gibbs-Markov-Young structures. Applications to billiards and to one dimensional non-uniformly expanding maps are presented in the sections \ref{billiardsection} and \ref{onedsection}, respectively.


\section{Definitions and Notations}\label{defnotation}
We start by introducing some notations used throughout the paper.
\begin{enumerate}
 \item $C_z$ denotes a constant depending on $z$.
    \item The notation ``$a_n \precsim_z b_n$"  (``$a_n=O_{z}(b_n)$") means that there is a constant $C_z \ge 1$ such that (s.t.) $ a_n \le C_z  b_n$ for all $n \ge 1$, whereas the notation ``$a_n \precsim b_n$" (or ``$a_n=O(b_n)$") means that there is a constant $C \ge 1$ such that $ a_n \le C  b_n$ for all $n \ge 1$. Next, ``$a_n \approx_z b_n$" and $a_n=C_z^{\pm 1}b_n$ mean that there is a constant $C_z \ge 1$ such that  $ C_z^{-1}  b_n \le a_n \le C_z b_n$ for all $n \ge 1$. Further, the notations ``$a_n=C^{\pm1} b_n$" and ``$a_n \approx b_n$" means that there is a constant $C \ge 1$ such that $ C^{-1}  b_n \le a_n \le C b_n$ for all $n \ge 1$. Finally, ``$a_n =o(b_n)$" means that  $\lim_{n \to \infty}|a_n/b_n|=0$. 
      \item The notation $\mathbb{P}$ refers to a probability distribution on the probability space, where a random variable lives, and $\mathbb{E}$ denotes the expectation of a random variable. We do not specify the probability space and the probability when we use these two notations if the context is clear.
    \item $\mathbb{N}=\{1,2,3,\cdots\}$, $\mathbb{N}_0=\{0,1,2,3,\cdots\}$. 
    \item $S^1:=[-\pi/2,\pi/2]$ (endpoints are not identified).
    \item $\mu_A$ denotes a probability measure on the set $A$.
    \item $\mathcal{T}(\mathcal{A})$ denotes a tangent bundle of a (sub)manifold $\mathcal{A}$, while $\mathcal{T}_x(\mathcal{A})$ denotes a tangent space at the point $x \in \mathcal{A}$.
\end{enumerate}

\begin{definition}[Transfer operators]\label{transferoperator}\ \par
We say that $P$ is a transfer operator associated to a dynamical system $(\mathcal{M}, f, \mu_\mathcal{M})$, if it satisfies the relation $\int g \cdot P(\phi) d\mu_\mathcal{M}= \int g \circ f \cdot \phi d\mu_\mathcal{M} \text{ for all } \phi \in L^1, g \in L^{\infty}$. Suppose that the $\sigma$-algebra of $\mathcal{M}$ is $\mathcal{B}$, then $[P(\phi)] \circ f=\mathbb{E}[\phi|f^{-1}\mathcal{B}]$ almost surely.
\end{definition}
 
\section{A Theorem on Maximal Large Deviations}\label{mldsection}

In this section the following main technical theorem (called a theorem on maximal large deviations) of this paper is proved. We conclude this section with a remark about the papers \cite{anothermld1, anothermld2}. 

\begin{theorem}[Maximal large deviations]\label{mld} \par
Suppose that $\phi\in L^{\infty}$ has zero mean. If there are constants $p\in \mathbb{N}, C_{\phi}>0, \beta \in (0, p)$ such that
\begin{equation*}
 ||P^n(\phi)||^p_p\le C_{\phi} n^{-\beta}  \text{ for any } n \ge 1.
\end{equation*} 

Then for any $\epsilon>0, N \ge 1$  \begin{equation*}
    \mu_\mathcal{M}\Big(\sup_{n \ge N}\Big| \frac{\sum_{i \le n}\phi \circ f^i}{n}\Big|\ge \epsilon\Big)\precsim_{\epsilon, p, \beta}\max\{||\phi||^{p}_{\infty}, C_{\phi}\}\cdot  ||\phi||^{p}_{\infty} \cdot N^{-\beta },
\end{equation*} and \begin{equation*}
    \Big|\Big|\sup_{n \ge N}\Big| \frac{\sum_{i \le n}\phi \circ f^i}{n}\Big|\Big|\Big|^{2p}_{2p}\precsim_{p, \beta}\max\{||\phi||^{p}_{\infty}, C_{\phi}\}\cdot  ||\phi||^{p}_{\infty} \cdot N^{-\beta}.
\end{equation*} 

In particular, if the function $\phi\in L^{\infty}$ satisfies the inequality $||P^n(\phi)||_1\le C_{\phi} n^{-\beta}$ for some $C_{\phi}>0$ and any $n \ge 1$, then for any $\epsilon>0$
     \begin{gather*}
     ||P^n(\phi)||^{1+\left \lceil \beta \right \rceil}_{1+\left \lceil \beta \right \rceil}\le C_{\phi} ||\phi||^{\left \lceil \beta \right \rceil}_{\infty} n^{-\beta},\\ 
     \mu_\mathcal{M}\Big(\sup_{n \ge N}\Big| \frac{\sum_{i \le n}\phi \circ f^i}{n}\Big|\ge \epsilon\Big)\precsim_{\epsilon,  \beta}\max\{||\phi||^{1+\left \lceil \beta \right \rceil}_{\infty}, C_{\phi} ||\phi||^{\left \lceil \beta \right \rceil}_{\infty}\}\cdot  ||\phi||^{1+\left \lceil \beta \right \rceil}_{\infty}  \cdot N^{-\beta }.
     \end{gather*}
\end{theorem}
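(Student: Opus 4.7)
Let $S_n:=\sum_{i\le n}\phi\circ f^i$ throughout. The argument has three stages: first (a) establish a single-time moment bound $\|S_n\|_{2p}^{2p}\precsim n^{2p-\beta}$, then (b) promote it to a maximal bound $\|\max_{n\le M}|S_n|\|_{2p}^{2p}\precsim M^{2p-\beta}$ without logarithmic loss, and finally (c) extract the $\sup_{n\ge N}|S_n/n|$ estimate by a second dyadic sum, after which Markov's inequality yields the tail bound. For (a) I would expand $\mathbb{E}[S_n^{2p}]=\sum_{\vec i}\mathbb{E}[\prod_l\phi\circ f^{i_l}]$, sort each tuple $(i_1\le\cdots\le i_{2p})$, use $f$-invariance of $\mu_\mathcal{M}$ to assume $i_1=0$, and split the ordered expectation at the position $l^*$ of the largest gap $k^*=i_{l^*+1}-i_{l^*}$. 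The transfer-operator identity $\int g\cdot(\psi\circ f^m)\,d\mu_\mathcal{M}=\int P^m(g)\cdot\psi\,d\mu_\mathcal{M}$ together with the cochain relation $P^m((h\circ f^m)g)=h\cdot P^m(g)$ allow one to peel off bounded factors of $\phi$ and extract a single factor controlled by $\|P^{k^*}\phi\|_p^p\le C_\phi(k^*)^{-\beta}$. Summing this per-tuple bound against the combinatorial count of tuples with largest gap $\approx k$ delivers the $n^{2p-\beta}$ growth with constant $\max\{\|\phi\|_\infty^p,C_\phi\}\|\phi\|_\infty^p$. An alternative route is to bound $\mathbb{E}[S_n^2]\precsim n^{2-\beta/p}$ directly (using $\|P^k\phi\|_1\le\|P^k\phi\|_p$ from Jensen and the summation $\sum_{k\le n}k^{-\beta/p}\precsim n^{1-\beta/p}$ afforded by $\beta/p<1$) and invoke a Rosenthal-type moment inequality $\|S_n\|_{2p}^{2p}\le C_p((\mathbb{E}[S_n^2])^p+n\|\phi\|_{2p}^{2p})$, which produces the same bound after substitution.

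For (b) I would write $M=2^K$ and decompose each $S_n$, $n\le M$, as a sum of at most $K$ dyadic block-increments $S_{a_i+2^i}-S_{a_i}$ (one per binary digit of $n$). The $L^{2p}$ triangle inequality, stationarity, and a union bound over the $2^{K-i}$ starts at scale $2^i$ give
\[
\|\max_{n\le M}|S_n|\|_{2p}\precsim 2^{K/(2p)}\sum_{i=0}^{K-1}2^{i(2p-\beta-1)/(2p)}.
\]
Because $\beta<p\le 2p-1$, the exponent $(2p-\beta-1)/(2p)$ is positive, the geometric series is dominated by its last term, and the $2p$-th power is $\precsim M^{2p-\beta}$ with \emph{no} logarithmic factor. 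Stage (c) is then a direct dyadic sum,
\[
\Big\|\sup_{n\ge N}|S_n/n|\Big\|_{2p}^{2p}\le\sum_{k:\,2^k\ge N/2}2^{-2pk}\|\max_{n<2^{k+1}}|S_n|\|_{2p}^{2p}\precsim\sum_{k:\,2^k\ge N/2}2^{-k\beta}\precsim N^{-\beta},
\]
and Markov's inequality converts this into the tail bound with $\epsilon^{-2p}$ dependence. For the ``In particular'' clause, the $L^\infty$-contractivity $\|P^n\phi\|_\infty\le\|\phi\|_\infty$ and Lyapunov's inequality $\|P^n\phi\|_p^p\le\|P^n\phi\|_\infty^{p-1}\|P^n\phi\|_1\le C_\phi\|\phi\|_\infty^{p-1}n^{-\beta}$ with $p:=1+\lceil\beta\rceil$ (so $\beta\in(0,p)$) verify the main hypothesis with $C_\phi$ replaced by $C_\phi\|\phi\|_\infty^{\lceil\beta\rceil}$, after which the main statement applies verbatim.

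The principal obstacle is the cochain-unfolding in stage (a): intermediate products such as $\phi\cdot P^j\phi$ do not have zero mean, so a naive splitting at the largest gap leaves uncontrolled constant contributions, and one must carefully separate centered from un-centered pieces at each splitting step. The alternative Rosenthal route shifts the burden onto justifying the inequality in a non-martingale setting, e.g.\ via a martingale approximation robust to the non-summability of $\sum_k\|P^k\phi\|_p$. Obtaining the sharp form $\max\{\|\phi\|_\infty^p,C_\phi\}$ of the constant, rather than a weaker sum $\|\phi\|_\infty^p+C_\phi$, additionally requires partitioning the tuple sum into small-gap and large-gap regimes before applying the bookkeeping above.
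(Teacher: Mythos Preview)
Your stages (b) and (c) are correct and give a genuinely different path from the paper. For the maximal bound (b) the paper does not use your dyadic/Rademacher--Menchov decomposition but instead runs a Moricz-type induction: it shows that if $\mathbb{E}M_{N,N}^{2p}\le K\,g(N)^p$ with $g(N)\approx N^{2-\beta/p}$ then the same holds for $M_{2N,2N}$ and $M_{2N+1,2N+1}$, using $2p-\beta>1$ to close the induction. For (c) the paper sets $M_N:=\sup_{n\ge N}|S_n/n|$, proves the one-step relation $M_N\le M_{2N}+C_N$ with $\|C_N\|_{2p}\precsim N^{-\beta/(2p)}$, iterates it, and uses Birkhoff to kill the remainder; your direct dyadic sum over $k$ is cleaner and equally sharp. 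Both approaches avoid logarithms because $\beta<p\le 2p-1$.

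The gap is in stage (a). Your Rosenthal-type bound $\|S_n\|_{2p}^{2p}\le C_p\bigl((\mathbb{E}S_n^2)^p+n\|\phi\|_{2p}^{2p}\bigr)$ is valid for sums of independent variables or martingale differences, not for general stationary $\phi\circ f^i$, and as you note the Gordin martingale approximation fails here because $\sum_k\|P^k\phi\|_p$ diverges when $\beta/p<1$. Your direct-expansion route with the largest-gap splitting is salvageable in principle but, as you also note, the non-centered intermediate products make the bookkeeping delicate. The paper bypasses both difficulties by invoking Rio's covariance inequality (Melbourne, \emph{Trans.\ AMS} 2009, p.~1738):
\[
\mathbb{E}|S_N|^{2p}\precsim_p\Bigl[\sum_{n=1}^N\sup_{j\le n}\bigl\|\phi\circ f^n\cdot\mathbb{E}\bigl[\textstyle\sum_{i=j}^n\phi\circ f^i\,\big|\,f^{-n}\mathcal{B}\bigr]\bigr\|_p\Bigr]^p
=\Bigl[\sum_{n=1}^N\sup_{j\le n}\bigl\|\phi\cdot P^n\bigl(\textstyle\sum_{i=j}^n\phi\circ f^i\bigr)\bigr\|_p\Bigr]^p.
\]
Since $\|\phi\cdot P^n(\sum_{i=j}^n\phi\circ f^i)\|_p\le\|\phi\|_\infty\sum_{i=j}^n\|P^{n-i}\phi\|_p\precsim\|\phi\|_\infty C_\phi^{1/p}n^{1-\beta/p}$, the bracket is $\precsim N^{2-\beta/p}$ and one gets $\|S_N\|_{2p}^{2p}\precsim\|\phi\|_\infty^p C_\phi\,N^{2p-\beta}$ directly. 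This is the missing ingredient; once you plug it into your (b) and (c), your proof is complete. Your treatment of the ``In particular'' clause is correct.
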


\begin{remark}
Unlike \cite{melbourne09}, we assume a certain decay rate of correlations in $L^p$ for bounded observables, which will be used later to establish an improved estimate (a faster decay rate) for maximal large deviations for unbounded observables.
\end{remark}

To prove Theorem \ref{mld} we will need two following results .
\begin{lemma}\label{ld}
Let the assumptions of Theorem \ref{mld} hold. Then for any $N \ge 1$, \begin{equation*}
    \big|\big|\frac{\sum_{i\le N}\phi \circ f^i}{N}\big|\big|^{2p}_{2p} \precsim_{p, \beta}||\phi||^p_{\infty} C_{\phi}N^{-\beta}.
\end{equation*}
\end{lemma}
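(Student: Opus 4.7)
The plan is to estimate $\|S_N\|_{2p}^{2p}=\mathbb{E}[S_N^{2p}]$ by expanding into $2p$-point correlators and exploiting the $L^p$-decay hypothesis, then dividing by $N^{2p}$.

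Expand
$\mathbb{E}[S_N^{2p}]=\sum_{i_1,\ldots,i_{2p}=1}^N\mathbb{E}\bigl[\prod_{k=1}^{2p}\phi\circ f^{i_k}\bigr]$,
and by symmetry reduce to the ordered tuples $1\le i_1\le\cdots\le i_{2p}\le N$ with gaps $\ell_k:=i_k-i_{k-1}$. Using invariance of $\mu_{\mathcal M}$ and the adjoint identity $\int u\cdot v\circ f^n\,d\mu_{\mathcal M}=\int P^n u\cdot v\,d\mu_{\mathcal M}$ together with the product identity $P^n(u\cdot v\circ f^\ell)=v\circ f^{\ell-n}\cdot P^n u$ (for $n\le\ell$; with an analogous rule for $n>\ell$), each correlator reduces to a nested integral in which the operators $P^{\ell_k}$ act on $\phi$.

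The central step is the per-tuple pairing bound
$\bigl|\mathbb{E}[\prod_k\phi\circ f^{i_k}]\bigr|\precsim_p \|\phi\|_\infty^p\prod_{j=1}^p \|P^{\ell_{2j}}\phi\|_p$.
I would prove this by grouping consecutive indices into pairs $(i_{2j-1},i_{2j})$ and, for each pair, writing $\phi\cdot\phi\circ f^{\ell_{2j}}=\sigma_{2j}+Z_{2j}$ where $\sigma_{2j}:=\int\phi\cdot\phi\circ f^{\ell_{2j}}d\mu_{\mathcal M}$ satisfies $|\sigma_{2j}|\le\|P^{\ell_{2j}}\phi\|_p\|\phi\|_\infty$ by H\"older (and $\|\phi\|_{p/(p-1)}\le\|\phi\|_\infty$), while $Z_{2j}$ is mean-zero with $\|Z_{2j}\|_\infty\le 2\|\phi\|_\infty^2$. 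Expanding $\prod_{j=1}^p(\sigma_{2j}+Z_{2j}\circ f^{i_{2j-1}})$ over the $2^p$ subsets of pairs and integrating, the fully-matching term immediately gives $\|\phi\|_\infty^p\prod_j\|P^{\ell_{2j}}\phi\|_p$, while each non-matching cross term is handled by re-applying the product identity to the remaining $Z_j$'s, which unwinds them into further $\|P^{\ell_k}\phi\|_p$-decays supplied by the "between-pair" gaps.

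Summing over ordered tuples, the base index $i_1$ contributes $\le N$, the $p-1$ odd-indexed gaps $\ell_{2j+1}$ contribute $\le N^{p-1}$, and the $p$ even-indexed gaps contribute $\prod_{j=1}^p\sum_{\ell\ge 1}\|P^\ell\phi\|_p\precsim_\beta (C_\phi^{1/p}N^{1-\beta/p})^p=C_\phi N^{p-\beta}$ (since $\beta/p<1$ ensures $\sum_{\ell\ge 1}\ell^{-\beta/p}\precsim_\beta N^{1-\beta/p}$). Multiplying,
$\mathbb{E}[S_N^{2p}]\precsim_{p,\beta}\|\phi\|_\infty^p\cdot N\cdot N^{p-1}\cdot C_\phi N^{p-\beta}=\|\phi\|_\infty^p C_\phi N^{2p-\beta}$,
and dividing by $N^{2p}$ yields the lemma.

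\emph{Main obstacle.} The per-tuple pairing bound. Since the decay hypothesis applies only to $\phi$ itself, the mean-zero remainders $Z_{2j}$ (products involving $\phi$) cannot be directly decayed by later applications of $P^n$. The identity $P^n(u\cdot v\circ f^\ell)=v\circ f^{\ell-n}\cdot P^n u$ (and its analogue for $n>\ell$) is the essential tool for "unwinding" such products into $\|P^n\phi\|_p$-decays, but the combinatorial bookkeeping — tracking which gaps supply decay and which remain as $\|\phi\|_\infty$-factors across the $2^p$ expansion terms, and verifying that each non-matching contribution is dominated by the full-matching one after summation — is intricate and is the technical heart of the argument.
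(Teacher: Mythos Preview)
Your approach differs from the paper's, which invokes Rio's inequality (as stated in \cite{melbourne09}) as a black box: Rio's inequality bounds $\mathbb{E}|S_N|^{2p}$ by $\big[\sum_{n\le N}\sup_{j\le n}\|\phi\circ f^n\cdot\mathbb{E}[\sum_{i=j}^n\phi\circ f^i\,|\,f^{-n}\mathcal B]\|_p\big]^p$, and since $\mathbb{E}[\phi\circ f^i\,|\,f^{-n}\mathcal B]=(P^{n-i}\phi)\circ f^n$, each summand is $\le\|\phi\|_\infty\sum_{i\le n}\|P^{n-i}\phi\|_p\precsim\|\phi\|_\infty C_\phi^{1/p}n^{1-\beta/p}$. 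Summing and raising to the $p$th power gives $N^{2p-\beta}$ immediately. The point is that Rio's inequality reduces the $2p$th moment to \emph{two-point} data only, so the single-observable hypothesis $\|P^n\phi\|_p^p\le C_\phi n^{-\beta}$ suffices directly.

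Your direct expansion, by contrast, requires control of genuine $2p$-point correlators, and the ``per-tuple pairing bound'' you state does not follow from the hypothesis. You correctly flag this as the main obstacle, but the $\sigma+Z$ resolution you sketch does not close it. Take $p=2$: the all-$Z$ cross term is
\[
\mathbb{E}[Z_2\circ f^{i_1}\cdot Z_4\circ f^{i_3}]=\int P^{\ell_3}\big(\phi\cdot P^{\ell_2}\phi\big)\cdot\big(\phi\cdot\phi\circ f^{\ell_4}\big)\,d\mu-\sigma_2\sigma_4.
\]
Since $P$ is an $L^2$-contraction, the best you can extract here is $\|\phi\|_\infty^3\|P^{\ell_2}\phi\|_2$ (or, symmetrically, $\|\phi\|_\infty^3\|P^{\ell_4}\phi\|_2$), \emph{not} the product $\|P^{\ell_2}\phi\|_2\|P^{\ell_4}\phi\|_2$. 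The ``between-pair'' gap $\ell_3$ cannot supply decay either, because the hypothesis gives decay of $P^n$ only on $\phi$, not on products like $\phi\cdot P^{\ell_2}\phi$. Summing this cross-term bound over tuples yields $N\cdot N^{1-\beta/2}\cdot N\cdot N=N^{4-\beta/2}$, which dominates the target $N^{4-\beta}$. The same loss propagates for larger $p$: with only single-observable $L^p$ decay you cannot get $p$ independent decay factors from a $2p$-point correlator, so the direct combinatorial route does not reach $N^{2p-\beta}$ under the stated assumption. This is precisely what Rio's inequality is designed to circumvent.
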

\begin{proof}Since $p > \beta$, we have for any $j\le n \le N$, \begin{align*}
    ||\phi \cdot P^n(\sum_{i=j}^{n}\phi \circ f^i)||_{p} &\le ||\phi||_{\infty} \cdot ||P^n(\sum_{i=j}^{n}\phi \circ f^i)||_{p}\\
    &\le ||\phi||_{\infty} \cdot \sum_{i=j}^{n}||P^{n-i}\phi||_p\precsim_{p, \beta} ||\phi||_{\infty} \cdot C_{\phi}^{1/p} (n-1)^{1-\beta/p}.
\end{align*} 

By Rio's inequality (see page 1738 of \cite{melbourne09}), Definition \ref{transferoperator} and because $f_{*}\mu=\mu$, we have \begin{align*}
    \int \Big|\sum_{i \le N}\phi \circ f^i\Big|^{2p}d\mu
     &\precsim_p \Big[\sum_{n=1}^{N} \sup_{j \le n}\big|\big|\phi \circ f^n \cdot \mathbb{E}\Big[(\sum_{i=j}^{n}\phi \circ f^i)\Big| f^{-n}\mathcal{B}\Big]\big|\big|_{p}\Big]^{p}\\
     &\precsim_p \Big[\sum_{n=1}^{N} \sup_{j \le n}\big|\big|\phi \cdot P^n(\sum_{i=j}^{n}\phi \circ f^i)\big|\big|_{p}\Big]^{p}\\
    &\precsim_{p, \beta} ||\phi||^p_{\infty}C_{\phi} \Big[ \sum_{n=1}^{N} (n-1)^{1-\beta/p}\Big]^p\precsim_{p, \beta} ||\phi||^p_{\infty}C_{\phi} N^{2p-\beta},
\end{align*} which implies that $\big|\big|\frac{\sum_{i\le N}\phi \circ f^i}{N}\big|\big|^{2p}_{2p} \precsim_{p, \beta}||\phi||^p_{\infty} C_{\phi} N^{-\beta}$.
\end{proof}

\begin{lemma}\label{max}
Let again the assumptions of Theorem \ref{mld} hold. Then for any $N\ge 1$ \begin{equation*}
      \Big|\Big|\frac{\max_{n\le N}|\sum_{n\le i\le  N}\phi \circ f^i|}{N}\Big|\Big|^{2p}_{2p} \precsim_{p,\beta} \max\{||\phi||^{p}_{\infty}, C_{\phi}\} ||\phi||^{p}_{\infty}N^{-\beta}.
\end{equation*}
\end{lemma}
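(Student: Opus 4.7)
The plan is to deduce the maximal estimate from Lemma \ref{ld} via a dyadic chaining. Writing $S_{a,b}:=\sum_{a\le i\le b}\phi\circ f^i$ and using $S_{n,N}=S_{0,N}-S_{0,n-1}$, the triangle inequality gives
\[
\max_{n\le N}|S_{n,N}|\le |S_{0,N}|+\max_{0\le m<N}|S_{0,m}|.
\]
Lemma \ref{ld} already bounds the first term, with $\||S_{0,N}|/N\|_{2p}^{2p}\precsim_{p,\beta}\|\phi\|_\infty^p C_\phi N^{-\beta}$, so the task reduces to controlling the maximal partial sum $\max_{m\le N}|S_{0,m}|$ at the same order.

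Before invoking the chaining, I would upgrade Lemma \ref{ld} to a uniform interval bound. Because $\mu$ is $f$-invariant, $\|S_{a,a+m}\|_{2p}=\|S_{0,m}\|_{2p}$, and combining Lemma \ref{ld} (useful for $m\ge 2$) with the trivial $L^\infty$ estimate $\|S_{0,1}\|_{2p}^{2p}\le\|\phi\|_\infty^{2p}$ (which handles $m=1$) yields
\[
\|S_{a,a+m}\|_{2p}^{2p}\precsim_{p,\beta}K\cdot m^{2p-\beta},\qquad m\ge 1,
\]
with $K:=\max\{\|\phi\|_\infty^p,C_\phi\}\cdot\|\phi\|_\infty^p$. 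The factor $\max\{\|\phi\|_\infty^p,C_\phi\}$ is exactly what is needed for $K$ to dominate $\|\phi\|_\infty^{2p}$ at $m=1$ even when $C_\phi\ll\|\phi\|_\infty^p$.

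Next I would perform a standard dyadic decomposition. Each integer $0\le m\le N$ admits a partition of $[0,m]$ into at most $L+1=\lfloor\log_2 N\rfloor+1$ aligned dyadic subintervals (one of each length $2^k$ present in the binary expansion of $m$); moreover, the number of aligned dyadic intervals of length $2^k$ inside $[0,N]$ is at most $N/2^k$. Minkowski's inequality over scales, combined with the elementary estimate $\|\max_j X_j\|_{2p}^{2p}\le\sum_j\|X_j\|_{2p}^{2p}$ and the uniform interval bound above, gives
\[
\Big\|\max_{m\le N}|S_{0,m}|\Big\|_{2p}\le\sum_{k=0}^{L}\Big(\sum_{0\le j<N/2^k}\|S_{j2^k,(j+1)2^k}\|_{2p}^{2p}\Big)^{1/(2p)}\precsim_{p,\beta}K^{1/(2p)}N^{1/(2p)}\sum_{k=0}^{L}(2^k)^{(2p-\beta-1)/(2p)}.
\]
Since $p\ge 1$ and $\beta<p$, the exponent $(2p-\beta-1)/(2p)$ is strictly positive, so the geometric series is dominated by its top term $\approx N^{(2p-\beta-1)/(2p)}$. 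Raising to the $2p$-th power then gives $\|\max_{m\le N}|S_{0,m}|\|_{2p}^{2p}\precsim_{p,\beta}K\,N^{2p-\beta}$; dividing by $N^{2p}$ and combining with the bound for $|S_{0,N}|$ yields the lemma.

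The main technical obstacle is the bookkeeping of the constants: the sharper factor $\max\{\|\phi\|_\infty^p,C_\phi\}\cdot\|\phi\|_\infty^p$ (rather than the naive $\|\phi\|_\infty^p C_\phi$) is forced by the small-scale intervals in the chaining, where Lemma \ref{ld} alone is weaker than the trivial $L^\infty$ bound. One must ensure that this combined factor is preserved through the geometric summation while the correct rate $N^{-\beta}$ is maintained; the rest of the argument is a routine dyadic chaining.
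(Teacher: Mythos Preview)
Your argument is correct and takes a genuinely different route from the paper.  The paper proves the lemma by an inductive doubling scheme: setting $M_{a,b}:=\max_{b\le n\le a}\bigl|\sum_{n\le i\le a}\phi\circ f^i\bigr|$, it bounds $M_{2N,2N}^{2p}$ by $M_{N,N}^{2p}+M_{2N,N}^{2p}$ plus binomial cross terms involving $\bigl|\sum_{N\le i\le 2N}\phi\circ f^i\bigr|$, applies H\"older to the cross terms, and then chooses a constant $K$ so that the hypothesis $\mathbb{E}M_{N,N}^{2p}\le K g(N)^p$ (with $g(N)\approx_{p,\beta}\|\phi\|_\infty C_\phi^{1/p}N^{2-\beta/p}$) propagates from $N$ to $2N$ and $2N+1$; small $N$ are handled by the trivial $L^\infty$ bound, which is precisely where the factor $\max\{\|\phi\|_\infty^p,C_\phi\}$ enters.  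Your approach instead reduces to the classical maximal partial sum via $S_{n,N}=S_{0,N}-S_{0,n-1}$ and then runs a Rademacher--Menshov dyadic chaining, summing the $L^{2p}$ contributions of the dyadic blocks across scales.  The key numerical fact you use, $2p-\beta-1>0$ (since $\beta<p\le 2p-1$), makes the geometric sum over scales dominated by its top term and yields the same rate $N^{2p-\beta}$.  Your argument is more elementary and self-contained, and your remark about the small scales is exactly the right explanation for why $K$ must carry $\max\{\|\phi\|_\infty^p,C_\phi\}$ rather than $C_\phi$ alone; the paper's induction also needs this, but only at the base of the induction, whereas in your chaining it appears at every scale $k=0$.
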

\begin{proof}
The method used here is inspired by \cite{Sutams}. It follows from Lemma \ref{ld} that \[\mathbb{E}|\sum_{i \le N}\phi \circ f^i|^{2p}\le g(N)^p,  \text{ where } g(N)\approx_{p, \beta} ||\phi||_{\infty}C_{\phi}^{1/p} N^{2-\beta/p}.\]
Let $a\ge b$ and $M_{a,b}:=\max_{b\le n\le a}|\sum_{n \le i \le a} \phi \circ f^i|$. If $n\ge N$ then $|\sum_{n\le i\le 2N}\phi\circ f^i|^{2p}\le M_{2N,N}^{2p}$. Now, if $n <N$ then  \begin{align*}
    \Big|\sum_{n\le i\le 2N}\phi\circ f^i\Big|^{2p}&\le \Big[M_{N,N}+\Big|\sum_{N\le i\le 2N}\phi \circ f^i\Big|\Big]^{2p}\\
    &=\sum_{j \le 2p}C^j_{2p}\cdot M_{N,N}^{j}\cdot  \Big|\sum_{N\le i\le 2N}\phi \circ f^i\Big|^{2p-j}\\
    &=M_{N,N}^{2p}+\Big|\sum_{N\le i\le 2N}\phi \circ f^i\Big|^{2p}+\sum_{0<j< 2p}C^j_{2p}\cdot M_{N,N}^{j}\cdot  \Big|\sum_{N\le i\le 2N}\phi \circ f^i\Big|^{2p-j}\\
    &\le M_{N,N}^{2p}+M_{2N,N}^{2p}+\sum_{0<j< 2p}C^j_{2p}\cdot M_{N,N}^{j}\cdot  \Big|\sum_{N\le i\le 2N}\phi \circ f^i\Big|^{2p-j}.
\end{align*}

Therefore, 
\begin{equation*}
    M_{2N,2N}^{2p}\le M_{2N,N}^{2p}+ M_{N,N}^{2p}+\sum_{0<j< 2p}C^j_{2p}\cdot M_{N,N}^{j}\cdot  \Big|\sum_{N\le i\le 2N}\phi \circ f^i\Big|^{2p-j}.
\end{equation*}

From H\"older's inequality, we obtain for any $j\in (0,2p)$ that \begin{align*}
   \mathbb{E}\Big(M_{N,N}^{j}\cdot  \Big|\sum_{N\le i\le 2N}\phi \circ f^i\Big|^{2p-j}\Big)&\le (\mathbb{E}M_{N,N}^{2p})^{j/2p}\cdot \Big(\mathbb{E}\Big|\sum_{N\le i\le 2N}\phi \circ f^i\Big|^{2p}\Big)^{(2p-j)/2p}\\
   &\le (\mathbb{E}M_{N,N}^{2p})^{j/2p}\cdot \Big(\mathbb{E}\Big|\sum_{ i\le N}\phi \circ f^i\Big|^{2p}\Big)^{(2p-j)/2p}\\
   &\le (\mathbb{E}M_{N,N}^{2p})^{j/2p}\cdot g(N)^{(2p-j)/2}.
\end{align*}

Therefore, 
\begin{align*}
    \mathbb{E}M_{2N,2N}^{2p}&\le \mathbb{E}M_{2N,N}^{2p}+ \mathbb{E}M_{N,N}^{2p} +\sum_{0<j< 2p}C^j_{2p}\cdot \mathbb{E} \Big(M_{N,N}^{j}\cdot  \Big|\sum_{N\le i\le 2N}\phi \circ f^i\Big|^{2p-j}\Big)\\
    & \le 2\mathbb{E}M_{N,N}^{2p}+\sum_{0<j< 2p}C^j_{2p}\cdot (\mathbb{E} M_{N,N}^{2p})^{j/2p} g(N)^{(2p-j)/2}.
\end{align*}

Similarly, we have \begin{align*}
    \mathbb{E}M_{2N+1,2N+1}^{2p}
    &\le \mathbb{E}M_{2N+1,N}^{2p}+\mathbb{E}M_{N,N}^{2p}+\sum_{0<j< 2p}C^j_{2p}\cdot (\mathbb{E} M_{N,N}^{2p})^{j/2p} g(N+1)^{(2p-j)/2}\\
    &\le \mathbb{E}M_{N+1,N+1}^{2p}+\mathbb{E}M_{N,N}^{2p}+\sum_{0<j< 2p}C^j_{2p}\cdot (\mathbb{E} M_{N,N}^{2p})^{j/2p} g(N+1)^{(2p-j)/2}.
\end{align*}

Suppose that $\mathbb{E}M_{N,N}^{2p}\le K g(N)^p$ for any $N \ge 1$. We will determine now a value of $K>0$. First \begin{align*}
    \mathbb{E}M_{2N,2N}^{2p}&\le 2K g(N)^p+\sum_{0<j<2p}C_{2p}^j K^{j/2p}g(N)^{j/2} g(N)^{p-j/2}\\
    &=Kg(N)^p \cdot \Big(2+\sum_{0<j<2p}C_{2p}^jK^{\frac{j}{2p}-1}\Big)\\
    &= Kg(2N)^p \cdot \frac{2+\sum_{0<j<2p}C_{2p}^jK^{\frac{j}{2p}-1}}{2^{2p-\beta}}.
\end{align*} 

Then

\begin{align*}
    \mathbb{E}M_{2N+1,2N+1}^{2p}
    &\le Kg(N+1)^p+Kg(N)^p+\sum_{0<j< 2p}C^j_{2p}\cdot K^{j/2p}g(N)^{j/2} g(N+1)^{(2p-j)/2}\\
    & \le 2Kg(N+1)^p+\sum_{0<j< 2p}C^j_{2p}\cdot K^{j/2p}g(N+1)^{j/2} g(N+1)^{(2p-j)/2}\\
    &\le 2Kg(N+1)^p+\sum_{0<j< 2p}C^j_{2p}\cdot K^{j/2p} g(N+1)^{p}\\
    &= Kg(2N)^p \cdot \Big(2+\sum_{0<j<2p}C_{2p}^jK^{\frac{j}{2p}-1}\Big) \cdot \frac{(N+1)^{2p-\beta}}{(2N)^{2p-\beta}}.
\end{align*}

Note that $2p-\beta >p\ge 1$. Therefore, $(2+\sum_{0<j<2p}C_{2p}^jK^{\frac{j}{2p}-1})/2^{2p-\beta}\le 1$, if $K=K_{p,\beta}$ is sufficiently large. Further, $\Big(2+\sum_{0<j<2p}C_{2p}^jK^{\frac{j}{2p}-1}\Big) \cdot \frac{(N+1)^{2p-\beta}}{(2N)^{2p-\beta}}\le 1$ if $K=K_{p,\beta}$, and for any sufficiently large positive integer $N \ge N_{p,\beta}$. Then $\mathbb{E}M_{N,N}^{2p}\precsim_{p,\beta} g(N)^p$ for any $N \ge N_{p,\beta}$, i.e., \begin{align*}
    \Big|\Big|\frac{\max_{n\le N}|\sum_{n\le i\le  N}\phi \circ f^i|}{N}\Big|\Big|^{2p}_{2p} \precsim_{p, \beta}||\phi||^p_{\infty} C_{\phi} N^{-\beta}
\end{align*}

If $N\le N_{p,\beta}$, then \begin{align*}
    \Big|\Big|\frac{\max_{n\le N}|\sum_{n\le i\le  N}\phi \circ f^i|}{N}\Big|\Big|^{2p}_{2p}\le ||\phi||^{2p}_{\infty} \le ||\phi||^{2p}_{\infty}N_{p,\beta}^{\beta} N^{-\beta}.
\end{align*}

Therefore, for any $N\ge 1$ 
\begin{align*}
    \Big|\Big|\frac{\max_{n\le N}|\sum_{n\le i\le  N}\phi \circ f^i|}{N}\Big|\Big|^{2p}_{2p} \precsim_{p,\beta} \max\{||\phi||^{p}_{\infty}, C_{\phi}\}\cdot  ||\phi||^{p}_{\infty}N^{-\beta},
\end{align*}which completes the proof.
\end{proof}

\begin{proof}[We turn now to a proof of Theorem \ref{mld}]
Our proof in many respects goes along the lines of \cite{Sutams}. Let $M_N:=\sup_{n \ge N}\big|\frac{\sum_{i\le n}\phi \circ f^i}{n}\big|$ for any $N\ge 1$. If $n\ge 2N$, then clearly $\big|\frac{\sum_{i\le n}\phi \circ f^i}{n}\big| \le M_{2N}$. Now, if $n\in [N,2N]$, then 
\begin{align*}
    \Big|\frac{\sum_{i\le n}\phi \circ f^i}{n}\Big|&=\Big|\frac{\sum_{i\le 3N}\phi \circ f^i}{n}-\frac{\sum_{n\le i\le 3N}\phi \circ f^i}{n}\Big|\\
    &\le \frac{3N}{n}\Big|\frac{\sum_{i\le 3N}\phi \circ f^i}{3N}\Big|+\frac{3N-n}{n}\Big|\frac{\sum_{n\le i\le 3N}\phi \circ f^i}{3N-n}\Big|\\
    &\le 3\Big|\frac{\sum_{i\le 3N}\phi \circ f^i}{3N}\Big|+\frac{3N-n}{n}\frac{\sup_{N\le n\le 2N}|\sum_{n\le i\le 3N}\phi \circ f^i|}{N}\\
    &\le 3\Big|\frac{\sum_{i\le 3N}\phi \circ f^i}{3N}\Big|+4\frac{\sup_{N\le n\le 3N}|\sum_{n\le i\le 3N}\phi \circ f^i|}{2N}.
\end{align*}

Therefore, for any $n \ge N$  \begin{equation*}
    \Big|\frac{\sum_{i\le n}\phi \circ f^i}{n}\Big| \le M_{2N}+ 3\Big|\frac{\sum_{i\le 3N}\phi \circ f^i}{3N}\Big|+4\frac{\sup_{N\le n\le 3N}|\sum_{n\le i\le 3N}\phi \circ f^i|}{2N},
\end{equation*} i.e., \begin{equation*}
    M_N\le M_{2N}+ 3\Big|\frac{\sum_{i\le 3N}\phi \circ f^i}{3N}\Big|+4\frac{\sup_{N\le n\le 3N}|\sum_{n\le i\le 3N}\phi \circ f^i|}{2N}.
\end{equation*} Moreover, \begin{equation*}
    N^{\beta}M_N\le 2^{-\beta}(2N)^{\beta}M_{2N}+ N^{\beta}\Big[3\Big|\frac{\sum_{i\le 3N}\phi \circ f^i}{3N}\Big|+4\frac{\sup_{N\le n\le 3N}|\sum_{n\le i\le 3N}\phi \circ f^i|}{2N}\Big].
\end{equation*}

Let $C_N:=3\Big|\frac{\sum_{i\le 3N}\phi \circ f^i}{3N}\Big|+4\frac{\sup_{N\le n\le 2N}|\sum_{n\le i\le 3N}\phi \circ f^i|}{2N}, c_N:=N^{\beta}C_N$, $b_N:= N^{\beta}M_N$. Then\begin{align*}
    b_N \le c_N+ 2^{-\beta}b_{2N}\le c_N+2^{-\beta} (c_{2N}+2^{-\beta}b_{2^2N})=c_N+2^{-\beta}c_{2N}+2^{-2\beta}b_{2^2N}\le \cdots.
\end{align*}

Inductively we have for any $n,N\ge 1$ that \begin{equation*}
    b_N \le \sum_{i \le n-1}2^{-i\beta}c_{2^iN}+2^{-n\beta}b_{2^nN}= \sum_{i \le n-1}2^{-i\beta}c_{2^iN}+N^{\beta}M_{2^nN}.
\end{equation*}

By Birkhoff’s ergodic theorem, $\lim_{n\to 0}M_{2^nN}=0$ almost surely. Let now $n\to \infty$. Then  for any $N\ge 1$ we have  \begin{equation*}
    b_N \le  \sum_{i\ge 0}2^{-i\beta}c_{2^iN} \text{ almost surely, and } 
    ||b_N||_{2p} \le  \sum_{i\ge 0}2^{-i\beta}||c_{2^iN}||_{2p},
\end{equation*}

Next we will estimate $||c_{2^iN}||_{2p}$ and show that it is uniformly bounded. By Lemmas \ref{ld} and \ref{max}, we have \begin{align*}
    ||C_N||_{2p}&\le 3\Big|\Big|\frac{\sum_{i\le 3N}\phi \circ f^i}{3N}\Big|\Big|_{2p}+4 \Big|\Big|\frac{\sup_{N\le n\le 3N}|\sum_{n\le i\le 3N}\phi \circ f^i|}{2N}\Big|\Big|_{2p}\\
    &= 3\Big|\Big|\frac{\sum_{i\le 3N}\phi \circ f^i}{3N}\Big|\Big|_{2p}+4 \Big|\Big|\frac{\sup_{ n\le 2N}|\sum_{n\le i\le 2N}\phi \circ f^i|}{2N}\Big|\Big|_{2p}\\
    &\precsim_{p, \beta}||\phi||^{1/2}_{\infty} C_{\phi}^{1/(2p)}N^{-\beta/(2p)}+\max\{||\phi||^{p}_{\infty}, C_{\phi}\}^{1/2p}\cdot  ||\phi||^{1/2}_{\infty}N^{-\beta/(2p)}\\
    &\precsim_{p, \beta}\max\{||\phi||^{p}_{\infty}, C_{\phi}\}^{1/2p}\cdot  ||\phi||^{1/2}_{\infty}N^{-\beta/(2p)}.
\end{align*} 

This relation implies that $||c_N||_{2p}\precsim_{p, \beta}\max\{||\phi||^{p}_{\infty}, C_{\phi}\}^{1/2p}\cdot  ||\phi||^{1/2}_{\infty}N^{\beta-\beta/(2p)}$, and \begin{align*}
    ||b_N||_{2p} &\precsim_{p, \beta}\max\{||\phi||^{p}_{\infty}, C_{\phi}\}^{1/2p}\cdot  ||\phi||^{1/2}_{\infty} \sum_{i\ge 0}2^{-i\beta} (2^iN)^{\beta-\beta/(2p)}\\
    &\precsim_{p, \beta}\max\{||\phi||^{p}_{\infty}, C_{\phi}\}^{1/2p}\cdot  ||\phi||^{1/2}_{\infty}  N^{\beta-\beta/(2p)}.
\end{align*}

Therefore, $||M_N||_{2p}=N^{-\beta}||b_N||_{2p}\precsim_{p, \beta}\max\{||\phi||^{p}_{\infty}, C_{\phi}\}^{1/2p}\cdot  ||\phi||^{1/2}_{\infty}  N^{-\beta/(2p)}$, and \begin{align*}
    \mu_{\mathcal{M}}\{M_N\ge \epsilon\}\precsim_{\epsilon, p, \beta}\max\{||\phi||^{p}_{\infty}, C_{\phi}\}\cdot  ||\phi||^{p}_{\infty}  N^{-\beta},
\end{align*}which completes a proof of Theorem \ref{mld}.
\end{proof}

\begin{remark}\label{compare}
The result of Lemma \ref{ld} is obtained for $\phi\in L^{\infty}$. It will be used later to deduce an estimate of large deviations from a rate of mixing. It was proved in \cite{anothermld1, anothermld2} that large deviations and maximal large deviations have similar decay rates for $L^2$-observables $\phi$. Unlike to \cite{anothermld1, anothermld2}, one can see from the proof of Theorem \ref{mld} and Lemma \ref{max} that the estimates for $\phi \in L^{\infty}$ can be used for $\phi \in L^{2p}$, and that large deviations and maximal large deviations share exactly the same decay rates for $L^{2p}$-observables $\phi$. Instead of explicitly writing down this optimal result here, we consider in Theorem \ref{mld} only the case when $\phi \in L^{\infty}$, since it is technically easier to derive mixing rates for bounded observables, and such observables are a natural choice in applications, which especially plays a crucial role in billiards considered in the section \ref{billiardsection}. \end{remark}

\section{Gibbs-Markov-Young Structures for Some Weakly Chaotic Dynamical Systems}\label{existencesection}

We start with the definition of the Gibbs-Markov-Young  structures.  Then we apply Theorem \ref{mld} to the problem of existence of Gibbs-Markov-Young structures, which was partially addressed in \cite{vaientiadv}.

Let $f : \mathcal{M} \to \mathcal{M}$ be a piecewise $C^{1+}$-endomorphism of a Riemannian manifold $\mathcal{M}$. Denote by $d$ the Riemannian distance in $\mathcal{M}$, and by $\Leb$ a normalised volume form on $\mathcal{M}$. By $C^{1+}$ we denote the class of continuously differentiable maps with 
H\"older continuous derivatives. 

The definition of a  Gibbs-Markov-Young structure below, can also be found in the section 1.1 of \cite{Young2} or Definition 1.1 of \cite{vaientiadv}.
\begin{definition}[Gibbs-Markov-Young (GMY) structures]\label{GMY}\ \par 

Suppose that there exists a ball $\Delta_0\subset \mathcal{M}$,
a countable partition $\mathcal{P}$ (mod $0$) of $\Delta_0$ into topological balls $U$ with smooth boundaries, and a return time function $R: \Delta_0 \to \mathbb{N}$, which is constant on each element of $\mathcal{P}$. We say that $f$ admits a Gibbs–Markov-Young (GMY) structure if the following properties hold:
\begin{enumerate}
    \item{Markov property:} for each $U\in \mathcal{P}$ and $R=R(U)$, $f^R: U\to \Delta_0$ is a $C^{1+}$-diffeomorphism (and, in particular, it is a bijection). Thus the induced map $F: \Delta_0 \to \Delta_0$, $F(x)=f^{R(x)}(x)$, is defined almost everywhere and satisfies a classical Markov property.
\item{Uniform expansion:} there exists $\rho\in (0,1)$ such that for almost all $x \in \Delta_0$ we have $||DF(x)^{-1}||\le \rho$. In particular the separation time $s(x,y)$, which is equal to the maximum integer such that $F^i
(x)$ and $F^i
(y)$ belong to the same element of the partition $\mathcal{P}$ for all $i \le  s(x,y)$, is defined and is a finite for almost all $x, y \in \Delta_0$.
\item{Bounded distortion:} there exists $C > 0$ such that for any two points $x,y \in \Delta_0$ with $s(x,y) < \infty $ \begin{equation*}
    \Big|\frac{\det DF(x)}{\det DF(y)}-1\Big|\le C  \rho^{s(F(x),F(y))},
\end{equation*}where $\det DF$ is the Radon-Nikodym derivative of $F$ with respect to $\Leb$ on $\Delta_0$.
\end{enumerate}
\end{definition}

Theorem \ref{mld} implies the following corollary. 
\begin{corollary}\label{optimalmld}If $f$ admits  a Gibbs–Markov-Young structure and $\Leb(x\in \Delta_0: R(x)>N)\approx N^{1+\beta}$, then there exists an open and dense set $H_0$ in the space of H\"older functions, such that for any zero mean function $\phi\in H_0$, and for any $\epsilon>0$,
\begin{equation*}
    \lim_{N\to \infty}\frac{\log \mu_{\mathcal{M}}\Big(\sup_{n \ge N}\Big| \frac{\sum_{i \le n}\phi \circ f^i}{n}\Big|\ge \epsilon\Big)}{\log N}=-\beta .
\end{equation*}
\end{corollary}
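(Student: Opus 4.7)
The plan is to establish matching upper and lower bounds, both of the polynomial order $N^{-\beta}$, for the maximal large deviation probability, on an open dense set $H_0$ of Hölder observables of zero mean. The upper bound follows quickly from Theorem \ref{mld}; the lower bound, which pins down the exact exponent in the limit, is the main work.

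\textbf{Upper bound.} First I would invoke Young's mixing theorem for GMY structures: the tail condition $\Leb(R>N)\approx N^{-(1+\beta)}$ implies that for every Hölder $\phi$ with $\int\phi\,d\mu_{\mathcal{M}}=0$ one has $\|P^n\phi\|_1\precsim_\phi n^{-\beta}$. Since $\phi$ is bounded, the ``in particular'' part of Theorem \ref{mld} (applied with exponent $1+\lceil\beta\rceil$) yields
\[
\mu_{\mathcal{M}}\Big(\sup_{n\ge N}\Big|\frac{\sum_{i\le n}\phi\circ f^i}{n}\Big|\ge\epsilon\Big)\precsim_{\epsilon,\beta,\phi}N^{-\beta},
\]
which already gives $\limsup_N\log(\cdot)/\log N\le-\beta$ for every Hölder $\phi$.

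\textbf{Lower bound and construction of $H_0$.} Here I would exploit the tower representation associated to the GMY structure. Let $\Delta=\{(x,i):x\in\Delta_0,\,0\le i<R(x)\}$ with the tower map and its semiconjugacy $\pi\colon\Delta\to\mathcal{M}$. The ``slow'' set $B_N:=\pi(\{(x,i)\in\Delta:R(x)>N,\,i\le N/2\})$ has $\mu_{\mathcal{M}}$-measure $\gtrsim N^{-\beta}$. On $B_N$ the orbit of a point spends at least $N/2$ consecutive iterates inside the tower columns where the return time is at least $N$, i.e.\ in a prescribed ``waiting region'' $W\subset\mathcal{M}$ (the analogue of the neighbourhood of the neutral fixed point for intermittent maps, or of the cells producing long trajectories in billiards). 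Because $\phi$ is Hölder and $W$ accumulates on a lower-dimensional invariant set, I would show that the time average $\tfrac{1}{n}\sum_{i\le n}\phi\circ f^i$ on $B_N$ is, up to $o(1)$, a deterministic functional $\phi^{\ast}$ of $\phi$ obtained by averaging $\phi$ along the limiting excursion. The functional $\phi\mapsto\phi^{\ast}-\int\phi\,d\mu_{\mathcal{M}}$ is continuous and linear in the Hölder norm, and non-trivial (it vanishes only on a closed subspace of infinite codimension), so
\[
H_0:=\{\phi\in C^{\alpha}:\int\phi\,d\mu_{\mathcal{M}}=0,\ |\phi^{\ast}|>2\epsilon\}
\]
is open and dense in the zero-mean Hölder functions. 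For $\phi\in H_0$ one obtains $\mu_{\mathcal{M}}(\sup_{n\ge N}|S_n\phi/n|\ge\epsilon)\ge\mu_{\mathcal{M}}(B_N)\gtrsim N^{-\beta}$, giving $\liminf_N\log(\cdot)/\log N\ge-\beta$. Together with the upper bound, this completes the proof.

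\textbf{Main obstacle.} The delicate point is identifying the limiting functional $\phi^{\ast}$ and showing it depends continuously on $\phi$ in the Hölder topology. Concretely this requires proving that orbits in $B_N$ stay, with the correct uniformity, inside an asymptotic locus $W_\infty$ where $\phi$ can be replaced by a Hölder-continuous average; the Hölder regularity is essential, since it controls oscillations of $\phi$ inside the shrinking nested neighbourhoods of $W_\infty$ associated to longer and longer excursions. Once continuity and non-triviality of $\phi^{\ast}$ are established, the openness and density of $H_0$ follow from a standard perturbation argument using bump functions supported near $W_\infty$.
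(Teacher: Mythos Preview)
Your upper bound is correct and identical to the paper's. For the lower bound, however, the paper takes a much shorter route than you do: it simply uses the trivial pointwise inequality
\[
\mu_{\mathcal{M}}\Big(\sup_{n\ge N}\Big|\tfrac{S_n\phi}{n}\Big|\ge\epsilon\Big)\;\ge\;\mu_{\mathcal{M}}\Big(\Big|\tfrac{S_N\phi}{N}\Big|\ge\epsilon\Big),
\]
and then invokes Melbourne's \emph{existing} lower bound $N^{-\beta}\precsim\mu_{\mathcal{M}}(|S_N\phi/N|\ge\epsilon)$ for the ordinary (non-maximal) large deviation probability under the tail assumption $\Leb(R>N)\approx N^{-(1+\beta)}$; the open dense set $H_0$ is inherited directly from that reference. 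In other words, the entire ``Lower bound and construction of $H_0$'' block in your proposal is already a theorem in the literature, and the paper simply cites it.

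What you are sketching is essentially a reconstruction of Melbourne's argument (long excursions in the tower force the Birkhoff average toward a limiting functional $\phi^{\ast}$). The outline is sound, but two points deserve comment. First, your $H_0$ is written as $\{\phi:|\phi^{\ast}|>2\epsilon\}$, which depends on $\epsilon$; the corollary requires a single $H_0$ valid for all (small) $\epsilon$, so one should take $H_0=\{\phi:\phi^{\ast}\neq 0\}$ and then the lower bound holds for every $\epsilon<|\phi^{\ast}|/2$. Second, the step you flag as the ``main obstacle'' --- that the ergodic average along a long excursion is, up to $o(1)$, a continuous nontrivial functional of $\phi$ --- is exactly the substantive content of the cited result, and you do not actually carry it out. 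So your proposal is not wrong, but it replaces a one-line citation by a nontrivial argument that you leave incomplete; the paper's approach is both shorter and avoids this gap entirely.
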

\begin{proof}
The following lower bound estimate was proved in \cite{melbourne09}. \begin{align*}
    N^{-\beta}\precsim \mu_{\mathcal{M}}\Big(\Big| \frac{\sum_{i \le N}\phi \circ f^i}{N}\Big|\ge \epsilon\Big) \precsim \mu_{\mathcal{M}}\Big(\sup_{n \ge N}\Big| \frac{\sum_{i \le n}\phi \circ f^i}{n}\Big|\ge \epsilon\Big).
\end{align*}

Together with the upper bound from Theorem \ref{mld} it concludes the proof.
\end{proof}

\begin{remark}
Note that the Corollary 1 provides a stronger statement than the Theorem 1.2 in \cite{melbourne09}. 
\end{remark}

In what follows we will consider GMY structures for an $C^{1+}$-endomorphism $f : \mathcal{M} \to \mathcal{M}$. Let $\mathcal{H}_{\alpha}$ be the space of H\"older continuous functions on $\mathcal{M}$ with a H\"older constant $\alpha>0$. Recall that the H\"older norm of a function $\phi$ is given by $||\phi||_{\mathcal{H}_{\alpha}}:=||\phi||_{\infty}+\sup_{x\neq y}\frac{|\phi(x)-\phi(y)|}{d(x,y)^{\alpha}}< \infty$. 

\begin{definition}
We say that a measure $\mu_{\mathcal{M}}$ on $\mathcal{M}$ is expanding if $\int \log||Df^{-1}|| d\mu_{\mathcal{M}} \in (-\infty, 0)$.
\end{definition}
\subsection{$C^{1+}$-local diffeomorphism}

Let $f : \mathcal{M} \to \mathcal{M}$ be a $C^{1+}$-local diffeomorphism, i.e., $Df(x)^{-1}, Df(x)$ are well-defined, uniformly bounded for all $x\in \mathcal{M}$, and $\log ||\big[Df(\cdot)\big]^{-1}||$ is H\"older continuous.

\begin{theorem}[Existence of GMY \RNum{1}]\label{existgmy1} \par
Suppose that $f : \mathcal{M} \to \mathcal{M}$ is a $C^{1+}$-local diffeomorphism, and  $f$
admits an ergodic expanding absolutely continuous (w.r.t. $\Leb$) invariant probability measure $\mu_{\mathcal{M}}$. Further, we assume that there is $\beta>0$ such that for any zero mean function $\phi \in \mathcal{H}_{\alpha}$ and for any $n\ge 1$, $\int |P^n(\phi)|d\mu_{\mathcal{M}}\le C_{\phi}n^{-\beta}$ for some constant $C_{\phi}>0$. Then  $f$ admits a Gibbs-Markov-Young structure.
\end{theorem}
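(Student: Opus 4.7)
The plan is to reduce the existence of the GMY structure to the maximal large deviations theorem (Theorem \ref{mld}) applied to the log-derivative cocycle, and then to invoke the hyperbolic-times machinery used in \cite{vaientiadv}. The conceptual point is that the rate at which Birkhoff averages of $\log\|Df^{-1}\|$ approach their (negative) mean controls the distribution of the first hyperbolic time, and hence the tail of the return time $R$.

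First I would set $\psi := \log\|Df(\cdot)^{-1}\|$, $\lambda := -\int\psi\,d\mu_\mathcal{M} > 0$ (positive by the expanding hypothesis), and $\phi := \psi + \lambda$. Under the $C^{1+}$ assumption $\phi$ is bounded, H\"older, and of zero mean, so the standing hypothesis gives $\|P^n\phi\|_1 \precsim n^{-\beta}$. The last statement of Theorem \ref{mld} then yields
\[
\mu_\mathcal{M}\Bigl(\sup_{n\ge N}\Bigl|\tfrac{1}{n}\sum_{i\le n}\phi\circ f^i\Bigr|\ge \tfrac{\lambda}{2}\Bigr)\;\precsim\; N^{-\beta},
\]
and choosing the candidate ball $\Delta_0$ inside a region where the invariant density is bounded below (possible by the a.c.\ assumption and ergodicity) transfers this bound to $\Leb$.

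Next I would convert this into a quantitative statement about hyperbolic times: for $x$ outside the exceptional set, $\frac{1}{N}\sum_{i\le N}\psi\circ f^i(x)\le -\lambda/2$, so Pliss' lemma produces a $(\lambda/4)$-hyperbolic time for $x$ in the window $[1,CN]$, with $C$ depending only on $\lambda$ and $\|\psi\|_\infty$. Consequently the first hyperbolic time $h$ satisfies $\Leb(h>N)\precsim N^{-\beta}$. With this tail estimate in hand, I would follow the standard construction in \cite{vaientiadv}: at a hyperbolic time $n$ of $x$ the inverse branch of $f^n$ is defined on a ball of definite radius around $f^n(x)$, contracts uniformly, and has bounded distortion; organising these inverse branches inductively (accepting a branch when its image covers $\Delta_0$, and postponing otherwise) yields a Markov partition $\mathcal{P}$ of $\Delta_0$ whose return time $R$ inherits the polynomial tail of $h$. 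The uniform expansion and bounded distortion axioms of Definition \ref{GMY} then follow from the classical estimates on hyperbolic pre-balls.

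The main obstacle is this last orchestration step: one must show that the countable collection of hyperbolic inverse branches can be fitted together so that their images either cover $\Delta_0$ exactly or are postponed to a later iterate without destroying the polynomial tail. This combinatorial/geometric construction is already present in \cite{vaientiadv}; the novelty here is that the input $\Leb(h>N)\precsim N^{-\beta}$ supplied by Theorem \ref{mld} is strong enough to drive the construction for arbitrarily small $\beta>0$, whereas the cited work required a strictly faster decay. A secondary subtlety is the passage from $\mu_\mathcal{M}$ to $\Leb$, which relies on a positive lower bound for the invariant density near $\Delta_0$.
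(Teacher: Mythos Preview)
Your proposal is correct and follows essentially the same route as the paper: both arguments apply Theorem \ref{mld} to the zero-mean H\"older observable $\phi=\log\|Df^{-1}\|-\int\log\|Df^{-1}\|\,d\mu_{\mathcal{M}}$ to obtain $\mu_{\mathcal{M}}\bigl(\sup_{n\ge N}|\tfrac{1}{n}\sum_{i\le n}\phi\circ f^i|\ge\lambda/2\bigr)\precsim N^{-\beta}$, transfer this to $\Leb$ on a ball $\Delta_0$ where the density is bounded below, and then feed the resulting tail bound on the non-uniform expansion time $\mathcal{E}$ into the construction of Theorem~3.1 in \cite{vaientiadv}. The only cosmetic difference is that the paper packages the last step as a direct citation of \cite{vaientiadv} (phrased in terms of $\mathcal{E}$), whereas you unpack the Pliss/hyperbolic-times mechanism that lives inside that reference; the content is the same.
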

\begin{remark}
Theorem 2 improves Theorem A in \cite{vaientiadv} by allowing $\beta$ to be just any positive number. This means that the dynamics has a GMY structure if the correlations decay polynomially or superpolynomially).
\end{remark}
\begin{proof}[Proof of Theorem \ref{existgmy1}]
We start with proving the first statement. Consider a zero mean H\"older function  $\phi(x):=\log||Df(x)^{-1}||-\int \log||Df(x)^{-1}|| d\mu_{\mathcal{M}}$. Then $\int |P^n(\phi)|d\mu_{\mathcal{M}}\le C_{\phi}n^{-\beta}$ for some $\beta>0$ and $C_{\phi}>0$. Let \[\mathcal{E}:=\inf\Big\{N: \frac{\sum_{i\le n}\phi \circ f^i}{n}\le -\frac{\int \log ||Df(x)^{-1}||d\mu_{\mathcal{M}}}{2}  \text{ for all }n \ge N\Big\},\]
\[\mathcal{N}:=\inf\Big\{N: \Big|\frac{\sum_{i\le n}\phi \circ f^i}{n}\Big| \le -\frac{\int \log ||Df(x)^{-1}||d\mu_{\mathcal{M}}}{2} \text{ for all }n \ge N \Big\},\]

Clearly $\{\mathcal{E}>N\}\subseteq  \{\mathcal{N}>N\} \subseteq \Big\{\sup_{n\ge N}\Big|\frac{\sum_{i\le n}\phi \circ f^i}{n}\Big|  \ge -\frac{\int \log ||Df(x)^{-1}||d\mu_{\mathcal{M}}}{2} \Big\}$. Following the argument in the proof of Theorem 3.1 of \cite{vaientiadv}, we obtain that there is a ball $\Delta_0$ such that \[\Leb(\mathcal{E}>N\bigcap \Delta_0)\precsim \mu_{\mathcal{M}}(\mathcal{E}>N \bigcap \Delta_0) \precsim N^{-\beta},\] where the last ``$\precsim$" is due to Theorem \ref{mld}. It follows from the proof of Theorem 3.1 in \cite{vaientiadv} that a Gibbs-Markov-Young structure with a return time $R$ exists, and that $\Leb\{R> N\}\precsim N^{-\beta}$.
\end{proof}

\subsection{$C^{1+}$-local diffeomorphisms outside critical/singular sets}

\begin{definition}
We say that $x$ is a critical point if $Df(x)$ is not invertible, and $x$ is a singular point if $Df(x)$ does not exist. Let $\mathcal{C}$ denotes the set of critical and singular points, and $d(x, \mathcal{C})$ is
the distance between a point $x \in \mathcal{M}$ and the set $\mathcal{C}$. We say that the set $\mathcal{C}$ of critical/singular points is nondegenerate if there are constants $B,d, \eta >0$, such that for all $\epsilon>0$ the following four conditions hold.
\begin{enumerate}
    \item $\Leb(\{x: d(x, \mathcal{C})\le \epsilon\})\le B\epsilon^d$ (in particular $\Leb(\mathcal{C}) = 0$);
    \item $B^{-1}d(x, \mathcal{C})^{\eta}\le ||Df(x)v||\le B d(x, \mathcal{C})^{-\eta}$ for every $x\in \mathcal{M}\setminus \mathcal{C}$ and $v\in \mathcal{T}_x\mathcal{M}$ with $||v||=1$;
    \item $|\log ||Df (x)^{-1}||- \log ||Df (y)^{-1}||| \le   B|\log(d(y, \mathcal{C}))-\log(d(x, \mathcal{C}))|$ for all $x,y \in \mathcal{M}\setminus \mathcal{C}$; 
    \item $|\log\det|Df (x)|- \log\det |Df (y)|| \le   B|\log d(y, \mathcal{C})-\log d(x, \mathcal{C})|$ for all $x,y \in \mathcal{M}\setminus \mathcal{C}$.
\end{enumerate}
\end{definition}
\begin{theorem}[Existence of GMY \RNum{2}]\label{existgym2} \par
Suppose that $f : \mathcal{M} \to  \mathcal{M}$ is a $C^{1+}$-local diffeomorphism outside a nondegenerate critical
set $\mathcal{C}$, and $f$
admits an ergodic expanding absolutely continuous (w.r.t. $\Leb$) invariant probability measure $\mu_{\mathcal{M}}$ with $\frac{d\mu_{\mathcal{M}}}{d\Leb}\in L^{1+\delta}$ for some $\delta>0$. Let also there is such $\beta>0$ that for any zero mean $\phi \in \mathcal{H}_{\alpha}$, $\int |P^n(\phi)|d\mu_{\mathcal{M}}\le C_{\phi}n^{-\beta}$ for some constant $C_{\phi}>0$ and any $n\ge 1$. Then  $f$ admits a Gibbs-Markov-Young structure (see Definition \ref{GMY}).
\end{theorem}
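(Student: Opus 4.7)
The plan is to follow the scheme used for Theorem \ref{existgmy1}, the new obstacle being that the natural observables $\phi_1 := \log\|Df(\cdot)^{-1}\|$ and $\phi_2 := -\log d(\cdot, \mathcal{C})$ are unbounded near $\mathcal{C}$, so Theorem \ref{mld} cannot be applied to them directly. Condition~2 on the nondegenerate critical set gives $|\phi_1| \le \log B + \eta\, \phi_2$, so it suffices to control $\phi_2$ and then transfer the resulting estimate to $\phi_1$ using conditions~3 and~4.

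First I would show that $\phi_2 \in L^q(\mu_{\mathcal{M}})$ for every $q \ge 1$. The tail bound from condition~1, $\Leb\{\phi_2 > t\} = \Leb\{d(\cdot, \mathcal{C}) < e^{-t}\} \le B e^{-td}$, places $\phi_2$ in $L^q(\Leb)$ for all $q$; H\"older's inequality combined with $d\mu_{\mathcal{M}}/d\Leb \in L^{1+\delta}$ then transfers this to $L^q(\mu_{\mathcal{M}})$. Next I would introduce the truncations $\phi_j^{(R)} := \min(\phi_j, R)$ (two-sided in the case of $\phi_1$). These are bounded by $R$ and, via conditions~3 and~4 applied on the region $\{d(\cdot, \mathcal{C}) \ge c\, e^{-R/\eta}\}$ where the truncation is non-trivial, H\"older continuous with $\|\phi_j^{(R)}\|_{\mathcal{H}_{\alpha}} \precsim e^{R/\eta}$.

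Writing $\phi_j = \phi_j^{(R)} + (\phi_j - \phi_j^{(R)})$, I would then treat the two pieces separately. To the centred bounded part $\phi_j^{(R)} - \int \phi_j^{(R)} d\mu_{\mathcal{M}}$ the mixing hypothesis of the theorem applies, and Theorem \ref{mld} yields a maximal deviation bound of the form $\mathrm{poly}(R)\cdot N^{-\beta}$. For the non-negative remainder I would invoke the maximal ergodic inequality
\begin{equation*}
\mu_{\mathcal{M}}\Big(\sup_{n \ge 1}\frac{1}{n}\sum_{i < n}(\phi_j - \phi_j^{(R)}) \circ f^i > \epsilon\Big) \le \epsilon^{-1}\int (\phi_j - \phi_j^{(R)})\, d\mu_{\mathcal{M}} \precsim_{q} R^{1-q},
\end{equation*}
the last step using the $L^q(\mu_{\mathcal{M}})$ control from the first step. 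Choosing $R = R_N$ to grow like a sufficiently small multiple of $\log N$ (and $q$ large) makes both contributions $\precsim N^{-\beta + \gamma}$ for any preassigned $\gamma > 0$.

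Finally, as in the proof of Theorem \ref{existgmy1}, I would localise to a ball $\Delta_0 \subset \mathcal{M}$, convert the $\mu_{\mathcal{M}}$-measure bound to a Lebesgue-measure bound through H\"older's inequality (again exploiting $d\mu_{\mathcal{M}}/d\Leb \in L^{1+\delta}$), and obtain polynomial tails for the expansion time $\mathcal{E}$ and the analogous slow-recurrence time. Plugging these tails into the construction of \cite{vaientiadv}, now applied in its critical-set version, delivers the Gibbs-Markov-Young structure. I expect the main obstacle to be the exponential dependence of the H\"older constant of the truncation on the level $R$: this forces $R = O(\log N)$ and a corresponding loss of an arbitrarily small amount $\gamma$ in the tail exponent, which is nonetheless acceptable since Theorem \ref{existgym2} asserts only the existence of a GMY structure and does not claim a sharp return-time tail.
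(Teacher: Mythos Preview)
Your proposal is correct and follows essentially the same approach as the paper: truncate the unbounded observables $\phi_1=\log\|Df^{-1}\|$ and $-\log d(\cdot,\mathcal{C})$, apply Theorem~\ref{mld} to the bounded truncation (with $C_\phi$ controlled via the H\"older norm), use the maximal ergodic theorem on the remainder, balance by taking the truncation level of order $\log N$, and feed the resulting $N^{-\beta+\gamma}$ tails into the critical-set construction of \cite{vaientiadv}. The only cosmetic discrepancy is that your intermediate phrase ``$\mathrm{poly}(R)\cdot N^{-\beta}$'' understates the dependence on $R$ (it is exponential through $C_\phi\precsim\|\phi_j^{(R)}\|_{\mathcal{H}_\alpha}$, as you yourself note at the end), but your choice $R=O(\log N)$ accounts for this correctly and matches the paper's conclusion $\Leb\{R>N\}\precsim N^{-\beta+\delta}$.
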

\begin{remark}
Theorem 3 proves the result of Theorem C in \cite{vaientiadv} for any positive $\beta$. Therefore the dynamics considered in this theorem has a GMY structure for all cases of  polynomial or superpolynomial decay of correlations. 
\end{remark}
\begin{proof}[Proof of Theorem \ref{existgym2}] Fix $\epsilon>0$. Define $\phi_1:=\log ||Df^{-1}||$, $\phi_3(x):=-\log d_{\delta}(x,\mathcal{C})$, \begin{eqnarray*}
 \phi_2(x):=
\begin{cases}
-\log d(x, \mathcal{C}),      & d(x, \mathcal{C})< \delta,\\
\frac{\log \delta}{\delta}(d(x, \mathcal{C})-2\delta),  &\delta \le d(x, \mathcal{C})< 2\delta, \\
0,  & d(x,\mathcal{C})\ge 2\delta, \\
\end{cases} \quad  d_{\delta}(x, \mathcal{C}):=\begin{cases}d(x, \mathcal{C}), & d(x, \mathcal{C})< \delta,\\1, & d(x, \mathcal{C})\ge \delta,\\
\end{cases}
\end{eqnarray*}where $\delta>0$ is so small that \[\lim_{n \to \infty}\frac{\sum_{i \le n}\phi_3 \circ f^i}{n}=\int \phi_3 d\mu_{\mathcal{M}}\le \lim_{n \to \infty}\frac{\sum_{i \le n}\phi_2 \circ f^i}{n}=\int \phi_2 d\mu_{\mathcal{M}}\le \epsilon\] (see  (4.2) in \cite{vaientiadv}). Let \begin{gather*}
    \mathcal{E}_1:=\inf\Big\{N: \frac{\sum_{i\le n}\phi_1 \circ f^i}{n}\le -\frac{\int \phi_1d\mu_{\mathcal{M}}}{2}  \text{ for all }n \ge N\Big\},\\
    \mathcal{N}_1:=\inf\Big\{N: \Big|\frac{\sum_{i\le n}\phi_1 \circ f^i}{n}-\int \phi_1 d\mu_{\mathcal{M}}\Big| \le -\frac{\int \phi_1d\mu_{\mathcal{M}}}{2} \text{ for all }n \ge N \Big\},
\end{gather*}
\begin{gather*}
    \mathcal{E}_2:=\inf\Big\{N: \frac{\sum_{i\le n}\phi_3 \circ f^i}{n}\le 2\epsilon \text{ for all }n \ge N\Big\},\\
    \mathcal{N}_2:=\inf\Big\{N: \Big|\frac{\sum_{i\le n}\phi_3 \circ f^i}{n}-\int \phi_3 d\mu_{\mathcal{M}}\Big| \le \epsilon \text{ for all }n \ge N \Big\}.
\end{gather*}

     Clearly\begin{gather*}
    \{\mathcal{E}_1>N\}\subseteq \{\mathcal{N}_1> N\}\subseteq \Big\{\sup_{n\ge N}\Big|\frac{\sum_{i\le n}\phi_1 \circ f^i}{n}-\int \phi_1 d\mu_{\mathcal{M}}\Big| \ge -\frac{\int \phi_1d\mu_{\mathcal{M}}}{2}\Big\},\\
       \{\mathcal{E}_2>N\}\subseteq \{\mathcal{N}_2> N\}\subseteq \Big\{\sup_{n\ge N}\Big|\frac{\sum_{i\le n}\phi_3 \circ f^i}{n}-\int \phi_3 d\mu_{\mathcal{M}}\Big| \ge \epsilon\Big\}.
\end{gather*} 

It follows from the proof of Theorem 4.2 in \cite{vaientiadv} that GMY structure with $\Leb(R>N)\precsim N^{-a}$ exists, if $ \mu_{\mathcal{M}}(\{\mathcal{E}_2>N\}\bigcup  \{\mathcal{E}_2> N\})$ is of order $O(N^{-a})$ for some $a>0$. Therefore we just need to find decay rates for $\mu_{\mathcal{M}}\big\{\sup_{n\ge N}\big|\frac{\sum_{i\le n}\phi_3 \circ f^i}{n}-\int \phi_3 d\mu_{\mathcal{M}}\big| \ge \epsilon\big\}$ and $\mu_{\mathcal{M}}\big\{\sup_{n\ge N}\big|\frac{\sum_{i\le n}\phi_1 \circ f^i}{n}-\int \phi_1 d\mu_{\mathcal{M}}\big| \ge -\frac{\int \phi_1d\mu_{\mathcal{M}}}{2}\big\}$. In order to do it we truncate the functions $\phi_1$ and $\phi_3$. Define \[\phi_{1,k}:=\phi_1\mathbbm{1}_{|\phi_1|\le k}+k\mathbbm{1}_{\phi_1>k}-k\mathbbm{1}_{\phi_1<-k}, \quad \phi_{3,k}:=\phi_3\mathbbm{1}_{|\phi_3|\le k}+k\mathbbm{1}_{\phi_3>k}-k\mathbbm{1}_{\phi_3<-k}.\]

Let $j=1,3$ and $u=\frac{-\int \phi_1 d\mu_{\mathcal{M}}}{2}, \epsilon$, respectively. By making use of the maximal ergodic theorem we obtain that
\begin{align*}
&\mu_{\mathcal{M}}\Big(\sup_{n \ge N}\Big|\frac{\sum_{i \le n}\big({\phi_j }-\mathbb{E} {\phi_j}\big) \circ f^i}{n}\Big|>u\Big)\\
    &\le \mu_{\mathcal{M}}\Big(\sup_{n \ge N}\Big|\frac{\sum_{i \le n}\big(\phi_{j,k}-\mathbb{E}\phi_{j,k}\big) \circ f^i}{n}\Big|> \frac{u}{2}\Big)\\
    &\quad +\mu_{\mathcal{M}}\Big(\sup_{n \ge N}\Big|\frac{\sum_{i \le n}\big[\phi_j-\phi_{j,k}-\mathbb{E}\big(\phi_j-\phi_{j,k}\big)\big] \circ f^i}{n}\Big|> \frac{u}{2}\Big)\\
      &\precsim_u \mu_{\mathcal{M}}\Big(\sup_{n \ge N}\Big|\frac{\sum_{i \le n}\big(\phi_{j,k}-\mathbb{E}\phi_{j,k}\big) \circ f^i}{n}\Big|>\frac{u}{2}\Big)+\mu_{\mathcal{M}}\Big(\big|\mathbb{E}(\phi_{j}-\phi_{j,k})\big|>\frac{u}{4}\Big)\\
      & \quad +\mu_{\mathcal{M}}\Big(\sup_{n \ge 1}\Big|\frac{\sum_{i \le n}\big(\phi_j-\phi_{j,k}\big) \circ f^i}{n}\Big|>\frac{u}{4}\Big)\\
      &\precsim_u \mu_{\mathcal{M}}\Big(\sup_{n \ge N}\Big|\frac{\sum_{i \le n}\big(\phi_{j,k}-\mathbb{E}\phi_{j,k}\big) \circ f^i}{n}\Big|>\frac{u}{2}\Big)+||\phi_{j}-\phi_{j,k}||_1
\end{align*}

By Lemma 4.3 of \cite{vaientiadv} we have for some $\xi>0$ that $||\phi_{j}-\phi_{j,k}||_1=\int_{t \ge k}\mu_{\mathcal{M}}\{|\phi_{j,k}|\ge t\} dt \precsim e^{-\xi k}$. Apply now the Theorem \ref{mld} and Lemma 5.1 of \cite{vaientiadv}. Here we choose $k$ following the argument on the page 1226 of this paper. Thus \[\mu_{\mathcal{M}}\Big(\sup_{n \ge N}\Big|\frac{\sum_{i \le n}\big(\phi_{j}-\mathbb{E}\phi_{j}\big) \circ f^i}{n}\Big|>u\Big)\precsim_{u,\delta} N^{-\beta+\delta} \text{ for a small }\delta>0.\]

Therefore, a GMY structure with $\Leb\{R>N\}\precsim_{\delta} N^{-\beta+\delta}$ exists for sufficiently small $\delta>0$.
\end{proof}

\section{Applications to Two-dimensional Billiards}\label{billiardsection}

In this section we study some two-dimensional billiards. First we introduce several basic notions and properties of billiards, and give a definition of Chernov-Markarian-Zhang structures (see Definition \ref{cmz}). Then we will apply Theorem \ref{mld} to prove analogous statement (Theorem \ref{fplthm}) for this structure. Finally, applications to some specific classes of billiards will be considered.

Let $Q$ be a bounded connected domain in $\mathbb{R}^2$ (sometimes called a billiard table). The boundary $\partial Q$ consists of finitely many (at least) $C^3$-smooth curves. Each smooth component of the boundary $\partial Q$ is either flat, or dispersing (convex inwards), or focusing (convex outwards of a billiard table). 

We assume that each focusing boundary component is an arc of a circle, but not a full circle. Moreover, the corresponding full circle does not intersect any other boundary component. In what follows we will call this condition the simplest focusing chaos or SFC-condition. These billiards belong to the class of hyperbolic dynamical systems. 

The phase space of a billiard map is $\mathcal{M}:= \partial Q \times S^1$. A billiard map $f: \mathcal{M}\to \mathcal{M}$ sends a configuration of unit vectors with footpoints on the boundary at reflection times to the
configuration just after the next moments of reflection off the boundary. Denote $x=(q, \phi) \in \mathcal{M}$, $\pi_{\partial Q}x=q$, $dx=(dq, d\phi)\in \mathcal{T}_x\mathcal{M}$. Let $\mathcal{K}=\mathcal{K}(q)$ be the curvature of the boundary at a point $q \in \partial Q$. 

We list now several basic properties and formulas  (see e.g. \cite{CMbook}) for the class of two-dimensional billiards under consideration. 

The billiard map $f$ preserves a probability measure \begin{equation}\label{SRB}
    d\mu_{\mathcal{M}}:=(2\Leb_{\partial Q} \partial Q)^{-1}\cos \phi d\phi dq,
\end{equation}    
which is the projection of the phase volume (for the billiard flow) to the boundary.  

There exist unstable and stable cones $C^u,C^s \subseteq \mathcal{T}(\mathcal{M})$ defined as follows. For any $x=(q, \phi)$\begin{gather*}
    C^u_x:=\{(dq,d\phi)\in \mathcal{T}_x\mathcal{M}: \mathcal{K}\le d\phi/dq \le \infty\}
\end{gather*}for dispersing and flat boundary components, and \begin{gather*}
    C^u_x:=\{(dq,d\phi)\in \mathcal{T}_x\mathcal{M}: \mathcal{K}\le d\phi/dq \le 0\}
\end{gather*}for focusing components, and
\begin{gather*}
    C^s_x:=\{(dq,d\phi)\in \mathcal{T}_x\mathcal{M}: -\infty \le d\phi/dq \le -\mathcal{K}\}
\end{gather*}for dispersing and flat boundary components and \begin{gather*}
    C^s_x:=\{(dq,d\phi)\in \mathcal{T}_x\mathcal{M}: 0\le d\phi/dq \le -\mathcal{K}\}
\end{gather*}for focusing components. 

Let  $C^u:=\bigcup_{x\in \mathcal{M}}C^u_x$,   $C^s:=\bigcup_{x\in \mathcal{M}}C^s_x$ in $\mathcal{T}\mathcal{M}$. Then for all $m\ge 1$, \[(Df)C^u \subseteq C^u, \quad (Df)^{-1}C^s \subseteq C^s,\]
\begin{align}\label{invariantcone}
    (Df)^m\mathcal{T}(\{q\}\times S^1) \subseteq C^u, \quad (Df)^{-m}\mathcal{T}(\{q\}\times S^1) \subseteq C^s.
\end{align}

Now we fix a reference subset $X\subseteq \mathcal{M}$ with $\mu_{\mathcal{M}}(X)>0$. The first return time to $X$ is $R: X\to \mathbb{N}$, and the first return map is $f^R: X \to X$. Suppose that $X$ can be partitioned into countably many connected pieces
\begin{equation}\label{parition}
    X=\bigcup_i X_i,
\end{equation} so that $R$ is constant on each $X_i$, and  \[\mu_{\mathcal{M}} (\partial X_i)=0,\quad \interior{X_i} \bigcap \interior{X_j}=\emptyset \text{ for } i \neq j.\]
\begin{definition}[Singularities in $\mathcal{M}$ and $X$]\label{sing} \par
 Denote by $\mathbb{S}\subseteq X$ a singularity set for $f^R$. We assume that $\mathbb{S}$ has zero Lebesgue measure, and that $\mathbb{S}^c \subseteq X$ consists of countably many open connected components.  Let the set $\mathbb{S}_0$ consists of the discontinuities and the points where the map $f^R$ is not differentiable. Singularities of billiards include orbits which hit singularities of the boundary of the billiard table and orbits tangent to dispersing components of the boundary. Then $ \mathbb{S}_0 \subseteq \mathbb{S}$.
 
 We extend the singularities in $X$ to the singularities $\bigcup_{i}\bigcup_{0\le k < R|_{X_i}} f^{k} (\mathbb{S} \bigcap X_i)$  in $\mathcal{M}$, and denote this set by $\pmb{\mathbb{S}}$.
 \end{definition}

 \begin{definition}[Unstable and stable manifolds/curves in $X$]\label{unstablecone} \par
 An unstable (resp. stable) manifold in $X$ is a connected component of $(\bigcup_{i \ge 0}(f^R)^{i}\mathbb{S})^c$ (resp. $(\bigcup_{i \ge 0}(f^R)^{-i}\mathbb{S})^c$. A closed and connected part of the unstable (resp. stable) manifold will be called an unstable (resp. stable) disk. All (un)stable manifolds/disks are one-dimensional.
 
 We denote each unstable (resp. stable) manifold/disk by $\gamma^u$ (resp. $\gamma^s$), and its tangent vectors by $v^u$ (resp. $v^s$).
 
 Unstable (resp. stable) curve by $ \pmb{{\gamma}^u}$ (resp. $ \pmb{{\gamma}^s}$) is a smooth curve, such that $\mathcal{T}\pmb{{\gamma}^u} \in C^u$ (resp. $ \mathcal{T}\pmb{{\gamma}^s} \in C^s$).
 
 Denote the length of (un)stable manifolds/curves $\gamma$  by $\diam \gamma$. 
\end{definition}
\begin{remark}
Note that (un)stable manifolds are also (un)stable curves, but not vice versa. Observe also that (un)stable cones in \cite{CMbook} are in $\mathcal{T}(X)$, while our $C^u, C^s$ are in $\mathcal{T}(\mathcal{M})$. So the (un)stable curves in \cite{CMbook} are different from ours $\pmb{\gamma^u}$ and $\pmb{\gamma^s}$. We will need $\pmb{\gamma^u}, C^u$ and $\pmb{\gamma^s}, C^s$ to address the questions considered in the present section.
\end{remark}

\begin{definition}[Chernov-Markarian-Zhang (CMZ) structures]\label{cmz}\ \par
We say that $(X, f^R)$ is a CMZ structure of a two-dimensional mixing billiard $(\mathcal{M}, f, \mu_{\mathcal{M}})$ if there are constants $C>0$ and $\beta \in (0,1)$, such that the following conditions hold
\begin{enumerate}
    \item Hyperbolicity. For any $n\in \mathbb{N}$, $v^u$ and $v^s$,\begin{align*}
        |D(f^R)^n v^u|\ge C\beta^{-n} |v^u|, \quad  |D(f^R)^{n} v^s|\le C\beta^n |v^s|,
    \end{align*}where $|\cdot|$ is the Riemannian metric induced from $\mathcal{M}$ to (un)stable manifolds.
    \item SRB measures and u-SRB measures. Dynamical system $(X, f^R,\mu_X) $ is mixing, where $\mu_X:=\frac{\mu_\mathcal{M}}{\mu_{\mathcal{M}}(X)}\big|_X$. The conditional distribution on each $\gamma^u$ (say $\mu_{\gamma^u}$) is absolutely continuous w.r.t. Lebesgue measure $\Leb_{\gamma^u}$ on $\gamma^u$.
    \item Distortion bounds. Let $d_{\gamma^u}(\cdot, \cdot)$ be the distance measured along $\gamma^u$. By $\det D^uf^R$ we denote the Jacobian of $Df^R$ along the unstable manifolds. Then, if $x, y\in X$ belong to a $\gamma^u$, and $(f^R)^n$ is smooth on $\gamma^u$, the following relation holds
    \begin{align*}
        \log \frac{\det D^u(f^R)^n(x)}{\det D^u(f^R)^n(y)}\le \psi\left[d_{\gamma^u}\Big((f^R)^nx,(f^R)^ny\Big)\right],
    \end{align*}where $\psi(\cdot)$ is some function, which does not depend on $\gamma^u$, and $\lim_{s\to 0^+}\psi(s)=0$.
    \item Bounded curvatures. The curvatures of all $\gamma^u$ are uniformly
bounded by some constant $C$.
\item Absolute continuity. Consider a holonomy map $h : \gamma^u_1 \to \gamma^u_2$,  which maps a point $x \in \gamma^u_1$ to a point $h(x)\in \gamma^u_2$, such that both $x$ and $h(x)$ belong to the same $\gamma^s$. We assume that the holonomy map satisfies the following relation \begin{align*}
    \frac{\det D^u(f^R)^n(x)}{\det D^u(f^R)^n\big(h(x)\big)}=C^{\pm 1} \text{ for all }n \ge 1 \text{ and }x\in \gamma^u_1,
\end{align*}
\item Growth lemmas. There exist $N \in \mathbb{N}$, a sufficiently small $\delta_0>0$, and
constants  $ \kappa, \sigma > 0$, which satisfy the following conditions. For any
sufficiently small $\delta>0$ and for any disk on a smooth stable manifold  $\gamma^u$ with $\diam \gamma^u \le \delta_0$, denote by $U_{\delta} \subseteq \gamma^u$ a $\delta$-neighborhood of the subset $\gamma^u \bigcap \bigcup_{0\le i \le N}(f^R)^{-i}\mathbb{S}$ within the set $\gamma^u$. Then there exists an open subset $V_{\delta} \subseteq \gamma^u \setminus U_{\delta}$, such that $\Leb_{\gamma^u}\big(\gamma^u \setminus (U_{\delta}\bigcup V_{\delta})\big)=0$, and for any $\epsilon >0$ \begin{gather*}
    \Leb_{\gamma^u}(r_{V_{\delta},N}<\epsilon)\le 2\epsilon \beta+\epsilon C \delta_0^{-1} \Leb_{\gamma^u} (\gamma^u),\\
    \Leb_{\gamma^u}(r_{U_{\delta},0}< \epsilon)\le C \delta^{-\kappa} \epsilon,\\
    \Leb_{\gamma^u}(U_{\delta})\le C \delta^{\sigma},
\end{gather*}where $r_{U_{\delta},0}(x):=d_{\gamma^u}(x, \partial U_{\delta})$, $r_{V_{\delta}, N}(x):=d_{(f^R)^N\gamma^u}\big((f^R)^Nx, \partial (f^R)^NV_{\delta}(x)\big)$, and $V_{\delta}(x)$ is this very connected component of $V_{\delta}$, which contains $x$.

\item Polynomial mixing rates: $\int R^{1+\xi} d\mu_X< \infty$ for some $\xi>0$.
\end{enumerate}
\begin{remark}
 In the growth lemmas a positive integer $N$ is usually chosen as a sufficiently large number.  It follows from \cite{Chernovjsp, CZnon} that $(X, f^R, \mu_X)$ and $(\mathcal{M}, f, \mu_{\mathcal{M}})$ can be modelled by hyperbolic Young towers \cite{Young}. A mixing rate of $(X, f^R, \mu_{X})$ is of order $O(\rho^n)$ for some $\rho \in (0,1)$.
\end{remark}

Consider now the first return tower \begin{align*}
    \Delta:=\{(x,n)\in X \times \{0,1,2,\cdots\}: n < R(x)\}.
\end{align*}

Dynamics $F: \Delta \to \Delta$ is defined as $F(x, n)=(x, n+1)$ if $n+1\le R(x)-1$, or as $F(x,n)=(f^Rx, 0)$ if $n=R(x)-1$. A projection $\pi:\Delta \to \mathcal{M}$ is $\pi(x,n):=f^n(x)$ as $\pi \circ F=f \circ \pi$. Define an $F$-invariant probability on $\Delta$ by  $\mu_{\Delta}:=(\pi^{-1})_{*}\mu_{\mathcal{M}}$. Finally we introduce projections $\pi_{X}: \Delta \to X$, $\pmb{\pi}_X: \mathcal{M} \to X$ and $\pi_{\mathbb{N}}: \Delta \to \mathbb{N}_0$, so that for any $(x,n) \in \Delta$ \begin{equation}\label{projections} \pi_{X}(x,n)=x,\quad \pi_{\mathbb{N}}(x,n)=n,\quad \pmb{\pi}_X=\pi_X \circ \pi^{-1}.
\end{equation}

We identify $\Delta_0:=X \times \{0\} \bigcap \Delta$ with $X$, and $F^R$ with $f^R$. Thus $\pi: X \to X$ is an identity map.

Note that $\pi:\Delta \to \mathcal{M}$ is bijective. Hence $(\Delta,F)$ is identical to $(\mathcal{M},f)$, and $(X, f^R)=(\Delta_0, F^R)$ is a CMZ structure on $(\Delta, F)$. 
\end{definition}

\begin{definition}[Point processes]\label{dynamicalpointprocess} \par
We define now a dynamical point process (a random counting measure) $\mathcal{N}^{r,q, T}$ on $[0,T]$ for any $T>0$. 
Let $A\subseteq [0,T]$ be a measurable set. Then
\begin{align*}
    \mathcal{N}^{r,q, T}(A):=\sum_{i \cdot \mu_{\mathcal{M}}(B_r(q)\times S^1)\in A}\mathbbm{1}_{B_r(q)\times S^1}\circ f^i=_d \sum_{i \cdot \mu_{\Delta}(\pi^{-1}B_r(q)\times S^1)\in A}\mathbbm{1}_{\pi^{-1}B_r(q)\times S^1}\circ F^i.
\end{align*}

We say that $\mathcal{P}$ is a Poisson point process on $\mathbb{R}^+\bigcup\{0\}$ if
\begin{enumerate}
    \item $\mathcal{P}$ is a random counting measure on $\mathbb{R}^+\bigcup\{0\}$.
    \item $\mathcal{P}(A)$ is a Poisson-distributed random variable for any Borel set $A \subseteq \mathbb{R}^+\bigcup\{0\}$.
    \item If the sets $A_1, A_2, \cdots, A_n \subseteq \mathbb{R}^+\bigcup\{0\}$ are pairwise disjoint, then  $\mathcal{P}(A_1), \cdots, \mathcal{P}(A_n)$ are independent.
    \item $\mathbb{E}\mathcal{P}(A)=\Leb(A)$ for any Borel set $A\subseteq \mathbb{R}^+\bigcup\{0\}$.
\end{enumerate}
\end{definition}

\begin{definition}[Convergence rates for Poisson approximations]\label{fpl}\ \par
For any $T>0$ consider the $\sigma$-algebra $\mathcal{C}$ on the space of point processes on $[0,T]$, defined as \begin{align}\label{sigmaalg}
\sigma \left\{\pi^{-1}_AB: A \subseteq [0,T],B \subseteq \mathbb{N}_0\right\},
\end{align}
where $A, B$ are Borel sets and
$\pi_A$ is an evaluation map defined on the space of counting measures, so that for any counting measure $\mathcal{N}$
\[\pi_A\mathcal{N}:=\mathcal{N}(A).\]

Now we can define convergence rates for the Poisson approximation of a dynamical point process as\[d_{TV}\left(\mathcal{N}^{r,q,T},\mathcal{P}\right):=\sup_{C\in \mathcal{C}}\left|\mu(\mathcal{N}^{r,q,T} \in C)-\mathbb{P}(\mathcal{P} \in C)\right| \precsim_{q, T} r^a,\]
where a constant $a>0$ is a convergence rate.
\end{definition}

\begin{definition}[Sections and s-quasi-sections]\label{quasisection} \par
Recall that $\pi_X: \Delta \to X $ is defined by $\pi_X(x,n)=x$. We say that $S_r\subseteq \mathcal{M}$ is a section if $\pi_X: \pi^{-1}S_r\to X$ is injective for any small $r>0$. Given $s>0$ a set $B_r(q)\times S^1$ is a s-quasi-section if there exists a section $S_r \subseteq B_r(q)\times S^1$, such that  \[\mu_{\mathcal{M}}\big(B_r(q)\times S^1\setminus S_r\big)\precsim_q r^{1+s}.\]
\end{definition}

\begin{assumption}[\textbf{Some geometric assumptions}]\label{assumption}\ \par
There exist constants $C, s, g>0$, $K \in \mathbb{N}$ and $\alpha \in (0,1]$, such that
\begin{enumerate}
    \item For a.e. $q \in \partial Q$ the set $B_r(q)\times S^1$ is a s-quasi-section for any sufficiently small $r>0$.
    \item $\bigcup_{i\ge 1}\partial X_i\subseteq \mathbb{S}$ (see the definition of $X_i$ in (\ref{parition})). This implies that $\gamma^u, \gamma^s$ are entirely contained in one of the sets $X_i$ (see Definition \ref{unstablecone} for $\gamma^u$ and $\gamma^s$).  
    \item For any $ \gamma^k \subseteq  \mathbb{S}^c \bigcap X$ ($k=u$ and $s$) and any $ \pmb{{\gamma}^u} \subseteq  \mathbb{S}^c \bigcap X$\begin{gather*}
        \diam f^j{{\gamma}^k}\le C (\diam \gamma^k)^\alpha \text{ for all }j\in[0, R|_{\gamma^k}),\\
\diam f^j\pmb{{\gamma}^u}\le C (\diam \pmb{{\gamma}^u})^\alpha \text{ when }j= R|_{\pmb{\gamma^u}},\\
\diam \pmb{{\gamma}^u}\le C (\diam f^j\pmb{{\gamma}^u})^\alpha \text{ for all }j\in[0, R|_{\pmb{\gamma^u}}].
    \end{gather*}
    \item $\diam \gamma^u \le C \diam f^i \gamma^u$, where $i\in[0, R|_{\gamma^u})$ and $\gamma^u \subseteq X$.
    \item $\diam f^i \gamma^u \le C \diam (f^R)^K \gamma^u$ where $i\in[0, R|_{\gamma^u})$ and $(f^R)^K$ is smooth on $\gamma^u\subseteq X$.
    \item $\mu_X(N_{\epsilon}(\mathbb{S}))\le C \epsilon^g$, where $N_{\epsilon}(\mathbb{S})$ is the $\epsilon$-neighborhood of $\mathbb{S}$.
    \item Slopes of all vectors in $\{v \in Df^i C_x^u: x \in X, i\in[0,R(x)) \}$ are uniformly bounded.
    \item Tangent vectors to each curve in $\bigcup_{i\ge 0 }(f^R)^{-i}\mathbb{S}$ are uniformly transversal to the vectors in $\mathcal{T}\gamma^u$ or $\mathcal{T}\gamma^s$. (Here``uniformly transversal" means that the angles between two vectors are bounded away from $0$).
    \item The set $\pmb{\mathbb{S}}$ partitions any vertical curve $\{q\} \times S^1$ into countably many connected pieces $\bigcup_{i}I_{q,i}$. This property implies that for each $i$ we have that  $\pmb{\pi}_XI_{q,i}\subseteq X_j $ for some $j$). For any sub-interval $I \subseteq I_{q,i}$,  \[\diam \pmb{\pi}_XI\le C(\diam I)^{\alpha},\quad \diam I\le C (\diam f^R \pmb{\pi}_X I)^{\alpha}.\]
\end{enumerate}
\begin{remark}Some conditions in Assumption \ref{assumption} have been verified in \cite{Subbb} for particular classes of slowly mixing billiards .
\end{remark}
\end{assumption}
\begin{theorem}[Convergence rates]\ \label{fplthm}\ \par
Suppose that a two-dimensional mixing billiard system $(\mathcal{M}, f, \mu_{\mathcal{M}})$ has a CMZ structure $(X, f^R)$ (see Definition \ref{cmz}), and the Assumption \ref{assumption} holds. Then, for $\Leb_{\partial Q}$-a.s. $q\in\partial Q$, \[d_{TV}\left(\mathcal{N}^{r,q,T},\mathcal{P}\right) \precsim_{q, T} r^a,\]
(see Definition \ref{fpl}), where convergence rate $a$ depends only on $s,g,\xi, K, \alpha$. A formula for $a$ is given in the Lemma \ref{rateconclusion}.
\end{theorem}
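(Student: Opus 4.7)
The plan is to lift the counting problem to the Young tower $(\Delta, F, \mu_\Delta)$ built over the induced system $(X, f^R, \mu_X)$, which by the CMZ hypotheses has exponential decay of correlations, perform a Chen-Stein style Poisson approximation on the induced map, and then undo the time-rescaling by $R$ using Theorem \ref{mld}. Via the projection $\pi \colon \Delta \to \mathcal{M}$, the first reduction is to replace the target set $B_r(q)\times S^1$ by the genuine section $S_r$ guaranteed by Assumption \ref{assumption}(1), at cost $\precsim_q r^{1+s}$. On $\pi^{-1}S_r$ the projection $\pi_X$ is injective, so each visit of $f^i$ to $S_r$ corresponds bijectively to a visit of $(f^R)^j$ to $A_r := \pmb{\pi}_X S_r \subseteq X$, with time indices related through the Birkhoff sum $S_n R := \sum_{k<n} R \circ (f^R)^k$.

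\textbf{Poisson approximation on the induced system.} Set $N \approx T/\bigl[\mu_{\mathcal{M}}(B_r(q)\times S^1)\,\mathbb{E}_{\mu_X} R\bigr]$. A Chen-Stein (Arratia-Goldstein-Gordon) decomposition on $(X, f^R)$ gives
\[
d_{TV}\bigl(\mathcal{N}^{A_r,N}_{\mathrm{ind}}, \mathcal{P}\bigr) \;\le\; b_1(r) + b_2(r),
\]
where $b_1$ controls short-return self-overlap and $b_2$ controls long-range decorrelation. For $b_2$, the exponential mixing on $(X, f^R)$ combined with Assumption \ref{assumption}(6--9) (to approximate $\mathbbm{1}_{A_r}$ by H\"older functions on scale $r^{g\alpha^K}$) produces a positive power of $r$. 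For $b_1$, the growth lemmas in Definition \ref{cmz} together with the diameter/transversality bounds in Assumption \ref{assumption}(2--5, 7, 8, 9) bound the short-return measure by a positive power of $r$. This part follows the scheme of \cite{Subbb} adapted to the CMZ setting.

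\textbf{Transfer to the original system via Theorem \ref{mld}.} The delicate step is to convert the induced estimate into a bound on $d_{TV}(\mathcal{N}^{r,q,T}, \mathcal{P})$, since the two time axes differ by $S_n R$, whose fluctuation around $n\,\mathbb{E}_{\mu_X} R$ must be controlled \emph{uniformly in $n \ge N$}. Because $\int R^{1+\xi} d\mu_X < \infty$, I would truncate $R$ at height $k$ and use the exponential mixing of $(X, f^R)$ on the bounded, locally constant observable $R_k - \mathbb{E} R_k$ to obtain $\|P^n(R_k - \mathbb{E} R_k)\|_p \precsim_{k} n^{-\xi + o(1)}$, following the truncation argument already used in the proof of Theorem \ref{existgym2}. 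Theorem \ref{mld} then yields
\[
\mu_X\!\left(\sup_{n \ge N}\left|\frac{S_n R - n\,\mathbb{E}_{\mu_X} R}{n}\right| > \eta\right) \;\precsim_{\eta}\; N^{-\xi + o(1)}
\]
after optimizing $k$. Taking $\eta$ to be a small positive power of $r$ shows that the two time axes agree up to an error negligible compared to $T/\mu_{\mathcal{M}}(B_r(q)\times S^1)$, which converts the induced Poisson bound into $d_{TV}(\mathcal{N}^{r,q,T}, \mathcal{P}) \precsim_{q,T} r^a$.

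\textbf{Main obstacle.} The crux is precisely the supremum in the display above: a pointwise polynomial large-deviation estimate at a single scale $N$ does not sum to a polynomial bound on $\sup_{n \ge N}|\cdot|$ under polynomial mixing, so without the maximal large-deviation estimate of Theorem \ref{mld} the transfer step collapses and one only recovers Poisson convergence without any rate, which is exactly the limitation of \cite{Subbb}. Balancing the three error contributions $r^{1+s}$, $b_1(r) + b_2(r)$, and the time-rescaling error produces the explicit formula for the exponent $a = a(s, g, \xi, K, \alpha)$ recorded in Lemma \ref{rateconclusion}.
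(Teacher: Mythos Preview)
Your proposal takes a genuinely different route from the paper, and the transfer step contains a real gap.

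\textbf{Where the paper differs.} The paper does \emph{not} run Chen--Stein on the induced system $(X,f^R)$ and then undo a time change. It runs Chen--Stein directly on $(\Delta,F)\cong(\mathcal{M},f)$, producing the three terms $R_1,R_2,R_3$ for the process $X'_i=\mathbbm{1}_{H_r\cap F^{m(N)}A_N^c}\circ F^i$ in original time. The maximal large deviation for $R$ (Lemma~\ref{mldforR}) is used not to rescale time but for a \emph{geometric} purpose: it defines the good set $A_N^c$ on which the number of visits to $X$ in a window of length $m(N)$ is pinned down (Lemma~\ref{numberreturntoX}), which in turn bounds $\diam \pi F^{m(N)}(\pi_p Q)$ for partition elements $Q\in\mathcal{Q}_{2m(N)}$ of the \emph{polynomial} Young tower. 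This diameter control is what makes the decorrelation term $R_1$ tractable via (\ref{polydecorrelation}). A separate Hardy--Littlewood/Borel--Cantelli argument (Lemma~\ref{truncatehole}) is needed to show, for a.e.\ $q$, that truncating the hole to $H_r\cap F^{m(N)}A_N^c$ costs only a power of $r$. The short-return term $R_2$ is then handled through the section $S_r$ and the geometric Lemmas~\ref{1}--\ref{4}.

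\textbf{The gap in your transfer step.} You assert that once $\sup_{n\ge N}|S_nR/n-\mathbb{E}R|<\eta$ on a set of large measure, the induced Poisson bound ``converts'' into a bound on $d_{TV}(\mathcal{N}^{r,q,T},\mathcal{P})$. But $d_{TV}$ on point processes is taken over the full $\sigma$-algebra (\ref{sigmaalg}), and a random monotone time change that is $\eta$-close to linear does not automatically move $d_{TV}$ by $O(\eta)$: for an arbitrary Borel test set $A\subseteq[0,T]$ a small shift can cross many boundary points. One would need an additional lemma quantifying stability of the Poisson law under such perturbations (something like: the coupling error is bounded by the expected number of atoms times the maximal shift), and this is neither stated nor proved. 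The paper sidesteps this entirely by never leaving the original time axis. A second, smaller issue: your $b_2$ estimate via H\"older approximation of $\mathbbm{1}_{A_r}$ on $(X,f^R)$ is plausible but not immediate, since $A_r=\pmb{\pi}_X S_r$ is the $\pmb{\pi}_X$-image of a section and its boundary geometry in $X$ is not controlled directly by Assumption~\ref{assumption}(6--9); the paper instead approximates on the polynomial tower using the partition $\mathcal{Q}_{2m(N)}$, which is tailored to this purpose.
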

\begin{remark}\ \par 
\begin{enumerate}
    \item Unlike \cite{vaientinullset, penebacktoball, Subbb}, the Theorem \ref{fplthm} establishes  convergence rate for Poisson approximations just under  requirement that a billiard system has an arbitrarily slow polynomial mixing rate. 
    \item We apply a new technique (maximal large deviations, see Theorem \ref{mld}) in order to obtain this convergence rate for Poisson approximations, whereas the technique of \cite{Subbb} does not allow that. 
    
    On the other hand, the methods of \cite{peneetds, Su} established convergence rates for Poisson approximations under conditions that contraction rates along (un)stable manifolds are sufficiently large. These conditions fail for billiards considered in our paper. A new via maximal large deviations does not require such strong conditions, and it is applicable to arbitrarily slow polynomially mixing billiard systems.
    
    \item In order to compare this to the mild assumptions in our previous paper \cite{Subbb}, note that the Assumption \ref{assumption} has more conditions on the singularities and (un)stable curves/manifolds, which are needed to provide estimates for convergence rates of Poisson approximations. Some conditions in the Assumption \ref{assumption} are natural for general hyperbolic systems, while the others are more specific for two-dimensional billiards. Notably, these conditions do not require fast rates of mixing.
    
    \item The paper \cite{peneijm} studied different holes in $\mathcal{M}$ for Sinai billiards with bounded horizons and for a diamond billiard. Although in the present paper we consider a special type of holes, i.e., the holes in the form of $B_r(q)\times S^1$, our technique can be adapted for more general holes. A type of holes, which we study here, is the most natural one for billiard systems. Consideration of general type holes would make the paper much longer and even more technical.
\end{enumerate}
\end{remark}

\begin{corollary}[The first hitting]\ \par
Under the same conditions as in Theorem \ref{fplthm} consider a moment of time when the first hitting of (passage through) a hole occurs, i.e., $\tau_{r,q}(x):=\inf\left\{n \ge 1: f^n(x) \in B_r(q)\times S^1\right\}$ for any $x \in \mathcal{M}$. Then for any $t>0$, and for almost every $q \in \partial Q$, the following relation holds 
\begin{equation*}
    \Big|\mu_{\mathcal{M}}\Big\{\tau_{r,q}>t\big/\mu\Big(B_r(q)\times S^1\Big)\Big\}-e^{-t}\Big|\precsim_{q,t} r^a \text{ for some }a>0.
\end{equation*}
\end{corollary}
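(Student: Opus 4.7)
The plan is direct: recognize that $\{\tau_{r,q} > t/\mu_{\mathcal{M}}(B_r(q) \times S^1)\}$ is, up to a set of $\mu_{\mathcal{M}}$-measure $\precsim_q r$, a cylinder event of the dynamical point process $\mathcal{N}^{r,q,t}$, and then apply Theorem \ref{fplthm} to that event.

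First, I would translate between the hitting-time language and the point-process language. By Definition \ref{dynamicalpointprocess}, $\mathcal{N}^{r,q,t}([0,t])$ counts those integers $i$ with $0 \le i \le t/\mu_{\mathcal{M}}(B_r(q)\times S^1)$ satisfying $f^i x \in B_r(q) \times S^1$. Since $\tau_{r,q}$ is defined as the \emph{first strictly positive} hitting time, the symmetric difference between the events $\{\mathcal{N}^{r,q,t}([0,t]) = 0\}$ and $\{\tau_{r,q} > t/\mu_{\mathcal{M}}(B_r(q)\times S^1)\}$ is contained in $B_r(q)\times S^1$, whose $\mu_{\mathcal{M}}$-measure is $\precsim_q r$. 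A minor, routine adjustment at the noninteger endpoint $t/\mu_{\mathcal{M}}(B_r(q)\times S^1)$ is similarly absorbed.

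Next, I would identify the limiting quantity. The cylinder set $C := \{\mathcal{N} : \mathcal{N}([0,t]) = 0\} = \pi_{[0,t]}^{-1}\{0\}$ belongs to the $\sigma$-algebra $\mathcal{C}$ from \eqref{sigmaalg}. For the unit-intensity Poisson process $\mathcal{P}$ of Definition \ref{dynamicalpointprocess}, $\mathcal{P}([0,t])$ is Poisson distributed with mean $\mathrm{Leb}([0,t]) = t$, so $\mathbb{P}(\mathcal{P} \in C) = e^{-t}$.

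Finally, I would invoke Theorem \ref{fplthm} with $T = t$ applied to this particular $C \in \mathcal{C}$, which immediately gives
\[
\bigl|\mu_{\mathcal{M}}(\mathcal{N}^{r,q,t} \in C) - e^{-t}\bigr| \precsim_{q,t} r^a.
\]
Combining this with the $O(r)$ discrepancy from the translation step above produces the claimed bound (with, in the worst case, the exponent $\min\{a,1\}$, still positive). Since the corollary is essentially a direct translation of Theorem \ref{fplthm} into hitting-time language, there is no substantive obstacle; the only care required is the bookkeeping of the boundary index $i=0$ and the rounding at $t/\mu_{\mathcal{M}}(B_r(q)\times S^1)$, both of which contribute only an error of order $r$.
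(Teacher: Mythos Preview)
Your proposal is correct and follows essentially the same approach as the paper: identify $\{\tau_{r,q} > t/\mu_{\mathcal{M}}(B_r(q)\times S^1)\}$ with the cylinder event $\{\mathcal{N}^{r,q,t}([0,t])=0\}$ and apply Theorem~\ref{fplthm}. The paper's proof simply asserts the equality of these two events and invokes the theorem; your additional bookkeeping for the $i=0$ term and the endpoint rounding (each contributing an $O(r)$ error) makes explicit what the paper glosses over with ``Clearly,'' but the substance is identical.
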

\begin{proof}
Clearly, $\mu_{\mathcal{M}}\left\{\tau_{r,q}>t/\mu\big(B_r(q)\times S^d\big)\right\}=\mu_{\mathcal{M}}\left\{\mathcal{N}^{r,q}\left[0,t/\mu\big(B_r(q)\times S^d\big)\right]=0\right\}$. The corollary follows now from the Theorem \ref{fplthm}.
\end{proof}

\subsection{Exponentially and polynomially hyperbolic Young towers}
In order to prove Theorem \ref{fplthm}, we need to consider two hyperbolic Young towers (see \cite{Young,Young2}). It follows from \cite{Chernovjsp, CZnon, Markarianetds}, that a CMZ structure (see Definition \ref{cmz}) has a hyperbolic product structure $\Lambda \subsetneq X$, such that

\begin{enumerate}

    \item $\Lambda=\left(\bigcup \gamma^s\right) \bigcap \left(\bigcup \gamma^u\right)$ for a family of unstable disks $\Gamma^u:=\{\gamma^u\}$ and a family of stable disks $\Gamma^s:=\{\gamma^s\}$.
    
    \item $\dim \gamma^s+\dim \gamma^u=\dim \mathcal{M}=2$,
    
    \item each $\gamma^s$ intersect each $\gamma^u$ at
    exactly one point,
    
    \item stable and unstable disks are transversal, and the angles between them are uniformly bounded away from 0.
\end{enumerate}

A Markov property also holds, i.e., there exist pairwise disjoint $s$-subsets $\Lambda_1,\Lambda_2, \cdots \subseteq \Lambda$, such that $\Lambda_i=\left(\bigcup_{\gamma^s \in \Gamma^s_i} \gamma^s\right) \bigcap \left(\bigcup_{\gamma^u \in \Gamma^u} \gamma^u\right)$ for some $\Gamma^s_i\subseteq \Gamma^s$, and \begin{enumerate}
        \item $\Leb_{\gamma}\left(\Lambda \setminus(\bigcup_{i \ge 1}\Lambda_i)\right)=0$ on each $\gamma \in \Gamma^u$,
        \item There exist a return time function $R_p: \Lambda \to \mathbb{N}$ and a return map $f^{R_p}: \Lambda \to \Lambda$, such that for each $i \ge 1$ a value of $R_p|_{\Lambda_i}$ is  constant. Besides,   $f^{R_p} (\Lambda_i)$ is a $u$-subset (i.e., each $f^{R_p}(\Lambda_i)=\left(\bigcup_{\gamma^s \in \Gamma^s} \gamma^s\right) \bigcap \left(\bigcup_{\gamma^u \in \Gamma^u_i} \gamma^u\right)$ for some $\Gamma^u_i\subseteq \Gamma^u$). Moreover, for all $x \in \Lambda_i$, \begin{align*}
          f^{R_p}\Big(\gamma^s(x)\Big) \subseteq \gamma^s\Big(f^{R_p}(x)\Big), \quad f^{R_p}\Big(\gamma^u(x)\Big) \supseteq \gamma^u\Big(f^{R_p}(x)\Big),
        \end{align*}where $\gamma^u(y)$ (resp. $\gamma^s(y)$) is an element of $\Gamma^u$ (resp. $\Gamma^s$)
which contains $y\in \Lambda$.
\item One can consider also another return time $R_e: \Lambda \to \mathbb{N}$, such that $R_e|_{\Lambda_i}$ is a constant and $R^{R_e}=R_p$.
\end{enumerate}

\subsubsection{Polynomially hyperbolic Young towers}\label{polytower} \par

Define a tower $\Delta_p$ and a map $F_p: \Delta_p \to \Delta_p$ as 
\begin{gather*}
    \Delta_p:=\{(x,l) \in \Lambda \times \mathbb{N}: 0 \le l < R_p(x) \},\\
    F_p(x,l):=\begin{cases}
 (x,l+1),      &l < R_p(x)-1\\
\left(f^{R_p}(x),0\right),  & l=R_p(x)-1\\
\end{cases}.
\end{gather*}

At first, we introduce a family of partitions $(\mathcal{Q}_k)_{k \ge 0}$ of $\Delta_p$ as 
\[\mathcal{Q}_0:=\{\Lambda_i \times \{l\}, i \ge 1, l < R_p|_{\Lambda_i}\}, \quad \mathcal{Q}_k:= \bigvee_{0 \le i \le k} F_p^{-i} \mathcal{Q}_0.\]

Next, a projection $\pi_p: \Delta_p \to \Delta$ is defined as 
\[\pi_p(x,l):=F^l(x).\]

There exist probability measures $\mu_{\Delta_p}$, $\mu_{\Lambda}$ on $\Delta_p$ and $\Lambda$, respectively, such that  
\begin{equation}\label{allpolymeasure}
    F\circ \pi_p=\pi_p \circ F_p, \quad (\pi_p)_{*}\mu_{\Delta_p}=\mu_{\Delta}, \quad (F_p)_{*}\mu_{\Delta_p}=\mu_{\Delta_p},  \quad \left(f^{R_p}\right)_{*}\mu_{\Lambda}=\mu_{\Lambda}.
\end{equation}

Let $\gcd\{R_p\}=1$. We suppose that there is a constant $C>0$, such that for any $m>k\ge 1$, any $(Q_i)_{i \ge 1} \subseteq \mathcal{Q}_k$, any $h: \Delta_p \to \mathbb{R}$ satisfying $||h||_{\infty}\le 1$, and $h(x,l)=h(y,l)$ for any $x,y \in \gamma^s \in \Gamma^s$, and for any allowable $l \in \mathbb{N}$ (i.e., $h$ is $\sigma(\bigcup_{k\ge 0}\mathcal{Q}_k)$-measurable), we have the following estimate for decay of correlations
\begin{equation}\label{polydecorrelation}
    \Big|\int \mathbbm{1}_{\bigcup_{i \ge 1} Q_i}  h \circ F_p^{m} d\mu_{\Delta_p}-\mu_{\Delta_p}\Big(\bigcup_{i \ge 1} Q_i\Big) \int h d\mu_{\Delta_p}\Big| \le C(m-k)^{-\xi} \mu_{\Delta_p}\Big(\bigcup_{i \ge 1} Q_i\Big).
\end{equation}

\subsubsection{Exponentially hyperbolic Young towers}\ \par
Define a tower $\Delta_e$ and a map $F_e: \Delta_e \to \Delta_e$ as 
\begin{gather*}
    \Delta_e:=\{(x,l) \in \Lambda \times \mathbb{N}: 0 \le l < R_e(x) \},\\
    F_e(x,l):=\begin{cases}
 (x,l+1),      &l < R_e(x)-1\\
\left((f^R)^{R_e}(x),0\right),  & l=R_e(x)-1\\
\end{cases}.
\end{gather*}

Next, let a projection $\pi_e: \Delta_e \to X $ is defined as \[\pi_e(x,l):=(f^R)^l(x).\]

Consider now  the following equivalence relation $\sim$ on $\Delta_e$

\[(x,n) \sim (y,m) \text{ if and only if }  x,y \in \gamma^s \text{ for some } \gamma^s \in \Gamma^s \text{ and } n=m.\]

By making use of this equivalence relation we can define a quotient tower $\widetilde{\Delta_{e}}:=\Delta_{e}/\sim$ with a canonical projection $\widetilde{\pi_{\Delta_e}}:\Delta_{e} \to \widetilde{\Delta_{e}}$. The corresponding quotient map $\widetilde{F_e}: \widetilde{\Delta_{e}} \to \widetilde{\Delta_{e}}$ is given via the relation $F_e=\widetilde{F_e} \circ \widetilde{\pi_{\Delta_e}}$.

There exist probability measures $\mu_{\Delta_e}$ and  $\mu_{\widetilde{\Delta_e}}$ on $\Delta_e, \widetilde{\Delta_e}$, respectively, such that  
\begin{align}
    &f^R\circ \pi_e=\pi_e \circ F_e, \quad (\pi_e)_{*}\mu_{\Delta_e}=\mu_{X}, \quad \mu_{\widetilde{\Delta_e}}=(\widetilde{\pi_{\Delta_e}})_{*}\mu_{\Delta_e},\nonumber \\
    & \quad \quad \quad \quad (F_e)_{*}\mu_{\Delta_e}=\mu_{\Delta_e}, \quad (\widetilde{F_e})_{*}\mu_{\widetilde{\Delta_e}}=\mu_{\widetilde{\Delta_e}}. \label{allexpmeasure}
\end{align}

Consider a Banach space $\mathcal{B}$ be $\{v: \widetilde{\Delta_e}\to \mathbb{R}: ||v||_{\mathcal{B}}:=||v||_m+||v||_l< \infty\}$, where \[||v||_m=\sup_{n}\frac{\big|\big|v|_{\widetilde{\Delta_{e,n}}}\big|\big|_{\infty}}{e^{\epsilon n}},\quad ||v||_l=\sup_{n,i}\sup_{x,y \in \widetilde{\Delta_{e,n,i}}}\frac{\big|v(x)-v(y)\big|}{d_{\widetilde{\Delta_e}}(x,y)e^{\epsilon n}}, \quad \epsilon>0,\]
\[\widetilde{\Delta_{e,n}}:=\Delta_{e} \bigcap (\Lambda \times \{n\})/\sim, \quad \widetilde{\Delta_{e,n,i}}:=\Delta_{e} \bigcap (\Lambda_i \times \{n\})/\sim,\] and \[d_{\widetilde{\Delta_e}} \text{ is a metric defined on }\widetilde{\Delta_e} \text{ (see its definition in \cite{Young})}.\] 

Now, following \cite{Young, balint}, fix any $p \in \mathbb{N}$, and choose $\epsilon=\epsilon_p>0$ to be sufficiently small. Then we have $\mathcal{B}\subsetneq L^p$, i.e., $||v||_p \precsim ||v||_{\mathcal{B}}$ for any $v \in \mathcal{B}$. Denote the transfer operator of $\widetilde{F_e}$ by $P$, and suppose that $\gcd\{R_e\}=1$. Then there is a constant $C=C_p>0$, such that for any zero mean  $v\in \mathcal{B}$, and for any $n\ge 1$, \begin{equation}\label{expdecorrelation}
    ||P^nv||_p \precsim ||P^nv||_{\mathcal{B}}\precsim_p ||v||_{\mathcal{B}} \cdot e^{-Cn}.
\end{equation}

\subsection{Maximal large deviations for $R:X \to \mathbb{N}$}
\begin{lemma}\label{mldforR}
$\mu_X\big(\sup_{n\ge N}\big|\frac{\sum_{i \le n}(R-\mathbb{E}R)\circ (f^R)^i}{n}\big|\ge \epsilon\big)\precsim_{\delta, \epsilon} N^{-\xi(1-\delta)}$ for any  $\delta\in (1/2,1)$.
\end{lemma}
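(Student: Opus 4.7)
The plan is to truncate $R$ so that Theorem \ref{mld} applies to a bounded observable, and to absorb the tail using Hopf's maximal ergodic inequality. Set $R_k:=\min(R,k)$ and decompose $R-\mathbb{E}R=\phi_k+\psi_k$ with $\phi_k:=R_k-\mathbb{E}R_k$ (bounded by $2k$) and $\psi_k:=(R-R_k)-\mathbb{E}(R-R_k)$. The moment assumption $\int R^{1+\xi}\,d\mu_X<\infty$ gives $\|\psi_k\|_1\le 2\int_{\{R>k\}}R\,d\mu_X\precsim k^{-\xi}$. For the tail, Hopf's maximal ergodic inequality then yields $\mu_X\bigl(\sup_{n\ge 1}|\sum_{i\le n}\psi_k\circ(f^R)^i|/n\ge \epsilon/2\bigr)\precsim_{\epsilon} k^{-\xi}$, uniformly in $N$.

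For the bounded part $\phi_k$, I would use exponential mixing of $(X,f^R,\mu_X)$ via the quotient exponential Young tower $\widetilde{\Delta_e}$, see \eqref{expdecorrelation}. Because $R$ is constant on each Markov piece $X_i$, the lift $\phi_k\circ\pi_e$ is locally constant on the tower, so its Lipschitz $\mathcal{B}$-seminorm vanishes on each $\widetilde{\Delta_{e,n,i}}$ while its sup seminorm is bounded by $2k$, giving $\|\phi_k\|_{\mathcal{B}}\precsim k$. Consequently, for every $p\in\mathbb{N}$ there is $C_p>0$ with $\|P^n\phi_k\|_p\precsim_p k\,e^{-C_p n}$, so $\|P^n\phi_k\|_p^p\precsim_{p,\beta}k^p\,n^{-\beta}$ for any $\beta\in(0,p)$. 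Theorem \ref{mld} then gives
\[
\mu_X\Bigl(\sup_{n\ge N}\Bigl|\tfrac{\sum_{i\le n}\phi_k\circ(f^R)^i}{n}\Bigr|\ge\tfrac{\epsilon}{2}\Bigr)\precsim_{\epsilon,p,\beta} k^{2p}\,N^{-\beta}.
\]

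Adding the two estimates yields a total bound $\precsim_\epsilon k^{-\xi}+k^{2p}\,N^{-\beta}$. Setting $k=N^\gamma$ and balancing the two terms via $\xi\gamma=\beta-2p\gamma$ gives $\gamma=\beta/(2p+\xi)$ and overall rate $N^{-\xi\beta/(2p+\xi)}$. To reach the target rate $N^{-\xi(1-\delta)}$, choose $\beta=(1-\delta)(2p+\xi)$; the admissibility condition $\beta<p$ required by Theorem \ref{mld} then reduces to $(2\delta-1)\,p>(1-\delta)\,\xi$, which is satisfiable precisely because $\delta>1/2$, by picking $p$ sufficiently large (depending on $\delta$ and $\xi$).

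The main obstacle is verifying that the lift of $R_k$ to $\widetilde{\Delta_e}$ really lies in $\mathcal{B}$ with norm growing only linearly in $k$. This requires $R\circ\pi_e$ to be constant on stable leaves at every tower level, which ultimately rests on the compatibility of the Markov partition $\{X_i\}$ (on which $R$ is constant) with the $s$-subsets $\{\Lambda_i\}$, together with the fact that $f^R$ preserves the stable foliation. Without this regularity one could only access the weaker polynomial mixing rate of $(X,f^R,\mu_X)$ transported from the original CMZ polynomial moment, and one could not take $\beta$ as large as is needed to achieve rate $\xi(1-\delta)$ for $\delta$ arbitrarily close to $1/2$.
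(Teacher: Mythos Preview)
Your proposal is essentially the paper's proof: truncate $R$ at level $k$, lift the bounded part to the quotient exponential tower $\widetilde{\Delta_e}$ where it has vanishing Lipschitz seminorm (this is exactly what the paper deduces from Assumption~\ref{assumption}(2), confirming your ``main obstacle''), apply Theorem~\ref{mld} via \eqref{expdecorrelation} to get $k^{2p}N^{-\beta}$, control the tail by the maximal ergodic theorem to get $k^{-\xi}$, and optimize with $k=N^{1-\delta}$ under the constraint $\beta<p$, which forces $\delta>1/2$. The only point you omit is the reduction when $\gcd\{R_e\}>1$, which the paper handles by passing to the mixing subsystem $(f^R)^{N_e}$ and then summing over cosets using $(f^R)_*\mu_X=\mu_X$.
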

\begin{proof}
Suppose that $\gcd\{R_e\}=1$, i.e., a hyperbolic Young tower $(\Delta_e, F_e, \mu_{\Delta_e})$ is mixing. First we will consider the function $R\circ \pi_e$. By Assumption \ref{assumption}, any $\gamma^s \subsetneq \Lambda$ and $\gamma^u \subseteq \Lambda_k$ (for any $k\ge 1$) are entirely contained in some $X_j$. Hence the values $R|_{\gamma^s}=R|_{\gamma^u}, R_e|_{\gamma^s}=R_e|_{\gamma^u}$ are constant. For each $i < R_e|_{\gamma^s}=R_e|_{\gamma^u}$ the sets $(f^R)^i\gamma^s, (f^R)^i\gamma^u$ are also entirely contained in some $X_j$. Then $R|_{(f^R)^i\gamma^s}=R|_{(f^R)^i\gamma^u}$ is constant. Therefore, $R\circ \pi_e$ is constant along $F_e^i\gamma^s, F_e^i \gamma^u \subseteq \Delta_e$ for any $i < R_e|_{\gamma^s}=R_e|_{\gamma^u}$. Hence, we can define $\widetilde{R\circ \pi_e}: \widetilde{\Delta_e} \to \mathbb{N}$, so that $\widetilde{R\circ \pi_e} \circ \widetilde{\pi_{\Delta_e}}=R \circ \pi_e$ and $||\widetilde{R\circ \pi_e}||_l=0$.

Consider now for any $k\in \mathbb{N}$ a truncated  function $\min\{k, \widetilde{R \circ \pi_e}\}$. This function is bounded in $\mathcal{B}$, with $||\min\{k, \widetilde{R \circ \pi_e}\}||_l=0$ and $||\min\{k, \widetilde{R \circ \pi_e}\}||_{\mathcal{B}} \le k$. By (\ref{expdecorrelation}), for any $p>p'>1, n\ge 1$ we have \begin{align*}
    &\big|\big|P^n\big(\min\{k, \widetilde{R \circ \pi_e}\}-\mathbb{E}\min\{k, \widetilde{R \circ \pi_e}\}\big)\big|\big|_{p}\precsim_p k \cdot e^{-Cn}, \\
    &\implies \big|\big|P^n\big(\min\{k, \widetilde{R \circ \pi_e}\}-\mathbb{E}\min\{k, \widetilde{R \circ \pi_e}\}\big)\big|\big|^p_{p}\precsim_{p,p'} k^p \cdot n^{-p'}.
\end{align*} 

It follows from the Theorem \ref{mld} that for any $N\ge 1$ \begin{equation*}
    \Big|\Big|\sup_{n \ge N}\Big|\frac{\sum_{i \le n}\big(\min\{k, \widetilde{R \circ \pi_e}\}-\mathbb{E}\min\{k, \widetilde{R \circ \pi_e}\}\big) \circ \widetilde{F_e}^i}{n}\Big|\Big|\Big|^{2p}_{2p}\precsim_{p,p'} k^{2p} \cdot n^{-p'}.
\end{equation*}

Then, together with (\ref{allexpmeasure}) and the maximal ergodic theorem, we obtain that
\begin{align*}
&\mu_{X}\Big\{\sup_{n \ge N}\Big|\frac{\sum_{i \le n}\big({R }-\mathbb{E} {R}\big) \circ (f^R)^i}{n}\Big|>\epsilon \Big\}\\
    &=\mu_{\Delta_e}\Big\{\sup_{n \ge N}\Big|\frac{\sum_{i \le n}\big({R \circ \pi_e}-\mathbb{E} {R \circ \pi_e}\big) \circ {F_e}^i}{n}\Big|>\epsilon \Big\}\\
    &= \mu_{\widetilde{\Delta_e}}\Big\{\sup_{n \ge N}\Big|\frac{\sum_{i \le n}\big(\widetilde{R \circ \pi_e}-\mathbb{E} \widetilde{R \circ \pi_e}\big) \circ \widetilde{F_e}^i}{n}\Big|>\epsilon \Big\}\\
    &\precsim \mu_{\widetilde{\Delta_e}}\Big\{\sup_{n \ge N}\Big|\frac{\sum_{i \le n}\big(\min\{k, \widetilde{R \circ \pi_e}\}-\mathbb{E}\min\{k, \widetilde{R \circ \pi_e}\}\big) \circ \widetilde{F_e}^i}{n}\Big|> \epsilon/2\Big\}\\
    &\quad +\mu_{\widetilde{\Delta_e}}\Big\{\sup_{n \ge N}\Big|\frac{\sum_{i \le n}\Big[\widetilde{R \circ \pi_e}-\min\{k, \widetilde{R \circ \pi_e}\}-\mathbb{E}\big(\widetilde{R \circ \pi_e}-\min\{k, \widetilde{R \circ \pi_e}\}\big)\Big] \circ \widetilde{F_e}^i}{n}\Big|> \epsilon/2\Big\}\\
      &\precsim \mu_{\widetilde{\Delta_e}}\Big\{\sup_{n \ge N}\Big|\frac{\sum_{i \le n}\big(\min\{k, \widetilde{R \circ \pi_e}\}-\mathbb{E}\min\{k, \widetilde{R \circ \pi_e}\}\big) \circ \widetilde{F_e}^i}{n}\Big|>\epsilon/2\Big\}\\
    &\quad +\mu_{\widetilde{\Delta_e}}\Big\{\sup_{n \ge N}\Big|\frac{\sum_{i \le n}\big[\widetilde{R \circ \pi_e}-\min\{k, \widetilde{R \circ \pi_e}\}\big] \circ \widetilde{F_e}^i}{n}\Big|> \epsilon/2\Big\}\\
    &\precsim_{p,p',\epsilon} k^{2p} \cdot N^{-p'}+\big|\big|\widetilde{R \circ \pi_e}-\min\{k, \widetilde{R \circ \pi_e}\}\big|\big|_1\precsim_{p,p',\epsilon} k^{2p} \cdot N^{-p'}+ k^{-\xi}.
\end{align*} 

For any fixed $\delta \in (1/2,1)$ choose $k=N^{1-\delta}$, $p>\xi/(2\delta-1)$ and $p'=\xi+2p(1-\delta)$.  Then the condition $\gcd\{R_e\}=1$ concludes a proof. 

If $N_e:=\gcd\{R_e\}>1$, then \begin{equation*}\mu_X\Big(\sup_{n\ge N}\Big|\frac{\sum_{i \le n}(R-\mathbb{E}R)\circ (f^R)^{iN_e}}{n}\Big|\ge \epsilon\Big)\precsim_{\delta, \epsilon} N^{-\xi(1-\delta)}.
\end{equation*}

Using that $(f^R)_{*} \mu_X=\mu_X$, we have for any $k\in [0, N_e)$ 
\[\mu_X\Big(\sup_{n\ge N}\Big|\frac{\sum_{i \le n}(R-\mathbb{E}R)\circ (f^R)^{iN_e+k}}{n}\Big|\ge \epsilon\Big)\precsim_{\delta,\epsilon} N^{-\xi(1-\delta)}.\]

Now summation over $k=0,1,\cdots N_e-1$ concludes a proof.
\end{proof}

\subsection{Poisson approximations for polynomial hyperbolic Young towers}

Throughout this section, in order to simplify notations, we denote $H_r:=\pi^{-1}(B_r(q)\times S^1)$, $\pmb{X_i}:=\mathbbm{1}_{ H_r} \circ F^i$, $n:=\left\lfloor \frac{T}{\mu_{\Delta}(H_r)}\right\rfloor$. Also denote by $h \in [0,1]$ a function $h$ with values in $[0,1]$. 

Let $\{\pmb{\hat{X}_i}\}_{i\ge 0}$ be i.i.d. random variables, such that  $\pmb{X_i}=_d\pmb{\hat{X}_i}$ for each $i \ge 0$. For any interval $J\subseteq [0,T]$, define $\pmb{X_J}:=(\pmb{X_j})_{j \in J}$, $\pmb{\hat{X_J}}:=(\pmb{\hat{X_j}})_{j \in J}$ and $J':=J/\mu_{\Delta}(H_r):=\{x:x\mu_{\Delta}(H_r)\in J\}\subseteq [0,n]$. By applying Theorems 2 and 3 of \cite{chenmethod} to $(\pmb{\hat{X}_{i}})_{i \ge 0}$ one gets for any disjoint intervals $J_1, \cdots, J_m \subseteq [0,T]$ that \begin{align*}
    \sup_{h\in[0,1]} \Big|\mathbb{E}h\Big(\mathcal{P}(J_1), \cdots, \mathcal{P}(J_m)\Big)-h\left(\pmb{\hat{X}_{J'_1}}, \cdots, \pmb{\hat{X}_{J'_m}}\right)\Big|\le 4  n  \mu_{\Delta}(H_r)^2 \precsim_T \mu_{\Delta}(H_r).
\end{align*} 

By making use of the Definition \ref{fpl} we obtain the following  functional Poisson approximation
\begin{align}
    d_{TV}\left(\mathcal{N}^{r,q,T},\mathcal{P}\right)&\le\sup_{\text{disjoint }J_i \subseteq [0,T], h\in[0,1]} \Big|\mathbb{E}h\Big(\mathcal{P}(J_1), \cdots, \mathcal{P}(J_m)\Big)-h\Big(\mathcal{N}^{r,q,T}(J_1), \cdots, \mathcal{N}^{r,q,T}(J_m)\Big)\Big|\nonumber\\
    &=\sup_{\text{disjoint }J_i \subseteq [0,T], h\in[0,1]} \Big|\mathbb{E}h\Big(\mathcal{P}(J_1), \cdots, \mathcal{P}(J_m)\Big)-h\left(\pmb{\hat{X}_{J'_1}}, \cdots, \pmb{\hat{X}_{J'_m}}\right)\Big|\nonumber\\
    &\quad +\sup_{\text{disjoint }J_i \subseteq [0,T], h\in[0,1]} \Big|\mathbb{E}h\left(\pmb{{X}_{J'_1}}, \cdots, \pmb{{X}_{J'_m}}\right)-h\left(\pmb{\hat{X}_{J'_1}}, \cdots, \pmb{\hat{X}_{J'_m}}\right)\Big|\nonumber\\
    & \precsim_T \mu_{\Delta}(H_r)+ \sup_{h\in [0,1]}|\mathbb{E}h(\pmb{X_1}, \cdots \pmb{X_n})-\mathbb{E}h(\pmb{\hat{X}_1}, \cdots, \pmb{\hat{X}_n})|\nonumber\\
    & \precsim_T \mu_{\mathcal{M}}\big(B_r(q)\times S^1\big)+ \sup_{h\in [0,1]}|\mathbb{E}h(\pmb{X_1}, \cdots \pmb{X_n})-\mathbb{E}h(\pmb{\hat{X}_1}, \cdots, \pmb{\hat{X}_n})|.\label{totalconvergencerate}
    \end{align}

Now we will deal with $\sup_{h\in [0,1]}|\mathbb{E}h(\pmb{X_1}, \cdots \pmb{X_n})-\mathbb{E}h(\pmb{\hat{X}_1}, \cdots, \pmb{\hat{X}_n})|$. Let
\[A_N:=\Big\{x\in X: \frac{R}{N} \ge \epsilon \text{ or }\sup_{n\ge N}\big|\frac{\sum_{i < n}(R-\mathbb{E}R)\circ (f^R)^{iN_e+k}}{n}\big|\ge \epsilon \Big\} \times \mathbb{N} \bigcap \Delta.\]

Then for any $k\in \mathbb{N}$,
\begin{align*}
    A_N \subseteq &\Big\{x\in X: \frac{R}{N} \ge \epsilon \text{ or }\sup_{n\ge N}\big|\frac{\sum_{i < n}(R-\mathbb{E}R)\circ (f^R)^{iN_e+k}}{n}\big|\ge \epsilon \Big\} \times \{0,1,2,\cdots k-1\}\\
    & \bigcup \{(x,n)\in \Delta: x\in X, n \ge k\}.
\end{align*}
    Next, by Lemma \ref{mldforR}, we have \[\mu_{\Delta}(A_N)\precsim_{\epsilon, \delta} \sup_{k}\Big\{\max\{N^{-1-\xi}, N^{-\xi(1-\delta)}\}\cdot k+k^{-\xi}\Big\}\precsim_{\epsilon, \delta}N^{-\xi^2(1-\delta)/(1+\xi)}.\]

Let $m(N) \in \mathbb{N}$ (where $m(N)\ge N$, will be determined later). Then $\mu_{\Delta}(F^{m(N)}A_N)=\mu_{\Delta}(A_N)\precsim N^{-\xi^2(1-\delta)/(1+\xi)}$.
\begin{lemma}\label{truncatehole}
Let $N=\big\lfloor j^{(4+4\xi)/[\xi^2(1-\delta)]}\big\rfloor$. For $\Leb_{\partial Q}$-a.s. $q\in \partial Q$,  \[\sup_{r>0}\frac{1}{\mu_{\Delta}(H_r)}\int_{H_r} \mathbbm{1}_{F^{m(N)}A_N}d\mu_{\Delta}\precsim_{q,\delta} N^{-\xi^2(1-\delta)/(2+2\xi)}.\]
\end{lemma}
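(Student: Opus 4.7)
The plan is to convert the estimate into a standard Hardy--Littlewood maximal inequality on the one-dimensional curve $\partial Q$, and then apply Borel--Cantelli along the sequence $N_j:=\bigl\lfloor j^{(4+4\xi)/[\xi^2(1-\delta)]}\bigr\rfloor$. First, using that $\pi:\Delta\to\mathcal{M}$ is a measure-preserving bijection with $\pi_*\mu_\Delta=\mu_{\mathcal{M}}$, I would rewrite
\[
\int_{H_r}\mathbbm{1}_{F^{m(N)}A_N}\,d\mu_\Delta=\mu_{\mathcal{M}}\bigl((B_r(q)\times S^1)\cap B_N\bigr),
\]
where $B_N:=\pi(F^{m(N)}A_N)\subseteq\mathcal{M}$ satisfies $\mu_{\mathcal{M}}(B_N)=\mu_\Delta(A_N)\precsim_{\epsilon,\delta}N^{-\xi^2(1-\delta)/(1+\xi)}$ by $F$-invariance of $\mu_\Delta$ together with the bound established just before the lemma.

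Next, using the explicit form (\ref{SRB}), I would disintegrate $\mu_{\mathcal M}$ along the $S^1$-fibers via the density
\[
f_N(q'):=\int_{S^1}\mathbbm{1}_{B_N}(q',\phi)\,\frac{\cos\phi}{2\Leb_{\partial Q}(\partial Q)}\,d\phi,
\]
which satisfies $||f_N||_{L^1(\Leb_{\partial Q})}=\mu_{\mathcal M}(B_N)$. Since $\mu_{\mathcal{M}}(B_r(q)\times S^1)=\Leb_{\partial Q}(B_r(q))/\Leb_{\partial Q}(\partial Q)\approx r$, a direct computation gives, up to a multiplicative constant depending only on $\Leb_{\partial Q}(\partial Q)$,
\[
\sup_{r>0}\frac{1}{\mu_\Delta(H_r)}\int_{H_r}\mathbbm{1}_{F^{m(N)}A_N}\,d\mu_\Delta\ \le\ C\cdot Mf_N(q),
\]
where $M$ denotes the Hardy--Littlewood maximal operator on the one-dimensional Riemannian manifold $\partial Q$.

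Finally, set $a_j:=N_j^{-\xi^2(1-\delta)/(2+2\xi)}$. The specific choice of $N_j$ gives $\mu_{\mathcal M}(B_{N_j})\precsim j^{-4}$ and $a_j\approx j^{-2}$, so the weak-type $(1,1)$ inequality for $M$ on $\partial Q$ yields
\[
\Leb_{\partial Q}\{q\in\partial Q:\,Mf_{N_j}(q)>a_j/C\}\precsim ||f_{N_j}||_{L^1}/a_j\precsim j^{-4}/j^{-2}=j^{-2},
\]
which is summable in $j$. The Borel--Cantelli lemma then provides, for $\Leb_{\partial Q}$-a.e.\ $q$, an index $J(q)$ such that $Mf_{N_j}(q)\le a_j/C$ whenever $j\ge J(q)$; the finitely many smaller $j$ are absorbed into a $q$-dependent constant using the trivial bound $Mf_{N_j}\le 1$ and the fact that $1/a_j$ is bounded on any finite range of indices. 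The main subtlety is the second step: ensuring that the supremum over $r$ on the tower genuinely reduces to a standard maximal function on $\partial Q$ with the correct $L^1$-norm, so that the square-root loss between $\mu_{\mathcal M}(B_{N_j})$ and $a_j$ is matched exactly by the target exponent $\xi^2(1-\delta)/(2+2\xi)$. Once this bookkeeping is in place, bijectivity of $\pi$, weak-$(1,1)$ in one dimension, and summable Borel--Cantelli finish the argument.
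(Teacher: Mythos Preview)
Your proposal is correct and follows essentially the same route as the paper: reduce to a Hardy--Littlewood maximal inequality on $\partial Q$, use the weak-type $(1,1)$ bound together with $\mu_{\Delta}(A_N)\precsim N^{-\xi^2(1-\delta)/(1+\xi)}$, and apply Borel--Cantelli along the sequence $N_j$. Your explicit disintegration over the $S^1$-fibers and the computation $\mu_{\mathcal M}(B_{N_j})/a_j\precsim j^{-2}$ simply make transparent what the paper records in one line.
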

\begin{proof}
By the Hardy-Littlewood maximal inequality, 
\[\sup_{\epsilon>0}\epsilon \Leb_{\partial Q}\Big\{q\in \partial Q: \sup_r\frac{1}{\mu_{\Delta}(H_r)}\int_{H_r} \mathbbm{1}_{F^{m(N)}A_N}d\mu_{\Delta}\ge \epsilon \Big\}\precsim \mu_{\Delta}(F^{m(N)}A_N)\precsim_{\delta} N^{-\xi^2(1-\delta)/(1+\xi)}. \]

Choose $\epsilon=N^{-\xi^2(1-\delta)/(2+2\xi)} , N=\big\lfloor j^{(4+4\xi)/[\xi^2(1-\delta)]}\big\rfloor$. Then by Borel-Cantelli lemma, for $\Leb$-a.e. $q\in \partial Q$, there is $j_q \in \mathbb{N}$, such that for any $j \ge j_q$ we have  \[\sup_r\frac{1}{\mu_{\Delta}(H_r)}\int_{H_r} \mathbbm{1}_{F^{m(N)}A_N}d\mu_{\Delta}\le N^{-\xi^2(1-\delta)/(2+2\xi)},\]

and the lemma is proved.
\end{proof}

This lemma shows that \[\mu_{\Delta}(H_r \bigcap F^{m(N)}A^c_N) \approx_q \mu_{\Delta}(H_r)\approx r, and \quad \bar{n}:=\left\lfloor \frac{T}{\mu_{\Delta}(H_r\bigcap F^{m(N)}A_N^c)}\right\rfloor \approx_q n.\] 

Let $X_i':=\mathbbm{1}_{F^{m(N)}A^c_N\bigcap H_r} \circ F^i$, and $\{{\hat{X}_i}\}_{i\ge 0}$ be i.i.d. random variables, such that  ${X'_i}=_d{\hat{X}_i}$ for each $i \ge 0$. Throughout this section, $N$ will be the same as in Lemma \ref{truncatehole}.

\begin{lemma}\label{subtract}For $\Leb_{\partial Q}$-a.e. $q\in \partial Q$, the following inequalities hold.
\begin{gather*} \sup_{h\in[0,1]}|\mathbb{E}h(X'_0, X'_1,  \cdots, X'_n)-\mathbb{E}h(\pmb{X_0}, \pmb{X_1},  \cdots, \pmb{X_n})|\precsim_{q,\delta} N^{-\xi^2(1-\delta)/(2+2\xi)},\\
\sup_{h\in[0,1]}|\mathbb{E}h(\hat{X}_0, \hat{X}_1,  \cdots, \hat{X}_n)-\mathbb{E}h(\pmb{\hat{X}_0}, \pmb{\hat{X}_1},  \cdots, \pmb{\hat{X}_n})|\precsim_{q,\delta} N^{-\xi^2(1-\delta)/(2+2\xi)},\\
        \sup_{h\in [0,1]}|\mathbb{E}h(\pmb{X_1}, \cdots \pmb{X_n})-\mathbb{E}h(\pmb{\hat{X}_1}, \cdots, \pmb{\hat{X}_n})|\precsim_{q,\delta} \sup_{h\in [0,1]}|\mathbb{E}h(X_1, \cdots X_n)-\mathbb{E}h(\hat{X}_1, \cdots, \hat{X}_n)|\\
        \quad \quad \quad \quad \quad \quad \quad \quad \quad \quad \quad  +N^{-\xi^2(1-\delta)/(2+2\xi)}.
      \end{gather*}
\end{lemma}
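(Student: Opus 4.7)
The plan is to treat the three bounds as essentially the same calculation, repeated once on the dynamical side and once on the i.i.d.\ side, and then combined by a triangle inequality. The key observation is that $X_i' \neq \pmb{X_i}$ precisely when $F^i(x)\in H_r\cap F^{m(N)}A_N$, so the discrepancy between the two processes is controlled by the measure of this intersection, which was estimated in Lemma \ref{truncatehole}.

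For the first inequality I would argue as follows. Since $h\in[0,1]$, the integrand $h(X_0',\ldots,X_n')-h(\pmb{X_0},\ldots,\pmb{X_n})$ is bounded by $1$ in absolute value and vanishes outside the event $E:=\bigcup_{i\le n}\{X_i'\neq \pmb{X_i}\}$. By the invariance $F_*\mu_\Delta=\mu_\Delta$, union bound, and Lemma \ref{truncatehole},
\begin{align*}
\mu_\Delta(E)&\le \sum_{i\le n}\mu_\Delta\bigl(H_r\cap F^{m(N)}A_N\bigr)\\
&\le n\cdot \mu_\Delta(H_r)\cdot \frac{\mu_\Delta(H_r\cap F^{m(N)}A_N)}{\mu_\Delta(H_r)}\precsim_{q,T,\delta} N^{-\xi^2(1-\delta)/(2+2\xi)},
\end{align*}
using $n\mu_\Delta(H_r)\precsim T$. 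This gives the first bound.

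For the second inequality, I would couple the i.i.d.\ sequences $(\hat X_i)$ and $(\pmb{\hat X_i})$ optimally coordinatewise. Since the marginals are Bernoulli with parameters $\mu_\Delta(H_r\cap F^{m(N)}A_N^c)$ and $\mu_\Delta(H_r)$ respectively, maximal coupling yields $\mathbb{P}(\hat X_i\neq \pmb{\hat X_i})=\mu_\Delta(H_r\cap F^{m(N)}A_N)$. The same union bound as above, again combined with Lemma \ref{truncatehole} and $n\approx \mu_\Delta(H_r)^{-1}$, yields the stated rate. The third inequality is then immediate: write
\begin{equation*}
\mathbb{E}h(\pmb{X_1},\ldots,\pmb{X_n})-\mathbb{E}h(\pmb{\hat X_1},\ldots,\pmb{\hat X_n})
\end{equation*}
as the telescoping sum
\begin{equation*}
\bigl[\mathbb{E}h(\pmb{X_\bullet})-\mathbb{E}h(X'_\bullet)\bigr]+\bigl[\mathbb{E}h(X'_\bullet)-\mathbb{E}h(\hat X_\bullet)\bigr]+\bigl[\mathbb{E}h(\hat X_\bullet)-\mathbb{E}h(\pmb{\hat X_\bullet})\bigr],
\end{equation*}
bound the outer two terms by the first two inequalities, and note that the middle term is exactly the quantity on the right-hand side (interpreting the $X_i$ in the statement as $X_i'$, which appears to be a typographical slip).

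The only real subtlety — and the step I expect to cost the most care — is the second inequality, because the almost-sure bound from Lemma \ref{truncatehole} holds for $\Leb_{\partial Q}$-a.e.\ $q$ but for all $r>0$, while the coupling argument on the i.i.d.\ side must be performed with the same exceptional null set of $q$ as on the dynamical side, so that the triangle inequality in the third step is valid for the same full-measure set of basepoints. This is handled by fixing once and for all the exceptional set coming from Lemma \ref{truncatehole} and working with that single set throughout; after that the calculation is routine.
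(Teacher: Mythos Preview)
Your proposal is correct and follows essentially the same route as the paper. The paper phrases the first two bounds via a Lindeberg-type telescoping replacement one coordinate at a time (and, for the i.i.d.\ side, uses independence to reduce to a single-coordinate total variation), whereas you use the equivalent ``bad event'' bound on the dynamical side and an explicit maximal coupling on the i.i.d.\ side; both arguments land on the same quantity $n\,\mu_\Delta(H_r\cap F^{m(N)}A_N)$ and invoke Lemma~\ref{truncatehole}. Your reading of $X_i$ as $X_i'$ in the third inequality is also what the paper intends, and the ``subtlety'' you flag about the exceptional set is not an issue: one simply fixes the full-measure set of $q$ from Lemma~\ref{truncatehole} once and works inside it.
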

\begin{proof}At first note that $|X'_j-\pmb{X_j}|\le \mathbbm{1}_{H_r\bigcap F^{m(N)}A_N}\circ F^i$. Then
\begin{align*}
    &|\mathbb{E}h(X'_0, X'_1,  \cdots, X'_n)-\mathbb{E}h(\pmb{X_0}, \pmb{X_1},  \cdots, \pmb{X_n})|\\
    &\le \sum_{j \le n} |\mathbb{E}h(X'_0,\cdots X'_j, \pmb{X_{j+1}}\cdots, \pmb{X_n})-\mathbb{E}h(X'_0,\cdots X'_{j-1},\pmb{X_j} \cdots, \pmb{X_n})|\\
    &\le \sum_{j \le n}|\mathbb{E}X'_j-\pmb{X_j}|\precsim n \mu_{\Delta}(H_r \bigcap F^{m(N)}A_N)\precsim_{q,\delta} N^{-\xi^2(1-\delta)/(2+2\xi)}
\end{align*}holds for $\Leb_{\partial Q}$-a.e. $q \in \partial Q$ due to the Lemma \ref{truncatehole}.
Finally the relations
\begin{align*}
    \sup_{h\in[0,1]}&|\mathbb{E}h(\hat{X}_0, \hat{X}_1,  \cdots, \hat{X}_n)-\mathbb{E}h(\pmb{\hat{X}_0}, \pmb{\hat{X}_1},  \cdots, \pmb{\hat{X}_n})|\\
    &\le \sum_{j \le n} \sup_{h\in[0,1]}|\mathbb{E}h(\hat{X}_0,\cdots \hat{X}_j, \pmb{\hat{X}_{j+1}}\cdots, \pmb{\hat{X}_n})-\mathbb{E}h(\hat{X}_0,\cdots \hat{X}_{j-1},\pmb{\hat{X}_j} \cdots, \pmb{\hat{X}_n})|\\
    &\le \sum_{j \le n}\sup_{h\in [0,1]}|\mathbb{E}h(\hat{X}_j)-\mathbb{E}h(\pmb{\hat{X}_j})|\\
    &=\sum_{j \le n}\sup_{h\in [0,1]}|\mathbb{E}h(X'_j)-\mathbb{E}h(\pmb{X_j})|\\
    &\precsim n \mu_{\Delta}(H_r \bigcap F^{m(N)}A_N)\precsim_{q,\delta} N^{-\xi^2(1-\delta)/(2+2\xi)}
\end{align*}hold for $\Leb_{\partial Q}$-a.e. $q \in \partial Q$ due to the Lemma \ref{truncatehole}.
\end{proof}

This lemma shows that in order to estimate $\sup_{h\in [0,1]}|\mathbb{E}h(\pmb{X_1}, \cdots \pmb{X_n})-\mathbb{E}h(\pmb{\hat{X}_1}, \cdots, \pmb{\hat{X}_n})|$, we just need to estimate $\sup_{h\in [0,1]}|\mathbb{E}h(X'_1, \cdots X'_n)-\mathbb{E}h(\hat{X}_1, \cdots, \hat{X}_n)|$. By Lemma 1 of \cite{Su}, for any $p\in (2m(N),n]$ we have that $\sup_{h\in [0,1]}|\mathbb{E}h(X'_1, \cdots X'_n)-\mathbb{E}h(\hat{X}_1, \cdots, \hat{X}_n)|\precsim_T R_1+R_2+R_3$, where \begin{align}
    &R_1:=\sum_{0 \le l \le n-p}\sup_{h\in[0,1]} \Big| \mathbb{E}\Big[\mathbbm{1}_{X'_0=1}  h(X'_p,\cdots, X'_{p+l})\Big]-\mathbb{E}\mathbbm{1}_{X'_0=1}  \mathbb{E}h(X'_p,\cdots, X'_{p+l})\Big| \label{poissoncorrelation}\\
    &R_2:=n\cdot \mathbb{E} \left(\mathbbm{1}_{X'_0=1}  \mathbbm{1}_{\sum_{1\le j \le p-1}X'_j\ge 1}\right) \label{shortreturn}\\
    &R_3:=p\cdot n \cdot  \mu_{\Delta}\big(X'_0=1)^2+ p\cdot \mu_{\Delta}\big(X'_0=1\big)\precsim_q p\cdot \mu_{\Delta}\big(X'_0=1\big) .\nonumber
\end{align} 
\begin{remark} 
The quantities $R_1, R_2, R_3$ in Lemma 1 of \cite{Su} are represented in terms of $B_r(z)$. However, as it was shown above, exactly the same arguments give similar representations of $R_1, R_2, R_3$ in terms of $\{X_0'=1\}$.
\end{remark}

The relation (\ref{shortreturn}) is called a short return. In the following subsection we will apply a polynomial hyperbolic Young tower $(\Delta_p, F_p, \mu_{\Delta_p})$ for estimation of convergence rates in (\ref{poissoncorrelation}).
\begin{remark}\label{gcdremark}
The paper \cite{Subbb} established that $\sup_{h\in [0,1]}|\mathbb{E}h(X'_1, \cdots X'_n)-\mathbb{E}h(\hat{X}_1, \cdots, \hat{X}_n)|$ is dominated by $R_1,R_2,R_3$, even though $(\Delta_p, F_p, \mu_{\Delta_p})$ is not necessarily mixing, i.e., $\gcd \{R_p\}\ge 1$. Therefore, without loss of generality, in the following subsections we assume that $\gcd\{R_e\}=\gcd\{R_p\}=1$. Hence (\ref{expdecorrelation}) and (\ref{polydecorrelation}) hold. 
\end{remark}

\subsubsection{Convergence rates for (\ref{poissoncorrelation})}
Throughout this subsection the relation $Q\in \mathcal{Q}_{2m(N)}\bigcap \pi^{-1}_pA_N^c$ will mean that there is $Q' \in \mathcal{Q}_{2m(N)}$, such that $Q=Q'\bigcap \pi^{-1}_pA_N^c$. We choose now $Q \in \mathcal{Q}_{2m(N)}\bigcap \pi^{-1}_pA_N^c$ (see section \ref{polytower}) so that $Q \bigcap \pi_p^{-1}F^{-m(N)}(H_r \bigcap F^{m(N)}A_N^c)\neq \emptyset$. By collecting all such $Q$ we have 
\[\pi_p^{-1}F^{-m(N)}(H_r \bigcap F^{m(N)}A_N^c) \subseteq \bigcup Q, \quad F^{m(N)}\pi_pQ \bigcap H_r\neq \emptyset, \quad Q \subseteq \pi_p^{-1}A_N^c. \] 

Denote $\overline{A}:=\bigcup Q, A:=\pi_p^{-1}F^{-m(N)}(H_r \bigcap F^{m(N)}A_N^c)$, $\partial A:=\bigcup_{Q \bigcap \bar{A}\setminus A}\neq \emptyset $, then $\overline{A}\setminus A\subseteq \partial A$. By Assumption \ref{assumption} $\partial X_i \subseteq \mathbb{S}$ for each $i\ge 1$. Then for each $i\in [0, 2m(N)]$ the set $F^i(\pi_p Q)$ is entirely contained in one of the sets $(X_j\times \mathbb{N}) \bigcap \Delta$. 

\begin{lemma}\label{numberreturntoX}
Let $m(N)> (\mathbb{E}R+\epsilon)N, N>7, and \epsilon< \mathbb{E}R/18$. Suppose that  $\pi_p Q, F(\pi_p Q), \cdots, F^{m(N)}(\pi_p Q)$ had $a$ visits to $X$. Then \[a\in \Big[\frac{7m(N)}{8(\mathbb{E}R+\epsilon)}, \frac{m(N)}{\mathbb{E}R-2\epsilon}\Big].\]

If $ F^{m(N)}(\pi_p Q), \cdots, F^{2m(N)}(\pi_p Q)$ visit $X$ now $b$ times. Then \[b\in \Big[ \frac{m(N)}{2(\mathbb{E}R-2\epsilon)}, \frac{2m(N)}{\mathbb{E}R-2\epsilon}\Big].\]

\end{lemma}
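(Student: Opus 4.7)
The plan is to translate the visit-count problem into a question about ergodic sums of the return time $R$ under the induced map $f^R$, and then to invoke the $A_N^c$ condition to pin these sums down. Fix any point of $Q$ and let $y_0 \in X$ be its tower base; set $R_i := R \circ (f^R)^{i-1}(y_0)$ and $T_k := R_1 + \cdots + R_k$. Then the visits of the $F$-orbit of $\pi_p Q$ to $X$ occur precisely at the times $T_0 = 0 < T_1 < T_2 < \cdots$ (any shift due to a non-zero starting height in the tower contributes only an $O(1)$ error, absorbed by the slack in the claim). Because $Q \in \mathcal{Q}_{2m(N)}$ is a cylinder over the first $2m(N)$ iterates of $F_p$, the sequence $R_1, R_2, \ldots$ is genuinely constant on $Q$, so ``exactly $a$ visits in $[0, m(N)]$'' becomes
\[
T_{a-1} \;\leq\; m(N) \;<\; T_a.
\]

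The only dynamical input is that $Q \subseteq \pi_p^{-1} A_N^c$. Unpacking the definition of $A_N$ and using Remark \ref{gcdremark} to assume $\gcd\{R_e\} = 1$ (hence $N_e = 1$), this yields the uniform ergodic estimate
\[
(\mathbb{E}R - \epsilon)\, n \;\leq\; T_n \;\leq\; (\mathbb{E}R + \epsilon)\, n \quad \text{for every } n \geq N.
\]
First I force $a > N$ by contradiction: if $a \leq N$, then $T_a \leq T_N \leq (\mathbb{E}R + \epsilon) N < m(N)$ by the standing hypothesis $m(N) > (\mathbb{E}R + \epsilon) N$, contradicting $T_a > m(N)$. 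The upper ergodic bound then gives $m(N) < T_a \leq (\mathbb{E}R + \epsilon) a$, so $a > m(N)/(\mathbb{E}R + \epsilon)$, which is comfortably stronger than the claimed $7m(N)/(8(\mathbb{E}R + \epsilon))$. Conversely, from $T_{a-1} \leq m(N)$ and the lower ergodic bound I obtain $a \leq m(N)/(\mathbb{E}R - \epsilon) + 1$, and the quantitative hypotheses $N > 7$ and $\epsilon < \mathbb{E}R/18$ are used precisely to upgrade this to $a \leq m(N)/(\mathbb{E}R - 2\epsilon)$ via the $\epsilon$-gap estimate $m(N)/(\mathbb{E}R - 2\epsilon) - m(N)/(\mathbb{E}R - \epsilon) = m(N)\,\epsilon / [(\mathbb{E}R - \epsilon)(\mathbb{E}R - 2\epsilon)]$.

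For the bounds on $b$, I apply the same argument to $[0, 2m(N)]$: the total visit count $a'$ there lies in the interval $(2m(N)/(\mathbb{E}R + \epsilon),\, 2m(N)/(\mathbb{E}R - 2\epsilon)]$. Since $b = a' - a + O(1)$ (depending on whether a visit happens at time exactly $m(N)$), combining the bounds on $a'$ and $a$ delivers the claimed interval for $b$. The wide margins (factors of $1/2$ and $2$) absorb both the $\epsilon$-versus-$2\epsilon$ slack and the $O(1)$ endpoint-counting ambiguity; the key numerical inequality is $4(\mathbb{E}R - 2\epsilon) \geq 3(\mathbb{E}R + \epsilon)$, i.e., $\epsilon \leq \mathbb{E}R/11$, which is implied by $\epsilon < \mathbb{E}R/18$. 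The main obstacle is thus not conceptual but one of careful bookkeeping: combining the four ergodic inequalities (two each for $a$ and $a'$) to produce precisely the bounds stated, with the quantitative constants in the hypothesis providing just enough margin.
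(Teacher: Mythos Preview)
Your overall strategy matches the paper's: pick $w \in \pi_p Q \cap A_N^c$, reduce the visit count to a statement about the ergodic sums $T_k = \sum_{i<k} R \circ (f^R)^i(\pi_X w)$, show $a \ge N$ so that the $A_N^c$ ergodic estimate applies, and then bound $b$ by subtracting the range for $a$ from the analogous range for the total count on $[0, 2m(N)]$.

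There is, however, one genuine gap. The starting height $h := \pi_{\mathbb{N}}(w)$ in the tower is \emph{not} $O(1)$: it can be as large as $R(\pi_X w) - 1$, and $R$ is unbounded. The correct relation between the visit count and the ergodic sums is $m(N) + h \in [T_a, T_{a+1}]$, not $T_{a-1} \le m(N) < T_a$. What controls $h$ is the \emph{other} clause in the definition of $A_N^c$, namely $R(\pi_X w)/N < \epsilon$, which you never invoke. With it one has $h < \epsilon N \le \epsilon a$ once $a \ge N$, and then
\[
\frac{m(N)}{a} \;\ge\; \frac{T_a}{a} - \frac{h}{a} \;\ge\; (\mathbb{E}R - \epsilon) - \epsilon \;=\; \mathbb{E}R - 2\epsilon,
\]
which is exactly where the $2\epsilon$ in the paper's denominator originates. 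Your alternative route to the same upper bound on $a$ --- absorbing the ``$+1$'' from $a \le m(N)/(\mathbb{E}R - \epsilon) + 1$ into the $\epsilon$-gap --- would require $m(N)\epsilon \ge (\mathbb{E}R - \epsilon)(\mathbb{E}R - 2\epsilon)$, and this is \emph{not} implied by the stated hypotheses (take $\epsilon$ very small compared to $\mathbb{E}R$, $N = 8$, and $m(N)$ just above $(\mathbb{E}R + \epsilon)N$). Once you use the $R/N < \epsilon$ condition to control the height, your argument coincides with the paper's.
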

\begin{proof}
Recall that $Q=Q'\bigcap \pi^{-1}_pA_N^c$ for some $Q'\in \mathcal{Q}_{2m(N)}$. Thus the same holds for $Q'$.
Observe that different elements in $\pi_pQ'$ share the same values $a,b$. Indeed, for each $k\in [0, m(N)]$, $F^k(\pi_p Q')$ is entirely contained in one of the sets $(X_j\times \mathbb{N}) \bigcap \Delta$, and different elements in $\pi_pQ'$ return to $X$ at the same time. Since $Q \subseteq Q'$, then $Q$ shares the same $a,b$.

Since $\pi_p Q \bigcap A_N^c \neq \emptyset$, then there is $w \in \pi_p Q \bigcap A_N^c$, and $\pi_{X}w \in F^{-\pi_{\mathbb{N}}(w)}\pi_pQ \bigcap \pi_X A_N^c \subsetneq X$. Therefore for any $n\ge N$,
\[\sup_{n\ge N}\Big|\frac{R^n(\pi_Xw)}{n}-\mathbb{E}R\Big|\le \epsilon, \quad  \frac{R(\pi_Xw)}{n}\le \frac{R(\pi_Xw)}{N}\le \epsilon, \]
where $R^n:=\sum_{i < n}R\circ (f^R)^i$.

\textbf{Claim.} If $j > (\mathbb{E}R+\epsilon)N$ and $j\in [R^m(\pi_Xw), R^{m+1}(\pi_Xw)]$ for some $m\ge 1$, then $m \ge N$.

This statement holds because if $R^N(\pi_Xw) \in [(\mathbb{E}R-\epsilon) N, (\mathbb{E}R+\epsilon) N]$, then $j > R^N(\pi_Xw)$. If now $m<N$, then $j \le R^{m+1}(\pi_Xw)\le R^N(\pi_Xw)$. This contradiction concludes a proof of the claim.

Note that $m(N)+\pi_{\mathbb{N}}(w)\ge m(N) >(\mathbb{E}R+\epsilon)N$. Then \begin{align*}
    &m(N)+\pi_{\mathbb{N}}(w)\in [R^a(\pi_Xw), R^{a+1}(\pi_Xw)],\\
    & m(N)\in [R^a(\pi_Xw)-R(\pi_Xw), R^{a+1}(\pi_Xw)],
\end{align*} and
\begin{align*}
    &2m(N)+\pi_{\mathbb{N}}(w)\in [R^{b+a}(\pi_Xw), R^{b+a+1}(\pi_Xw)],\\
    &2m(N)\in [R^{b+a}(\pi_Xw)-R(\pi_Xw), R^{b+a+1}(\pi_Xw)].
\end{align*}
 
These relations imply that\begin{align*}
    &\frac{m(N)}{a}\in \Big[\frac{R^a(\pi_Xw)-R(\pi_Xw)}{a}, \frac{R^{a+1}(\pi_Xw)}{a}\Big]\subseteq [\mathbb{E}R-2\epsilon, (\mathbb{E}R+\epsilon)(1+N^{-1})],\\
    &\implies a \in \Big[\frac{m(N)}{(\mathbb{E}R+\epsilon)(1+N^{-1})}, \frac{m(N)}{\mathbb{E}R-2\epsilon}\Big]\subseteq \Big[\frac{7m(N)}{8(\mathbb{E}R+\epsilon)}, \frac{m(N)}{\mathbb{E}R-2\epsilon}\Big].
\end{align*}

\begin{align*}
    &\frac{2m(N)}{b+a}\in \Big[\frac{R^{b+a}(\pi_Xw)-R(\pi_Xw)}{b+a}, \frac{R^{b+a+1}(\pi_Xw)}{b+a}\Big]\subseteq [\mathbb{E}R-2\epsilon, (\mathbb{E}R+\epsilon)(1+N^{-1})],\\
    &\implies b+a \in \Big[\frac{2m(N)}{(\mathbb{E}R+\epsilon)(1+N^{-1})}, \frac{2m(N)}{\mathbb{E}R-2\epsilon}\Big].
\end{align*}

Therefore,
\begin{align*}
    b&\in \Big[\frac{2m(N)}{(\mathbb{E}R+\epsilon)(1+N^{-1})}-\frac{m(N)}{\mathbb{E}R-2\epsilon}, \frac{2m(N)}{\mathbb{E}R-2\epsilon}-\frac{m(N)}{(\mathbb{E}R+\epsilon)(1+N^{-1})} \Big]\\
    &\subseteq \Big[ \frac{m(N)}{2(\mathbb{E}R-2\epsilon)}, \frac{2m(N)}{\mathbb{E}R-2\epsilon}\Big],
\end{align*}where ``$\subseteq$" is due to $ N>7, \epsilon< \mathbb{E}R/18$, and  $\frac{2m(N)}{(\mathbb{E}R+\epsilon)(1+N^{-1})}-\frac{m(N)}{\mathbb{E}R-2\epsilon} \ge \frac{m(N)}{2(\mathbb{E}R-2\epsilon)}$.
\end{proof}
Now we can estimate a size of $\pi F^{m(N)}(\pi_pQ)$. In what follows we assume that $m(N)> (\mathbb{E}R+\epsilon)N, N>7, \epsilon< \mathbb{E}R/18$.
\begin{lemma}
$\diam \pi F^{m(N)}(\pi_pQ) \precsim  \beta^{\frac{m(N)\alpha}{2\mathbb{E}R-4\epsilon}}$.
\end{lemma}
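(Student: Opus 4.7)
The plan is to exploit the fact that $Q$ being an atom of $\mathcal{Q}_{2m(N)}=\bigvee_{i\le 2m(N)}F_p^{-i}\mathcal{Q}_0$ forces the $F$-orbit of $\pi_p Q$ to stay on a single Markov cylinder for $2m(N)$ steps, and then to combine the hyperbolicity of $f^R$ (condition 1 of Definition \ref{cmz}) with the H\"older tower-jump estimate of Assumption \ref{assumption}(3). Write $\pi_p Q\subseteq \Lambda_{i_0}\times\{l_0\}$, so $\pi(\pi_p Q)=f^{l_0}(\tilde{Q})$ for some $\tilde{Q}\subseteq\Lambda_{i_0}$, and $\pi F^{m(N)}(\pi_p Q)=f^{m(N)+l_0}(\tilde{Q})$. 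By Lemma \ref{numberreturntoX}, the orbit of $\pi_p Q$ visits $X$ exactly $a$ times in $[0,m(N)]$ and $b$ times in $[m(N),2m(N)]$, and a short arithmetic check on the stated ranges (using $\epsilon<\mathbb{E}R/18$, which gives $\tfrac{7}{8(\mathbb{E}R+\epsilon)}\ge \tfrac{1}{2\mathbb{E}R-4\epsilon}$) yields $\min(a,b)\ge \tfrac{m(N)}{2\mathbb{E}R-4\epsilon}$. In particular, writing $\tilde{Q}_a:=(f^R)^a\tilde{Q}$ for the image at the $a$-th return, we have $f^{m(N)+l_0}(\tilde{Q})=f^j(\tilde{Q}_a)$ for some $0\le j<R|_{\tilde{Q}_a}$.

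The first step is to show $\diam \tilde{Q}_a\le C\beta^{\min(a,b)}$. Since $Q$ is an atom of $\mathcal{Q}_{2m(N)}$, the set $\tilde{Q}_{a+b}:=(f^R)^{a+b}\tilde{Q}$ equals $F^{t_{a+b}}(\pi_p Q)$ (where $t_{a+b}$ is the last visit time to $X$ within $2m(N)$), which sits inside the $\pi_p$-image of a single $\mathcal{Q}_0$-element, and since this visit brings us back to the base $\Lambda$, it reduces to a single $\Lambda_j$ of bounded diameter. Decomposing $\tilde{Q}$ into its unstable and stable leaves inside the product set $\Lambda_{i_0}$ and invoking the hyperbolicity of $f^R$: unstable vectors expand by $\beta^{-b}$ under $(f^R)^b$, so
\begin{equation*}
\diam_u \tilde{Q}_a\le C\beta^b\diam_u \tilde{Q}_{a+b}\le C\beta^b,
\end{equation*}
while stable vectors contract by $\beta^a$ under $(f^R)^a$, giving $\diam_s \tilde{Q}_a\le C\beta^a$. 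Uniform transversality of the cones (Assumption \ref{assumption}(8)) then gives $\diam\tilde{Q}_a\le C\beta^{\min(a,b)}$. Applying Assumption \ref{assumption}(3) leaf-by-leaf to $f^j(\tilde{Q}_a)$ with $j<R|_{\tilde{Q}_a}$ concludes:
\begin{equation*}
\diam \pi F^{m(N)}(\pi_p Q)=\diam f^j(\tilde{Q}_a)\le C(\diam \tilde{Q}_a)^\alpha\le C\beta^{\alpha \min(a,b)}\le C\beta^{\alpha m(N)/(2\mathbb{E}R-4\epsilon)},
\end{equation*}
as claimed.

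The main obstacle is the final leaf-by-leaf passage: Assumption \ref{assumption}(3) is stated for individual unstable or stable manifolds $\gamma^{u,s}\subseteq \mathbb{S}^c\cap X$, whereas $\tilde{Q}_a$ is a hyperbolic product set with both unstable and stable extent. The inclusion $\bigcup_i\partial X_i\subseteq \mathbb{S}$ from Assumption \ref{assumption}(2) ensures that leaves are not cut by $\partial X_i$ and therefore remain inside $\mathbb{S}^c\cap X$, and the unstable and stable leaf diameters combine cleanly via cone transversality to control the overall diameter of $f^j(\tilde{Q}_a)$.
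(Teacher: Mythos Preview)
Your argument is correct and essentially identical to the paper's: both decompose along the stable and unstable leaves of the product set, use the return counts $a,b$ from Lemma \ref{numberreturntoX} together with the CMZ hyperbolicity to bound each leaf after $a$ (resp.\ $b$) returns, apply the H\"older estimate of Assumption \ref{assumption}(3) for the remaining $f^j$-step, and take the worse exponent. One harmless inaccuracy: the $(a{+}b)$th return to $X$ need not land back in $\Lambda$, so $\tilde Q_{a+b}$ is not literally ``a single $\Lambda_j$''---but all you actually use is that $\tilde Q_{a+b}\subseteq X$ has uniformly bounded diameter, which is immediate.
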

\begin{proof}
Recall that $Q=Q'\bigcap \pi^{-1}_pA_N^c$ for some $Q'\in \mathcal{Q}_{2m(N)}$. By Lemma \ref{numberreturntoX}, any $\gamma^s \subsetneq \pi_pQ'$ visits $X$ at least $\frac{7m(N)}{8(\mathbb{E}R+\epsilon)}$ times in the time window $[0, m(N)]$, and any $\pi F^{m(N)}\gamma^u \subsetneq \pi F^{m(N)}(\pi_pQ')$ visits $X$ at least $\frac{m(N)}{2(\mathbb{E}R-2\epsilon)}$ times in the time window $[m(N), 2m(N)]$. Using the Assumption \ref{assumption} and the Definition \ref{cmz} we have for any $\gamma^u, \gamma^s \subsetneq \pi_pQ'$ that
\[\diam \pi F^{m(N)}\gamma^s\precsim \beta^{\frac{7m(N)\alpha}{8(\mathbb{E}R+\epsilon)}},\quad \diam \pi F^{m(N)}\gamma^u\precsim \beta^{\frac{m(N)\alpha}{2(\mathbb{E}R-2\epsilon)}}.\]

Since $\pi_pQ'$ has a product structure, then $\gamma^u$ and $\gamma^s$ intersect  exactly at one point in $\pi_pQ'$. Therefore \begin{align*}
    \diam \pi F^{m(N)}(\pi_pQ)&\le \diam \pi F^{m(N)}(\pi_pQ')\\
    &\le \sup_{\gamma^u, \gamma^s \subsetneq \pi_pQ'}\{\diam \pi F^{m(N)}\gamma^s+\diam \pi F^{m(N)}\gamma^u\}\precsim \beta^{\frac{m(N)\alpha}{2\mathbb{E}R-4\epsilon}},
\end{align*}

which concludes a proof of lemma.
\end{proof}

We can estimate now the measure of $\pi F^{m(N)}\pi_p \partial A$.
\begin{lemma}\label{boundarymeasure}
$\mu_{\mathcal{M}}(\pi F^{m(N)}\pi_p \partial A)\precsim \beta^{\frac{m(N)\alpha}{2\mathbb{E}R-4\epsilon}}$.
\end{lemma}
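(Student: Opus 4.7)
The plan is to exhibit $\pi F^{m(N)}\pi_p\partial A$ as a subset of a thin tubular neighborhood of $\partial(B_r(q)\times S^1)$ in $\mathcal{M}$, with the thickness given by the preceding diameter estimate, and then to read off the measure of that neighborhood from the explicit formula for $\mu_{\mathcal{M}}$.

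First, I would unpack the combinatorial content of ``$Q\subseteq \partial A$''. By construction every such $Q$ lies in $\overline{A}$, hence meets $A$, while simultaneously meeting $\overline{A}\setminus A$. Because $Q\subseteq \pi_p^{-1}A_N^c$, the restriction ``$\cap F^{m(N)}A_N^c$'' appearing in the definition of $A$ is automatic on $Q$, so the two non-emptiness conditions translate into the statement that $\pi F^{m(N)}\pi_p(Q)$ contains both a point in $B_r(q)\times S^1$ and a point in its complement. Invoking the previous lemma, which gives $\diam \pi F^{m(N)}(\pi_pQ)\precsim \beta^{\frac{m(N)\alpha}{2\mathbb{E}R-4\epsilon}}=:d_0$, a straight-segment crossing argument in the product chart on $\partial Q\times S^1$ forces the whole set $\pi F^{m(N)}(\pi_pQ)$ to lie within distance $2d_0$ of $\partial(B_r(q)\times S^1)$. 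Taking the union over all such $Q$ gives
\[
\pi F^{m(N)}\pi_p \partial A\;\subseteq\; N_{2d_0}\bigl(\partial(B_r(q)\times S^1)\bigr),
\]
where $N_\rho(E)$ denotes the $\rho$-neighborhood of $E$ in $\mathcal{M}$.

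Second, I would estimate the measure of this tube. In the coordinates $(q,\phi)$ the set $\partial(B_r(q)\times S^1)$ consists of the two vertical fibres $\{q_1\}\times S^1$ and $\{q_2\}\times S^1$ where $q_1,q_2$ are the endpoints of the arc $B_r(q)\subset \partial Q$. Using the explicit formula $d\mu_{\mathcal{M}}=(2\Leb_{\partial Q}\partial Q)^{-1}\cos\phi\,d\phi\,dq$, a $2d_0$-neighborhood of each vertical fibre has $\mu_{\mathcal{M}}$-mass $\precsim d_0$, and summing the two contributions concludes the bound $\mu_{\mathcal{M}}(\pi F^{m(N)}\pi_p\partial A)\precsim \beta^{\frac{m(N)\alpha}{2\mathbb{E}R-4\epsilon}}$.

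There is no real obstacle here; the only subtlety worth stating carefully is that $\pi F^{m(N)}(\pi_pQ)$ need not be connected, so one should not appeal to a connectedness argument but only to the metric fact that a set of diameter $d_0$ containing points on both sides of the smooth boundary $\partial(B_r(q)\times S^1)$ must lie entirely within distance $d_0$ of that boundary. The rest is a direct computation in the product chart.
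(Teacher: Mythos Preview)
Your proposal is correct and follows essentially the same route as the paper's proof: the paper also picks, for each $Q\subseteq\partial A$, points $x,y\in Q$ with $\pi F^{m(N)}\pi_p x\in B_r(q)\times S^1$ and $\pi F^{m(N)}\pi_p y\notin B_r(q)\times S^1$, invokes the preceding diameter lemma, and concludes $\pi F^{m(N)}\pi_p Q\subseteq \big[B_{r+Cd_0}(q)\setminus B_{r-Cd_0}(q)\big]\times S^1$, which is exactly your tubular-neighborhood inclusion written in annulus form. Your observation that the constraint ``$\cap\, F^{m(N)}A_N^c$'' is automatic on $Q\subseteq \pi_p^{-1}A_N^c$ (so that $y\notin A$ forces $\pi F^{m(N)}\pi_p y\notin B_r(q)\times S^1$) is implicit in the paper but worth making explicit, and your remark that connectedness of $\pi F^{m(N)}\pi_p Q$ is not needed is apt.
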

\begin{proof}
For any $Q \subseteq \partial A$ there are $x,y \in Q \subseteq \pi_p^{-1}A_N^c$, such that \[\pi F^{m(N)}\pi_px \in \pi F^{m(N)}A_N^c \bigcap  B_r(q) \times S^1, \quad \pi F^{m(N)}\pi_py \notin \pi F^{m(N)}A_N^c \bigcap B_r(q) \times S^1\]
Further, there exists such constant $C>0$, that \[\diam(\pi F^{m(N)}\pi_px,\pi F^{m(N)}\pi_py)\le \diam \pi F^{m(N)}\pi_pQ \le C \beta^{\frac{m(N)\alpha}{2\mathbb{E}R-4\epsilon}}.\]

Therefore, \[\pi F^{m(N)}\pi_pQ \subseteq \Big[B_{r+C \beta^{\frac{m(N)\alpha}{2\mathbb{E}R-4\epsilon}}}(q) \setminus B_{r-C \beta^{\frac{m(N)\alpha}{2\mathbb{E}R-4\epsilon}}}(q)\Big]\times S^1,\]which implies that $\mu_{\mathcal{M}}(\pi F^{m(N)}\pi_p \partial A)\precsim \beta^{\frac{m(N)\alpha}{2\mathbb{E}R-4\epsilon}}$.
\end{proof}

We can estimate now a convergence rate for (\ref{poissoncorrelation}).

\begin{lemma}\label{rateforpoissoncorrelation}
Let $m(N)> (\mathbb{E}R+\epsilon)N, N>7, \epsilon< \mathbb{E}R/18$, then (\ref{poissoncorrelation}) $\precsim_T (p-2m(N))^{-\xi}+n^2\beta^{\frac{m(N)\alpha}{2\mathbb{E}R-4\epsilon}}$.
\end{lemma}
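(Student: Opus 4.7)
The proof follows a lift-approximate-decorrelate-sum strategy on the polynomial hyperbolic Young tower $(\Delta_p,F_p,\mu_{\Delta_p})$. Using $F\circ\pi_p=\pi_p\circ F_p$, $(\pi_p)_{*}\mu_{\Delta_p}=\mu_{\Delta}$, and the $F_p$-invariance of $\mu_{\Delta_p}$, the $l$-th summand of (\ref{poissoncorrelation}) equals
\[\Big|\int_{\Delta_p}\mathbbm{1}_{A}\cdot\tilde h_l\circ F_p^p\,d\mu_{\Delta_p}-\mu_{\Delta_p}(A)\int_{\Delta_p}\tilde h_l\,d\mu_{\Delta_p}\Big|,\]
where $\tilde h_l(\omega):=h(\mathbbm{1}_{A}(\omega),\mathbbm{1}_{A}(F_p\omega),\dots,\mathbbm{1}_{A}(F_p^l\omega))$ and $A$ is the set defined before Lemma \ref{boundarymeasure}. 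This reduces (\ref{poissoncorrelation}) to bounding $n-p+1\asymp n$ covariances on $\Delta_p$.

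Next, I would replace every occurrence of $\mathbbm{1}_A$ in the displayed expression by $\mathbbm{1}_{\overline A}$. Since $h\in[0,1]$ is evaluated on $\{0,1\}$-tuples and $\mu_{\Delta_p}$ is $F_p$-invariant, this substitution costs at most $(l+2)\,\mu_{\Delta_p}(\overline A\setminus A)$ in $L^1$: one term from the $\mathbbm{1}_A$ outside $\tilde h_l$, and $l+1$ from the arguments of $h$. Because $\overline A\setminus A\subseteq\partial A$, the measure-preserving identifications via $\pi$ and $\pi_p$ together with Lemma \ref{boundarymeasure} give $\mu_{\Delta_p}(\overline A\setminus A)\precsim\beta^{m(N)\alpha/(2\mathbb{E}R-4\epsilon)}$. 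Now $\mathbbm{1}_{\overline A}$ is $\mathcal{Q}_{2m(N)}$-measurable by construction of $\overline A$, and the function obtained by replacing $A$ with $\overline A$ in the definition of $\tilde h_l$ is $\mathcal{Q}_{2m(N)+l}$-measurable, hence $\sigma(\bigcup_k\mathcal{Q}_k)$-measurable and bounded by $1$. The decay estimate (\ref{polydecorrelation}) then applies with $m=p$ and $k=2m(N)$, which is legitimate under the standing assumption $p>2m(N)$, and yields a covariance bound of $C(p-2m(N))^{-\xi}\,\mu_{\Delta_p}(\overline A)\precsim(p-2m(N))^{-\xi}\,(\mu_{\Delta}(H_r)+\beta^{m(N)\alpha/(2\mathbb{E}R-4\epsilon)})$.

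Summing over $l=0,1,\dots,n-p$, the decorrelation error contributes at most $n\cdot C(p-2m(N))^{-\xi}(\mu_{\Delta}(H_r)+\beta^{m(N)\alpha/(2\mathbb{E}R-4\epsilon)})\precsim_T(p-2m(N))^{-\xi}$, because $n\,\mu_{\Delta}(H_r)\precsim_T 1$ by definition of $n$, while the approximation error contributes $\sum_{l=0}^{n-p}(l+2)\,\mu_{\Delta_p}(\overline A\setminus A)\asymp n^2\,\beta^{m(N)\alpha/(2\mathbb{E}R-4\epsilon)}$. Adding the two bounds gives the claimed estimate. The main obstacle is ensuring that $\overline A$ simultaneously (i) is a union of $\mathcal{Q}_{2m(N)}$-atoms, so that (\ref{polydecorrelation}) applies, and (ii) differs from $A$ only on a set whose image under $\pi F^{m(N)}$ sits inside a thin annular neighbourhood of $\partial B_r(q)\times S^1$; property (ii) is what Lemmas \ref{numberreturntoX} and \ref{boundarymeasure} deliver through the uniform hyperbolicity of $f^R$ combined with the maximal large deviation estimate for $R$ from Lemma \ref{mldforR}.
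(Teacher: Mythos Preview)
Your proposal is correct and follows essentially the same approach as the paper's own proof: lift to $(\Delta_p,F_p,\mu_{\Delta_p})$ via $\pi_p$, replace $\mathbbm{1}_A$ by $\mathbbm{1}_{\overline{A}}$ at cost $O(l)\,\mu_{\Delta_p}(\overline{A}\setminus A)$, apply the decay bound (\ref{polydecorrelation}) with $k=2m(N)$ to the resulting covariance, and then control $\mu_{\Delta_p}(\overline{A}\setminus A)\le\mu_{\Delta_p}(\partial A)\precsim\mu_{\mathcal{M}}(\pi F^{m(N)}\pi_p\partial A)$ through Lemma~\ref{boundarymeasure}. The paper carries out the same telescoping and the same two-term accounting (decorrelation $+$ boundary error), arriving at the identical final estimate after summing over $l$.
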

\begin{proof} It follows from (\ref{allpolymeasure}) that
\begin{align*}
   &\left| \mathbb{E}\Big[\mathbbm{1}_{X'_0=1}  h(X'_p,\cdots, X'_{p+l})\Big]-\mathbb{E}\mathbbm{1}_{X'_0=1}  \mathbb{E}h(X'_p,\cdots, X'_{p+l})\right|\\
   &=\left|\int \mathbbm{1}_{A}h(\mathbbm{1}_{A}\circ F_p^p, \cdots, \mathbbm{1}_{A}\circ F^{p+l}_p)d\mu_{\Delta_p}-\int \mathbbm{1}_{A}d\mu_{\Delta_p}\int h(\mathbbm{1}_{A}\circ F_p^p, \cdots, \mathbbm{1}_{A}\circ F^{p+l}_p)d\mu_{\Delta_p}\right|\\
   &=\Big|\int \mathbbm{1}_{\overline{A}}  h\left(\mathbbm{1}_{\overline{A}},\cdots,\mathbbm{1}_{\overline{A}}\circ F_p^{l}\right) \circ F_p^p d\mu_{\Delta_p}- \mu_{\Delta_p}(\overline{A}) \int h\left(\mathbbm{1}_{\overline{A}},\cdots,\mathbbm{1}_{\overline{A}}\circ F_p^{l}\right)  d\mu_{\Delta_p}\\
   & \quad -\int \mathbbm{1}_{\overline{A}\setminus A}  h\left(\mathbbm{1}_{\overline{A}},\cdots,\mathbbm{1}_{\overline{A}}\circ F_p^{l}\right) \circ F_p^p d\mu_{\Delta_p}+ \mu_{\Delta_p}(\overline{A}\setminus A) \int h\left(\mathbbm{1}_{\overline{A}},\cdots,\mathbbm{1}_{\overline{A}}\circ F_p^{l}\right)  d\mu_{\Delta_p}\\
   &\quad+\int \mathbbm{1}_{A} h\left(\mathbbm{1}_{A},\cdots, \mathbbm{1}_{A}\circ F_p^{l}\right) \circ F_p^p d\mu_{\Delta_p}-\int \mathbbm{1}_{A}  h\left(\mathbbm{1}_{\overline{A}},\cdots,\mathbbm{1}_{\overline{A}}\circ F_p^{l}\right) \circ F_p^p d\mu_{\Delta_p}\\
   &\quad-\int \mathbbm{1}_{A} d\mu_{\Delta_p}  \int h\left(\mathbbm{1}_{A},\cdots, \mathbbm{1}_{A}\circ F_p^{l}\right) \circ F_p^p d\mu_{\Delta_p}+\mu_{\Delta_p}(A) \int h\left(\mathbbm{1}_{\overline{A}},\cdots,\mathbbm{1}_{\overline{A}}\circ F_p^{l}\right)  d\mu_{\Delta_p}\Big|\\
   &\le \left|\int \mathbbm{1}_{\overline{A}}  h\left(\mathbbm{1}_{\overline{A}},\cdots,\mathbbm{1}_{\overline{A}}\circ F_p^{l}\right) \circ F_p^p d\mu_{\Delta_p}-\mu_{\Delta_p}(\overline{A})  \int h\left(\mathbbm{1}_{\overline{A}},\cdots,\mathbbm{1}_{\overline{A}}\circ F_p^{l}\right)  d\mu_{\Delta_p}\right|\\
   &\quad+2\mu_{\Delta_p}(\overline{A}\setminus A)+2\int \mathbbm{1}_{A}\mathbbm{1}_{\bigcup_{j \le l}F_p^{-j}\overline{A}\setminus A}\circ F_p^p d\mu_{\Delta_p}+ 2\mu_{\Delta_p}(A)  \int \mathbbm{1}_{\bigcup_{j \le l}F_p^{-j}\overline{A}\setminus A} d\mu_{\Delta_p}\\
   &\precsim \left|\int \mathbbm{1}_{\overline{A}}  h\left(\mathbbm{1}_{\overline{A}},\cdots,\mathbbm{1}_{\overline{A}}\circ F_p^{l}\right) \circ F_p^p d\mu_{\Delta_p}-\mu_{\Delta_p}(\overline{A}) \int h\left(\mathbbm{1}_{\overline{A}},\cdots,\mathbbm{1}_{\overline{A}}\circ F_p^{l}\right) d\mu_{\Delta_p}\right| +l\mu_{\Delta_p}(\overline{A}\setminus A)\\
   &\precsim \mu_{\Delta_p}(\overline{A})(p-2m(N))^{-\xi} +n\mu_{\Delta_p}(\partial A)\\
   & \precsim \mu_{\Delta_p}(A)(p-2m(N))^{-\xi}+(p-2m(N))^{-\xi}\mu_{\Delta_p}(\partial A)+n \mu_{\Delta_p}(\partial A)\\
   &\precsim \mu_{\Delta}(H_r)(p-2m(N))^{-\xi}+n\mu_{\Delta_p}(\pi_p^{-1}F^{-m(N)}\pi^{-1}\pi F^{m(N)}\pi_p \partial A)\\
   &\precsim_T n^{-1}(p-2m(N))^{-\xi}+n\mu_{\mathcal{M}}(\pi F^{m(N)}\pi_p \partial A) \precsim_T n^{-1}(p-2m(N))^{-\xi}+n \beta^{\frac{m(N)\alpha}{2\mathbb{E}R-4\epsilon}},
\end{align*}where the last ``$\precsim_T$" is due to Lemma \ref{boundarymeasure}.

Therefore, (\ref{poissoncorrelation}) $\precsim_T n n^{-1}(p-2m(N))^{-\xi}+n^2\beta^{\frac{m(N)\alpha}{2\mathbb{E}R-4\epsilon}}\precsim_T (p-2m(N))^{-\xi}+n^2\beta^{\frac{m(N)\alpha}{2\mathbb{E}R-4\epsilon}}$.
\end{proof}

\subsubsection{Convergence rates for (\ref{shortreturn})}
In order to estimate convergence rate for (\ref{shortreturn}) we will study first the return statistic. Throughout this subsection we define for any $x,y \in \mathcal{M}$, \[\pmb{\pi}_X(x):=\pi_X(\pi^{-1}x), \quad \pmb{d}_{\partial Q}(x,y):=d_{\partial Q}(\pi_{\partial Q} x, \pi_{\partial Q}y).\]

Since the (un)stable manifolds are monotone curves in $X$, we denote the distance between $x\in X$ and the left endpoint of $\gamma^k(x)$ by $\diam_l \gamma^k(x)$, and the distance between $x$ and the right endpoint of $\gamma^k(x)$ by $\diam_r \gamma^k(x)$, where $k=u,s$.

For any non-negative numbers $j \le i$  denote
\[\Lambda_{r,i}:=\{x\in \mathcal{M}: \pmb{d}_{\partial Q}(x, f^{-i}x) \le r\},\]
\[\Lambda_{r,i,j}:=\{x\in \mathcal{M}: \pmb{d}_{\partial Q}(x, f^{-i}x) \le r, \text{ the orbit }x, f^{-1}x, \cdots, f^{-i}x \text{ visits } X \text{ exactly }j\text{ times} \}.\]

Observe that  $\Lambda_{r,i}:=\bigcup_j \Lambda_{r,i,j}$, and $\mu_{\mathcal{M}}(\Lambda_{r,i})=\sum_{j\le i}\mu_{\mathcal{M}}(\Lambda_{r,i, j})$. 

\begin{lemma}\label{lentghofmfd}
 There exists a constant $C>0$, such that $M\in \mathbb{N}$ for any $h>0$, and for $k=u,s$ we have 
 \[\mu_{X}\Big\{N_{h}\Big[\bigcup_{0 \le i \le M}(f^R)^{-i}\mathbb{S}\Big]\Big\} \precsim_M  h^{g\alpha^M},\]\[\mu_{X}(x \in X: \diam_l \gamma^k(x)<h) \le C h^g, \quad \mu_{X}(x \in X: \diam_r \gamma^k(x)<h) \le C h^g,\]
 where $g, \alpha$ are the same as in the Assumption \ref{assumption}, and $\gamma^u, \gamma^s$ are unstable and stable manifolds (see Definition \ref{unstablecone}).
\end{lemma}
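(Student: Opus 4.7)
My plan is to prove the three assertions together, starting with parts 2 and 3 (the diameter bounds for $\gamma^k$), and then handling part 1 by induction on $M$ using a similar reduction scheme.

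For parts 2 and 3: if $\diam_l \gamma^k(x) < h$, then $x$ lies within distance $h$ of the left endpoint of its (un)stable manifold. By Definition \ref{unstablecone}, this endpoint lies on $\bigcup_{i \ge 0}(f^R)^{\pm i}\mathbb{S}$. The dominant contribution is when the endpoint lies on $\mathbb{S}$ itself, so that $x \in N_h(\mathbb{S})$ and Assumption \ref{assumption} item 6 yields the bound $C h^g$ directly. The remaining cases, where the endpoint sits on a strictly higher iterate $(f^R)^{\pm j}\mathbb{S}$, are of lower order once we push them back to $\mathbb{S}$ using the same iterative scheme described below for part 1; since within a single return time Assumption \ref{assumption} item 3 keeps diameters comparable up to the Hölder exponent $\alpha$, these contributions are dominated by the top term.

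For part 1 I argue by induction on $M$. The base case $M = 0$ is Assumption \ref{assumption} item 6. For the inductive step, write
\begin{equation*}
N_h\Big[\bigcup_{0 \le i \le M}(f^R)^{-i}\mathbb{S}\Big] \subseteq N_h\Big[\bigcup_{0 \le i \le M-1}(f^R)^{-i}\mathbb{S}\Big] \cup N_h[(f^R)^{-M}\mathbb{S}].
\end{equation*}
By the inductive hypothesis the first term has measure $\precsim_{M-1} h^{g\alpha^{M-1}}$, which is no worse than $h^{g\alpha^{M}}$ when $h \le 1$ and $\alpha \in (0,1]$. It remains to bound the second term. Given $x \in N_h((f^R)^{-M}\mathbb{S})$, pick $y$ with $d(x,y) < h$ and $(f^R)^M y \in \mathbb{S}$. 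Using the local product structure of the hyperbolic system on $X$, I decompose the displacement from $x$ to $y$ into an unstable and a stable component, each of length at most a constant times $h$. Iterating the Hölder estimates in Assumption \ref{assumption} item 3 (second bullet) through $M$ returns, the image displacement satisfies $d((f^R)^M x, (f^R)^M y) \le C_M h^{\alpha^M}$, so $(f^R)^M x \in N_{C_M h^{\alpha^M}}(\mathbb{S})$. By the invariance of $\mu_X$ under $f^R$ together with Assumption \ref{assumption} item 6,
\begin{equation*}
\mu_X\big(N_h((f^R)^{-M}\mathbb{S})\big) \le \mu_X\big(N_{C_M h^{\alpha^M}}(\mathbb{S})\big) \precsim_M h^{g \alpha^M},
\end{equation*}
completing the induction.

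The main obstacle I anticipate is the fact that Assumption \ref{assumption} item 3 only gives Hölder control along (un)stable curves, while the displacement from $x$ to $y$ is an arbitrary two-dimensional displacement in $X$. Using the product structure of $X$ and the transversality of singularity curves to (un)stable cones from Assumption \ref{assumption} item 8, I would split this displacement along the two directions and apply the Hölder bound to each piece separately. Care must be taken because each application of $f^R$ may cross the singular partition $X = \bigcup_i X_i$, so the constants $C_M$ accumulate through $M$ iterations; fortunately they remain finite for fixed $M$, which is all that the stated bound requires.
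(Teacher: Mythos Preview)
Your argument for the second and third displayed estimates has a genuine gap. When $\diam_l\gamma^u(x)<h$ and the endpoint of $\gamma^u(x)$ lies on $(f^R)^{j}\mathbb{S}$ for some $j\ge 1$, you need an \emph{upper} bound on the length of the pulled-back segment $(f^R)^{-j}$ applied to the short piece of $\gamma^u(x)$. Assumption~\ref{assumption}(3) only bounds diameters of \emph{forward} images of (un)stable curves; inverting $\diam f^{R}\pmb{\gamma^u}\le C(\diam\pmb{\gamma^u})^{\alpha}$ yields a lower bound $(\diam\pmb{\gamma^u})\ge (H/C)^{1/\alpha}$, not the upper bound you need. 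The correct mechanism is the uniform hyperbolicity in Definition~\ref{cmz}(1): along $\gamma^u$ the map $(f^R)^{-j}$ contracts distances by $C\beta^{j}$, so $(f^R)^{-j}x\in N_{C\beta^{j}h}(\mathbb{S})$, whence
\[
\mu_X\{x:\diam_l\gamma^u(x)<h\}\le\sum_{j\ge 0}\mu_X\big(N_{C\beta^{j}h}(\mathbb{S})\big)\precsim\sum_{j\ge 0}(\beta^{j}h)^{g}\precsim h^{g}.
\]
If you instead tried to get a bound of the form $h^{g\alpha^{j}}$ for the $j$-th term (as your reference to the ``iterative scheme'' for part~1 suggests), the sum over $j$ would diverge and you could not recover the clean power $h^{g}$.

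For the first displayed estimate your induction is a valid strategy, but the paper proceeds differently and avoids the product-structure step you flag as an obstacle. Rather than connecting $x$ to an arbitrary nearby point $y$ with $(f^R)^{M}y\in\mathbb{S}$ and then decomposing the displacement, the paper works with the genuine manifolds $\gamma^{u}(x),\gamma^{s}(x)$: either one of them is shorter than $Ch$ on some side (which, by the estimate just proved, costs at most $Ch^{g}$), or both are long enough that by the uniform transversality in Assumption~\ref{assumption}(8) one of them must actually intersect $\bigcup_{0\le i\le M}(f^R)^{-i}\mathbb{S}$ at a point $y$ lying on the manifold of $x$ itself. Then the H\"older bound of Assumption~\ref{assumption}(3) applies legitimately along that manifold through $i$ returns, placing $(f^R)^{i}x$ in $N_{C_{M}h^{\alpha^{M}}}(\mathbb{S})$. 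Your version requires the curve from $x$ to $y$ to stay inside $\mathbb{S}^{c}\cap X$ through all $M$ returns, which is exactly what may fail when $y$ is chosen as an arbitrary nearby point on the singular set rather than on $\gamma^{u}(x)$ or $\gamma^{s}(x)$.
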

\begin{proof}
By the Assumption \ref{assumption}, $\mu_X(N_{\epsilon}(\mathbb{S}))\le C \epsilon^g$. It is known that $\gamma^u \subsetneq (\bigcup_{i \ge 0}(f^R)^{i}\mathbb{S})^c$. Let $\diam_l \gamma^u(x)<h$. Then $\gamma^u(x)$ has a left endpoint in $(f^R)^i\mathbb{S}$ for some $i\ge 0$, i.e., $(f^R)^{-i}\gamma^u(x)$ has an endpoint in $\mathbb{S}$. The distance between this endpoint and $(f^R)^{-i}x$ is at most $C\beta^ih$ due to the Definition \ref{cmz}. In other words, $x\in (f^R)^iN_{C\beta^ih}(\mathbb{S})$ for some $i\ge 0$. Therefore
\[\mu_{X}(x\in X: \diam_l \gamma^u(x)<h) \le \mu_X\Big(\bigcup_{i\ge 0}(f^R)^iN_{C\beta^ih}\mathbb{S}\Big)\le \sum_{i\ge 0}\mu_X(N_{C\beta^ih}(\mathbb{S}))\precsim h^g.\]

The arguments for other cases are similar. Therefore we omit the corresponding proofs. 

Let $x\in N_{h}(\bigcup_{0 \le i \le M}(f^R)^{-i}\mathbb{S})$. Since one of the $\gamma^u(x)$, $\gamma^s(x)$ uniformly transverses  the curves in $(\bigcup_{0 \le i \le M}(f^R)^{-i}\mathbb{S})$ (see Assumption \ref{assumption}), then there exists a constant $C>0$, such that if $\diam_l \gamma^k(x)\ge Ch$, and $\diam_r \gamma^k(x)\ge Ch$ ($k=u$ and $s$), then one of the $\gamma^u(x), \gamma^s(x)$ must intersect $(\bigcup_{0 \le i \le M}(f^R)^{-i}\mathbb{S})$. 

Let $\gamma^u(x)$ intersect $(f^R)^{-i}\mathbb{S}$ (for some $i \in [0,M]$) at the point $y$. Denote the  unstable disk connecting $x$ and $y$ by $\gamma^u_{x,y} \subseteq \gamma^u(x)$. Then $\diam \gamma^u_{x,y} \le C'' h$ for some $C''>0$, and $(f^R)^{i}y \in \mathbb{S}$. By Assumption \ref{assumption}  there is a constant $C_M>0$, such that \[\diam (f^R)^i\gamma^u_{x,y} \precsim (\diam \gamma^u_{x,y})^{\alpha^i}\le C_M h^{\alpha^M}, \quad (f^R)^i x \in N_{C_M h^{\alpha^M}}(\mathbb{S}).\]

Let now $\gamma^s(x)$ intersect $(f^R)^{-i}\mathbb{S}$ (for some $i \in [0,M]$) at the point $y$. Denote the stable disk connecting $x$ and $y$ by $\gamma^s_{x,y} \subseteq \gamma^s(x)$. Then  $\diam \gamma^s_{x,y} \le C''' h$ for some $C'''>0$, and $(f^R)^{i}y \in \mathbb{S}$. According to the Definition \ref{cmz} there exists a constant $C'>0$, such that \[\diam (f^R)^i\gamma^u_{x,y} \precsim (\diam \gamma^u_{x,y})\le C' h, \quad (f^R)^i x \in N_{C' h}(\mathbb{S}).\]

By argueing as before we get that there is a constant $C_M'>0$, such that \[N_{h}\Big(\bigcup_{0 \le i \le M}(f^R)^{-i}\mathbb{S}\Big) \subseteq \bigcup_{j=r,l}\bigcup_{k=u,s} \{ x \in X:\diam_j \gamma^k(x)< Ch \}\bigcup \bigcup_{0\le i\le M}(f^R)^{-i}N_{C'_M h^{\alpha^M}}(\mathbb{S}).\]

Therefore, \begin{align*}
    \mu_X\Big(N_{h}(\bigcup_{0 \le i \le M}(f^R)^{-i}\mathbb{S})\Big) &\le \mu_X \Big(\bigcup_{j=r,l}\bigcup_{k=u,s} \{ x \in X:\diam_j \gamma^k(x)< Ch \}\Big)\\
    &\quad + \mu_X\Big(\bigcup_{0\le i\le M}(f^R)^{-i}N_{C'_M h^{\alpha^M}}(\mathbb{S})\Big)\\
    &\precsim_M h^{g}+ h^{g\alpha^M} \precsim_M h^{g\alpha^M},
\end{align*} which concludes a proof.
\end{proof}

Now we can estimate a measure of $\pmb{\pi}_X\Lambda_{r,i,j}$.

\begin{lemma}\label{1}
There is $J_K>0$ such that for any $i\ge j > J_K$, $\mu_X(\pmb{\pi}_X\Lambda_{r,i,j})\precsim_K r^{g/(g+1)}$, where $K, g$ are defined in Assumption \ref{assumption}.
\end{lemma}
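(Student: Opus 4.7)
The plan is to unpack $\pmb{\pi}_X\Lambda_{r,i,j}$ into a quantitative near-return condition and then balance two complementary estimates at a single scale $\epsilon$. For $y\in\pmb{\pi}_X\Lambda_{r,i,j}$ I fix a lift $x\in\pi^{-1}\Lambda_{r,i,j}$ with $\pi_X x=y$, and write $x=f^{n_1}y$ with $0\le n_1<R(y)$, and $f^{-i}x=f^{-n_2}(f^R)^{-(j-1)}y$ for some $0\le n_2<R((f^R)^{-j}y)$; the defining inequality in $\Lambda_{r,i,j}$ becomes
\[
d_{\partial Q}\bigl(\pi_{\partial Q}f^{n_1}y,\ \pi_{\partial Q}f^{-n_2}(f^R)^{-(j-1)}y\bigr)\le r.
\]

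With this reformulation I would split $\pmb{\pi}_X\Lambda_{r,i,j}$ at a scale $\epsilon\in(0,1)$: the piece inside $N_\epsilon(\mathbb{S})$ is handled by Assumption \ref{assumption}(6), giving measure $\precsim\epsilon^g$, while a thin additional piece near $\bigcup_{k\le K+1}(f^R)^{-k}\mathbb{S}$ is absorbed by Lemma \ref{lentghofmfd}. On the remaining complement I foliate by unstable manifolds, parameterize each $\gamma^u$ by arclength, and estimate the $\partial Q$-projection speeds of the two maps $t\mapsto\pi_{\partial Q}f^{n_1}y(t)$ and $t\mapsto\pi_{\partial Q}f^{-n_2}(f^R)^{-(j-1)}y(t)$. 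The first has speed bounded below by a constant depending only on $K$: by Assumption \ref{assumption}(4), (5), (7) the $f^{n_1}$-image of $\gamma^u$ is geometrically comparable to $(f^R)^K\gamma^u$, whose unstable expansion is at least of order $\beta^{-K}$ by the Hyperbolicity axiom of Definition \ref{cmz}, and the bounded slope assumption transfers this bound to the $\partial Q$-projection. The second has speed $\precsim_K\beta^{j-1}$ by the unstable contraction of $(f^R)^{-(j-1)}$. Choosing $J_K$ so large that $\beta^{J_K-1}\ll\beta^{-K}$, for all $j>J_K$ the difference has speed $\gtrsim_K 1$, so the set of $t$ satisfying the condition has arclength $\precsim_K r/\epsilon$; integration over the transverse stable direction then produces a measure bound $\precsim_K r/\epsilon$ on the complement.

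Balancing $\epsilon^g$ against $r/\epsilon$ at $\epsilon=r^{1/(g+1)}$ yields the claimed bound $\mu_X(\pmb{\pi}_X\Lambda_{r,i,j})\precsim_K r^{g/(g+1)}$. The main obstacle is the uniformity in the two in-block times $(n_1,n_2)$, which range over $[0,R(y))$ and $[0,R((f^R)^{-j}y))$ respectively and can be as large as the unbounded return time $R$: a naive union bound over all such pairs blows up, because $R$ is only $L^{1+\xi}$-integrable. Assumption \ref{assumption}(5) is precisely what rescues the argument, since it makes the speed lower bound on $\gamma^u$ effectively independent of $n_1$ (and a symmetric estimate on the contracted side handles $n_2$), so that the various $(n_1,n_2)$-branches for each $y$ contribute comparably; the threshold $J_K$ is then fixed so that the stable contraction $\beta^{j-1-K}$ dominates the geometric factors picked up during the $K$ ``transition'' steps on each end, producing an estimate genuinely uniform in $i\ge j>J_K$.
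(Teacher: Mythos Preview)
Your overall strategy coincides with the paper's: obtain a leafwise arclength bound on each unstable manifold $\gamma^u$ and then balance it against the measure of a ``bad'' set at a scale parameter. The leafwise part (speed of the first map $\gtrsim_K 1$ via Assumptions~\ref{assumption}(4),(7); speed of the second $\precsim_K\beta^{\,j-K}$ via Assumption~\ref{assumption}(5) plus hyperbolicity; difference $\gtrsim_K 1$ once $j>J_K$) is morally the same argument the paper uses. The paper packages it more cleanly as a self-referential inequality: for two points $x,y\in f^k\gamma^u\cap\Lambda_{r,i,j}$ it writes
\[
d_{\gamma^u}(\pmb{\pi}_Xx,\pmb{\pi}_Xy)\ \le\ 2Cr+C\beta^{\,j-K}\,d_{\gamma^u}(\pmb{\pi}_Xx,\pmb{\pi}_Xy),
\]
using Assumption~\ref{assumption}(4) to pass from $f^k\gamma^u$ back to $\gamma^u$ and Assumption~\ref{assumption}(5) plus the hyperbolicity in Definition~\ref{cmz} for the contracted term; choosing $J_K$ with $C\beta^{J_K-K}<1$ then gives $\Leb_{\gamma^u}(\pmb{\pi}_X\Lambda_{r,i,j})\precsim_K r$ directly, without parameterizing and differentiating.

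The genuine gap in your proposal is the splitting. You remove $N_\epsilon(\mathbb{S})$ together with a neighborhood of $\bigcup_{k\le K+1}(f^R)^{-k}\mathbb{S}$, but unstable manifolds are connected components of the complement of the \emph{forward} images $\bigcup_{i\ge 0}(f^R)^{i}\mathbb{S}$ (Definition~\ref{unstablecone}), not of backward ones; on your complement $\gamma^u(x)$ can still be arbitrarily short. Consequently the step ``integration over the transverse stable direction produces $\precsim_K r/\epsilon$'' has no basis: to convert the leafwise arclength bound into a conditional u-SRB bound you need $\diam\gamma^u\ge h$ so that $\mu_{\gamma^u}\approx\Leb_{\gamma^u}/\diam\gamma^u$ yields $\precsim_K r/h$. (Relatedly, your line ``arclength $\precsim_K r/\epsilon$'' is inconsistent with your own speed calculation, which gives arclength $\precsim_K r$; the factor $1/\epsilon$ can only enter at the normalization step.) The paper's fix is exactly this: split at $\{\diam\gamma^u(x)<h\}$, use Lemma~\ref{lentghofmfd} to get $\mu_X(\diam\gamma^u<h)\precsim h^g$, bound the long-leaf part by $h^{-1}r$, and optimize at $h=r^{1/(g+1)}$. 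Once you replace your $N_\epsilon(\mathbb{S})$ splitting by this one, your argument becomes the paper's argument.
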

\begin{proof}
For any $\gamma^u \subseteq X$, any $k < R|_{\gamma^u}$, and any $x,y \in f^k\gamma^u \bigcap \Lambda_{r,i,j}$ we have \[\pmb{d}_{\partial Q}(x,y) \le \pmb{d}_{\partial Q}(f^{-i}x,x)+\pmb{d}_{\partial Q}(f^{-i}y,y)+\pmb{d}_{\partial Q}(f^{-i}x,f^{-i}y) \le 2r+ \pmb{d}_{\partial Q}(f^{-i}x,f^{-i}y).  \]

By Assumption \ref{assumption} the slops of $f^k\gamma^u, f^{-i}f^k \gamma^u$ are uniformly bounded. Therefore
\[d_{f^k\gamma^u}(x,y) \precsim 2r+ d_{f^{-i}f^k\gamma^u}(f^{-i}x,f^{-i}y).  \]

Also by Assumption \ref{assumption}, if  $i \ge j > K+2$, then  \[d_{\gamma^u}(\pmb{\pi}_Xx,\pmb{\pi}_X y) \precsim d_{f^k\gamma^u}(x,y), \]
\[d_{f^{-i}f^k\gamma^u}(f^{-i}x,f^{-i}y) \precsim d_{(f^R)^K\pmb{\pi}_Xf^{-i}f^k\gamma^u}((f^R)^K\pmb{\pi}_Xf^{-i}x,(f^R)^K\pmb{\pi}_Xf^{-i}y)\precsim \beta^{j-K}d_{\gamma^u}(\pmb{\pi}_Xx,\pmb{\pi}_X y)\],
where the last ``$\precsim$" is due to the CMZ structure (see the Definition \ref{cmz}).

Therefore, there exists a constant $C>0$, such that \[d_{\gamma^u}(\pmb{\pi}_Xx,\pmb{\pi}_X y) \le 2Cr + C\beta^{j-K} d_{\gamma^u}(\pmb{\pi}_Xx,\pmb{\pi}_X y). \]

Then there is a constant $J_K>K+2$, such that $C\beta^{j-K}<C\beta^{J_K-K}< 1$ for any $j \ge J_K$, and \[d_{\gamma^u}(\pmb{\pi}_Xx,\pmb{\pi}_X y) \le \frac{2Cr}{1-C\beta^{J_K-K}}\]

By Definition \ref{cmz} all $\gamma^u$ have uniformly bounded curvatures. Hence
\[ \Leb_{\gamma^u}(\pmb{\pi}_X\Lambda_{r,i,j})\precsim_{K} r.\]

 From Lemma \ref{lentghofmfd} and the fact that the family of unstable manifold $\gamma^u$ is a measurable partition of $X$, we obtain \begin{align*}
  \mu_X(\pmb{\pi}_X\Lambda_{r,i,j})&=\int_{\diam \gamma^u(x)< h}\mu_{\gamma^u(x)}(\pmb{\pi}_X\Lambda_{r,i,j})d\mu_X +\int_{\diam \gamma^u(x)\ge h}\mu_{\gamma^u(x)}(\pmb{\pi}_X\Lambda_{r,i,j})d\mu_X \\
  &\precsim \mu_X(x \in X: \diam \gamma^u(x)<h)+\int_{\diam \gamma^u(x)\ge h}\mu_{\gamma^u(x)}(\pmb{\pi}_X\Lambda_{r,i,j})d\mu_X \\
  &\precsim h^g+\int_{\diam \gamma^u(x)\ge h}\frac{\Leb_{\gamma^u(x)}(\pmb{\pi}_X\Lambda_{r,i,j})}{\Leb_{\gamma^u(x)}(\gamma^u(x))}d\mu_X \\
  &\precsim_{K} h^g+h^{-1}r \precsim_K r^{g/(1+g)},
\end{align*}where was choosen $h=r^{1/(1+g)}$.
\end{proof}

 This lemma allows to simplify (\ref{shortreturn}). Recall that $S_r$ is a section in the $s$-quasi-section $B_r(q) \times S^1$ (see Definition \ref{quasisection}). Let
\begin{align*}
    \bar{\Lambda}_{r,j}(q):=&\{x\in S_r\subseteq B_r(q) \times S^1: \text{there is }i \ge 0 \text{ such that the orbit }x, fx, \cdots, f^{i}x \text{ visits } X\\
    &\text{ exactly }j\text{ times and }f^i(x) \in S_r \}.
\end{align*}

\begin{lemma}\label{2}
The value of (\ref{shortreturn}) equals $n\cdot \mathbb{E} \left(\mathbbm{1}_{X'_0=1}  \mathbbm{1}_{\sum_{1\le j \le p-1}X'_j\ge 1}\right)\le n\sum_{i\le p}\mathbb{E}\mathbbm{1}_{B_r(q)\times S^1} \mathbbm{1}_{B_r(q)\times S^1} \circ f^i$ \[\precsim_{q,K,T} p \cdot r^{s}+n\cdot p\cdot \sup_{1\le j \le J_K+1} \mathbb{E}\mathbbm{1}_{\pmb{\pi}_X\bar{\Lambda}_{3r,j}(q)} \mathbbm{1}_{ \pmb{\pi}_XS_{3r}}+n \cdot \sum_{J_K\le i\le p}\sum_{j > J_K}\mathbb{E}\mathbbm{1}_{\pmb{\pi}_X S_r} \mathbbm{1}_{\pmb{\pi}_X\Lambda_{2r,i, j}}.\]
\end{lemma}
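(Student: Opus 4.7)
The first two inequalities in the displayed chain are bookkeeping: since $X_0'=\mathbbm{1}_{F^{m(N)}A_N^c\cap H_r}\le \mathbbm{1}_{H_r}$ and $\mathbbm{1}_{\sum_{1\le j\le p-1}X_j'\ge 1}\le \sum_{j=1}^{p-1}X_j'$, the measure-preserving bijection $\pi:\Delta\to\mathcal{M}$ (with $f\circ\pi=\pi\circ F$) transfers the bound from $\Delta$ to $\mathcal{M}$ and produces $n\sum_{i\le p}\mathbb{E}\mathbbm{1}_{B_r(q)\times S^1}\cdot\mathbbm{1}_{B_r(q)\times S^1}\circ f^i$. The substantive part is the final ``$\precsim_{q,K,T}$'' step, which I will handle in three stages.

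First, I peel off the quasi-section defect. Writing $\mathbbm{1}_{B_r(q)\times S^1}=\mathbbm{1}_{S_r}+\mathbbm{1}_{B_r(q)\times S^1\setminus S_r}$ and expanding the product, the three ``non-section'' contributions are each bounded, via $f$-invariance of $\mu_{\mathcal{M}}$, by $\mu_{\mathcal{M}}(B_r(q)\times S^1\setminus S_r)\precsim_q r^{1+s}$ (Definition \ref{quasisection} and Assumption \ref{assumption}(1)). Since $n\approx_{q,T} r^{-1}$, summing over $i\le p$ produces the first summand $p\cdot r^{s}$.

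Second, for the leftover integrand $\mathbb{E}\mathbbm{1}_{S_r}\cdot\mathbbm{1}_{S_r}\circ f^i$, I partition each summand according to the number $j\ge 1$ of visits of the orbit $x,fx,\dots,f^ix$ to $X$. In the short-return regime $1\le j\le J_K+1$, the relevant set $\{x\in S_r:f^ix\in S_r,\text{ orbit visits }X\text{ exactly }j\text{ times}\}$ lies inside $\bar{\Lambda}_{r,j}(q)\subseteq \bar{\Lambda}_{3r,j}(q)$; since $S_r$ is a section, $\pmb{\pi}_X$ is essentially injective on $\pi^{-1}S_r$, and the $\mathcal{M}$-measure is comparable, up to a bounded factor, to $\mu_X(\pmb{\pi}_X\bar{\Lambda}_{3r,j}\cap \pmb{\pi}_XS_{3r})$, with the $3r$-enlargement absorbing the discrepancy between $S_r$ and $\pmb{\pi}_X S_r$. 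Taking the supremum in $j$ and using that at most $p$ indices $i$ contribute gives the second summand.

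Third, for the long-return regime $j>J_K$, both $x$ and $f^ix$ lie in $S_r\subseteq B_r(q)\times S^1$, so $\pmb{d}_{\partial Q}(x,f^ix)\le 2r$; setting $y:=f^ix$ places $y$ in $\Lambda_{2r,i,j}$ (note that the backward orbit of $y$ visits $X$ the same $j$ times as the forward orbit of $x$). Projecting by $\pmb{\pi}_X$ and summing over $j>J_K$ and $J_K\le i\le p$ yields the third summand. The main obstacle is the clean translation of set inclusions on $\mathcal{M}$ into estimates on $X$ via $\pmb{\pi}_X$, and the justification of the small buffers $2r,3r$: they must simultaneously absorb the quasi-section defect and the mismatch between the ambient distance $\pmb{d}_{\partial Q}$ and the induced dynamics of $f^R$ on $X$. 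The threshold $J_K$ is forced upon us by Lemma \ref{1}, whose proof needs the hyperbolic contraction of $(f^R)^K$ along stable manifolds to dominate, which only becomes effective once the orbit has completed at least $J_K$ iterates of $f^R$.
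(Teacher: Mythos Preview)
Your three-part decomposition (quasi-section defect, short $X$-returns $j\le J_K$, long $X$-returns $j>J_K$) matches the paper's proof, and your handling of the first and third parts is essentially the same as the paper's---the paper simply reverses time at the outset via $f_*\mu_{\mathcal M}=\mu_{\mathcal M}$, passing to $f^{-i}$ and the sets $\Lambda_{2r,i}$, which amounts to your change of variable $y=f^ix$ in stage three.

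The one genuine gap is in your short-return stage. You assert $\bar\Lambda_{r,j}(q)\subseteq\bar\Lambda_{3r,j}(q)$, but this inclusion can fail: by Definition~\ref{quasisection} the sections $S_r\subseteq B_r(q)\times S^1$ and $S_{3r}\subseteq B_{3r}(q)\times S^1$ are chosen independently, so there is no reason that $S_r\subseteq S_{3r}$, and hence a point $x$ with $x,f^ix\in S_r$ need not satisfy $x,f^ix\in S_{3r}$. Your phrase ``the $3r$-enlargement absorbing the discrepancy between $S_r$ and $\pmb\pi_XS_r$'' does not address this; the actual discrepancy is between $S_r$ and $S_{3r}$. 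The repair is cheap: for each fixed $i$ one has $\{x\in S_r:f^ix\in S_r,\ j\text{ visits}\}\subseteq \bar\Lambda_{3r,j}(q)\cup\big[(B_{3r}\times S^1)\setminus S_{3r}\big]\cup f^{-i}\big[(B_{3r}\times S^1)\setminus S_{3r}\big]$, and the two defect pieces contribute another $O_q(p\cdot r^{s})$ after multiplying by $n$. The paper takes a different (longer) route here: it records $f^{-i}x\in B_{3r}(q)\times S^1$, translates this to $(f^R)^{-j}\pmb\pi_Xx\in\pmb\pi_X(B_{3r}(q)\times S^1)$ (with a $\pm1$ fencepost when $f^{-i}x\in X$), uses $(f^R)_*\mu_X=\mu_X$ to swap the iterate, enlarges $S_r$ to $B_{3r}(q)\times S^1$, and only then applies the quasi-section property at scale $3r$; the resulting inclusion $\pmb\pi_XS_{3r}\cap(f^R)^{-j}\pmb\pi_XS_{3r}\subseteq\pmb\pi_X\bar\Lambda_{3r,j}(q)\cup\pmb\pi_X\bar\Lambda_{3r,j+1}(q)$ is why the supremum in the statement runs to $J_K+1$ rather than $J_K$.
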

\begin{proof}From the relation $f_{*} \mu_{\mathcal{M}}=\mu_{\mathcal{M}}$ we have 
\begin{align*}
    \mathbb{E}& \left(\mathbbm{1}_{X'_0=1}  \mathbbm{1}_{\sum_{1\le i \le p-1}X'_i\ge 1}\right) \le \sum_{i\le p} \mathbb{E}\mathbbm{1}_{X'_0=1} \mathbbm{1}_{X'_i=1}\\
    &\le \sum_{i\le p}\mathbb{E}\mathbbm{1}_{B_r(q)\times S^1} \mathbbm{1}_{B_r(q)\times S^1} \circ f^i\\
    &=\sum_{i\le p}\mathbb{E}\mathbbm{1}_{B_r(q)\times S^1} \mathbbm{1}_{B_r(q)\times S^1} \circ f^{-i}\\
    &\le \sum_{i\le p}\mathbb{E}\mathbbm{1}_{B_r(q)\times S^1} \mathbbm{1}_{\Lambda_{2r,i}}\\
    & \precsim_q p \cdot r^{s+1}+\sum_{i\le p}\mathbb{E}\mathbbm{1}_{\pi S_{r}} \mathbbm{1}_{\Lambda_{2r,i}}\\
    & \precsim_q p \cdot r^{s+1}+\sum_{i\le p}\mathbb{E}\mathbbm{1}_{ S_r} \mathbbm{1}_{\bigcup_{j \le J_K}\Lambda_{2r,i, j}}+\sum_{i\le p}\sum_{j \ge J_K}\mathbb{E}\mathbbm{1}_{ S_r} \mathbbm{1}_{\Lambda_{2r,i, j}}\\
    &\precsim_q p\cdot r^{s+1}+ \sum_{i\le p}\mathbb{E}\mathbbm{1}_{ S_r} \mathbbm{1}_{\bigcup_{j \le J_K}\Lambda_{2r,i, j}}+\sum_{J_K\le i\le p}\sum_{j > J_K}\mathbb{E}\mathbbm{1}_{ S_r} \mathbbm{1}_{\Lambda_{2r,i, j}}\\
    & \precsim_q p\cdot r^{1+s}+\sum_{i\le p}\mathbb{E}\mathbbm{1}_{ S_r} \mathbbm{1}_{\bigcup_{j \le J_K}\Lambda_{2r,i, j}}+\sum_{J_K\le i\le p}\sum_{j > J_K}\mathbb{E}\mathbbm{1}_{\pmb{\pi}_X S_r} \mathbbm{1}_{\pmb{\pi}_X\Lambda_{2r,i, j}},
    \end{align*}where the last two ``$\precsim_q$" are due to the inequality $i \ge j >J_k$, and the fact that $S_r$ is a section in the $s$-quasi-section $B_r(q) \times S^1$. For any $x \in  S_r \bigcap \Lambda_{2r,i, j}$ and for some $j \le J_K$ we have $\pmb{d}_{\partial Q}(x, f^{-i}x) \le 2r$, and $f^{-i}x \in B_{2r}(\pi_{\partial Q}x) \times S^1 \subseteq B_{3r}(q) \times S^1$. Therefore \[\pmb{\pi}_Xx\in \pmb{\pi}_X S_r,\quad (f^R)^{-j}\pmb{\pi}_Xx \in \pmb{\pi}_XB_{3r}(q)\times S^1 \text{ or }(f^R)^{-j+1}\pmb{\pi}_Xx \in \pmb{\pi}_XB_{3r}(q)\times S^1,\]
    where the second case (with ``$-j+1$") only happens if $ f^{-i}x\in X$ and $j\ge 2$.
    
    Since $S_r$ is a section, and $(f^R)_{*}\mu_X=\mu_X$, if $ S_r \bigcap \Lambda_{2r,i,j}\neq \emptyset$,  then $j \ge 1$, and  \begin{align*}
    \mathbb{E}\mathbbm{1}_{ S_r} \mathbbm{1}_{\Lambda_{2r,i, j}} &\precsim \mathbb{E}\mathbbm{1}_{\pmb{\pi}_X S_r} \mathbbm{1}_{(f^R)^{j} \pmb{\pi}_XB_{3r}(q)\times S^1}+\mathbb{E}\mathbbm{1}_{\pmb{\pi}_X S_r} \mathbbm{1}_{(f^R)^{j-1} \pmb{\pi}_XB_{3r}(q)\times S^1} \\
    &\precsim \mathbb{E}\mathbbm{1}_{(f^R)^{-j}\pmb{\pi}_X S_r} \mathbbm{1}_{ \pmb{\pi}_XB_{3r}(q)\times S^1}+\mathbb{E}\mathbbm{1}_{(f^R)^{-j+1}\pmb{\pi}_X S_r} \mathbbm{1}_{ \pmb{\pi}_XB_{3r}(q)\times S^1}\\
    &\precsim \mathbb{E}\mathbbm{1}_{(f^R)^{-j}\pmb{\pi}_XB_{r}(q)\times S^1 } \mathbbm{1}_{ \pmb{\pi}_XB_{3r}(q)\times S^1}+\mathbb{E}\mathbbm{1}_{(f^R)^{-j+1}\pmb{\pi}_XB_{r}(q)\times S^1 } \mathbbm{1}_{ \pmb{\pi}_XB_{3r}(q)\times S^1}\\
    &\precsim \mathbb{E}\mathbbm{1}_{(f^R)^{-j}\pmb{\pi}_XB_{3r}(q)\times S^1 } \mathbbm{1}_{ \pmb{\pi}_XB_{3r}(q)\times S^1}+\mathbb{E}\mathbbm{1}_{(f^R)^{-j+1}\pmb{\pi}_XB_{3r}(q)\times S^1 } \mathbbm{1}_{ \pmb{\pi}_XB_{3r}(q)\times S^1}\\
    &\precsim_{q,s} \mathbb{E}\mathbbm{1}_{(f^R)^{-j}\pmb{\pi}_X S_{3r}} \mathbbm{1}_{ \pmb{\pi}_XS_{3r}}+\mathbb{E}\mathbbm{1}_{(f^R)^{-j+1}\pmb{\pi}_X S_{3r}} \mathbbm{1}_{ \pmb{\pi}_XS_{3r}}+r^{1+s},
    \end{align*} where the terms with $(f^R)^{-j+1}$ appear only if $j\ge 2$.
    
    For any $x \in \pmb{\pi}_X S_{3r} \bigcap (f^R)^{-j}\pmb{\pi}_X S_{3r}$ there is an unique $k\ge 0$, such that $m=f^kx \in  S_{3r}$, $f^{i}(m) \in  S_{3r}$ (some $i\ge 1$), and the orbit $m, f(m), f^2(m), \cdots, f^{i}(m)$ visits $X$ $j$ or $j+ 1$ times. Therefore, $\pmb{\pi}_X S_{3r} \bigcap (f^R)^{-j}\pmb{\pi}_X S_{3r} \subseteq \pmb{\pi}_X S_{3r} \bigcap  \pmb{\pi}_X[\bar{\Lambda}_{3r,j}(q)\bigcup \bar{\Lambda}_{3r,j+1}(q)]$, and
    \begin{align*}
      \mathbb{E}\mathbbm{1}_{ S_r} \mathbbm{1}_{\bigcup_{1\le j \le J_K}\Lambda_{2r,i, j}}&\le \sum_{1\le j\le J_K}\mathbb{E}\mathbbm{1}_{ S_r} \mathbbm{1}_{\Lambda_{2r,i, j}}\\
      &\precsim_{q,s} \sum_{1\le j\le J_K} \mathbb{E}\mathbbm{1}_{(f^R)^{-j}\pmb{\pi}_X S_{3r}} \mathbbm{1}_{ \pmb{\pi}_XS_{3r}}+r^{1+s}\\
      &\precsim_{q,s} \sum_{1\le j\le J_K+1} \mathbb{E}\mathbbm{1}_{\pmb{\pi}_X\bar{\Lambda}_{3r,j}(q)} \mathbbm{1}_{ \pmb{\pi}_XS_{3r}}+r^{1+s}\\
      &\precsim_{q,s,K} \sup_{1\le j \le J_K+1} \mathbb{E}\mathbbm{1}_{\pmb{\pi}_X\bar{\Lambda}_{3r,j}(q)} \mathbbm{1}_{ \pmb{\pi}_XS_{3r}}+r^{1+s}.
    \end{align*}
    
    Now we can continue the estimate as $\mathbb{E} \left(\mathbbm{1}_{X'_0=1}  \mathbbm{1}_{\sum_{1\le i \le p-1}X'_i\ge 1}\right) $
    \begin{align*}
   &\precsim_q p\cdot r^{1+s}+\sum_{i\le p}\mathbb{E}\mathbbm{1}_{ S_r} \mathbbm{1}_{\bigcup_{1\le j \le J_K}\Lambda_{2r,i, j}}+\sum_{J_K\le i\le p}\sum_{j > J_K}\mathbb{E}\mathbbm{1}_{\pmb{\pi}_X S_r} \mathbbm{1}_{\pmb{\pi}_X\Lambda_{2r,i, j}}\\
        &\precsim_{q,K}p \cdot r^{1+s}+p\cdot \sup_{1\le j \le J_K+1} \mathbb{E}\mathbbm{1}_{\pmb{\pi}_X\bar{\Lambda}_{3r,j}(q)} \mathbbm{1}_{ \pmb{\pi}_XS_{3r}}+\sum_{J_K\le i\le p}\sum_{j > J_K}\mathbb{E}\mathbbm{1}_{\pmb{\pi}_X S_r} \mathbbm{1}_{\pmb{\pi}_X\Lambda_{2r,i, j}}.
    \end{align*} 
    
    Now a proof is concluded by taking $n \approx_T r^{-1}$.
    \end{proof}

Before estimating the measure of $\pmb{\pi}_X\bar{\Lambda}_{r,j}(q)$ ($j\le J_K+1$) we need one more lemma. Define \[\mathbb{S}_{J_K+1}:=\bigcup_j \bigcup_{0\le k < R|_{X_j}}f^k\big(\bigcup_{0 \le i\le J_K+1}(f^R)^{-i}\mathbb{S}\big). \]

Then any vertical manifold $\{q\}\times S^1$ is partitioned by $\mathbb{S}_{J_K+1}$ into countably many pieces $I_{q,i}$ so that $\{q\}\times S^1=\bigcup_{i}I_{q,i}$. Denote the piece containing $m\in \mathcal{M}$ by $I(m)$.  
\begin{lemma}\label{3}
$\mu_X(\pmb{\pi}_X\{m \in \mathcal{M}: \diam I(m)\le h\}) \precsim_{K} h^{g \alpha^{J_K+2}}$ for any $h>0$.
\end{lemma}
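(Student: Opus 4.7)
The plan is to reduce this to Lemma \ref{lentghofmfd} by showing that every $m$ with $\diam I(m)\le h$ has $\pmb{\pi}_X m$ lying in an $O(h^{\alpha})$-neighborhood of $\bigcup_{0\le i\le J_K+1}(f^R)^{-i}\mathbb{S}$ inside $X$.

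First I would unpack the definition of $\mathbb{S}_{J_K+1}$. If $\diam I(m)\le h$, pick an endpoint $w$ of $I(m)$ with $d(m,w)\le h$ along the vertical curve $\{q\}\times S^1$, where $q=\pi_{\partial Q}m$. Since $w\in \mathbb{S}_{J_K+1}$, there exist an index $j$, an integer $k\in[0,R|_{X_j})$, an integer $i\in[0,J_K+1]$ and a point $y\in X_j$ with $(f^R)^i(y)\in \mathbb{S}$ and $w=f^k(y)$. By the definition $\pmb{\pi}_X=\pi_X\circ\pi^{-1}$ together with the tower projection $\pi(y,k)=f^k(y)$, we get $\pmb{\pi}_X w=y$.

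Next I would use Assumption \ref{assumption}(9). Because $\pmb{\mathbb{S}}\subseteq \mathbb{S}_{J_K+1}$ (the $i=0$ term), the piece $I(m)$ is contained in some cell $I_{q,j'}$ of the coarser vertical partition induced by $\pmb{\mathbb{S}}$. Let $I_m\subseteq I(m)$ be the sub-interval connecting $m$ to $w$, so $\diam I_m\le h$. Assumption \ref{assumption}(9) then yields
\begin{equation*}
\diam \pmb{\pi}_X I_m\le C(\diam I_m)^{\alpha}\le Ch^{\alpha}.
\end{equation*}
Since $\pmb{\pi}_X I_m$ is a connected subset of $X$ with endpoints $\pmb{\pi}_X m$ and $\pmb{\pi}_X w=y$, we conclude $d(\pmb{\pi}_X m,y)\le Ch^{\alpha}$, and therefore
\begin{equation*}
\pmb{\pi}_X m\in N_{Ch^{\alpha}}\Big(\bigcup_{0\le i\le J_K+1}(f^R)^{-i}\mathbb{S}\Big).
\end{equation*}

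Finally I would invoke Lemma \ref{lentghofmfd} with $M=J_K+1$ to obtain
\begin{equation*}
\mu_X\big(\pmb{\pi}_X\{m\in\mathcal{M}:\diam I(m)\le h\}\big)\le \mu_X\Big(N_{Ch^{\alpha}}\Big[\bigcup_{0\le i\le J_K+1}(f^R)^{-i}\mathbb{S}\Big]\Big)\precsim_{K}(Ch^{\alpha})^{g\alpha^{J_K+1}}\precsim_{K}h^{g\alpha^{J_K+2}},
\end{equation*}
which is the claim. The main technical point is the bookkeeping in step two: one has to verify that $I_m$ really is a sub-interval of a single $\pmb{\mathbb{S}}$-cell so that Assumption \ref{assumption}(9) applies (guaranteed by $\pmb{\mathbb{S}}\subseteq\mathbb{S}_{J_K+1}$), and that the endpoint identification $\pmb{\pi}_X w=y$ holds (which can be done after removing a measure-zero set of points sitting exactly on $\mathbb{S}_{J_K+1}$). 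Everything else is a direct application of the previous lemma.
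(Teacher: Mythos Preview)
Your proposal is correct and follows essentially the same route as the paper: both arguments show that $\pmb{\pi}_X m$ lands in the $Ch^{\alpha}$-neighborhood of $\bigcup_{0\le i\le J_K+1}(f^R)^{-i}\mathbb{S}$ by using Assumption \ref{assumption}(9) to control $\diam \pmb{\pi}_X I(m)$, and then invoke Lemma \ref{lentghofmfd} with $M=J_K+1$. Your version is slightly more explicit about the bookkeeping (checking $\pmb{\mathbb{S}}\subseteq\mathbb{S}_{J_K+1}$ so that Assumption \ref{assumption}(9) applies, and handling the endpoint $w$), but the skeleton and the key inputs are identical to the paper's proof.
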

\begin{proof}
From the definition of $I(m)$ we know that $(f^R)^{J_K+1}$ is smooth on $\pmb{\pi}_X I(m)$, where $I(m)$ is contained in $f^kX_j$ for some $j$, and $k\in [0,R|_{X_j})$, $\pmb{\pi}_XI(m) \subseteq [\bigcup_{0 \le i\le J_K+1}(f^R)^{-i}\mathbb{S}]^c $. Besides, $\pmb{\pi}_XI(m)$ has the endpoints in $\bigcup_{0 \le i\le J_K+1}(f^R)^{-i}\mathbb{S}$. By Assumption \ref{assumption} there is a constant $C>0$, such that for any $m\in \{m \in \mathcal{M}: \diam I(m)\le h\}$ \[\diam \pmb{\pi}_X I(m) \le C [\diam I(m)]^{\alpha}\le C h^{\alpha}, \quad \pmb{\pi}_X m \in N_{Ch^{\alpha}} \Big(\bigcup_{0 \le i\le J_K+1}(f^R)^{-i}\mathbb{S}\Big).\]

Therefore, $\pmb{\pi}_X\{m \in \mathcal{M}: \diam I(m)\le h\}\subseteq N_{Ch^{\alpha}} \Big(\bigcup_{0 \le i\le J_K+1}(f^R)^{-i}\mathbb{S}\Big)$. By Lemma \ref{lentghofmfd}, 
\[\mu_X(\pmb{\pi}_X\{m \in \mathcal{M}: \diam I(m)\le h\}) \le \mu_X \Big\{N_{Ch^{\alpha}} \Big(\bigcup_{0 \le i\le J_K+1}(f^R)^{-i}\mathbb{S}\Big)\Big\} \precsim_{K} h^{g \alpha^{J_K+2}},\]which concludes a proof of lemma.
\end{proof}

Now we can estimate the measure of the set $\pmb{\pi}_X\bar{\Lambda}_{r,j}(q)$, $j \ge 1$, which is contained in $\pmb{\pi}_XB_r(q) \times S^1$.
\begin{lemma}\label{4}
For any $j \in [1,J_K+1]$, \begin{align*}
     \mu_{X}\big\{\pmb{\pi}_Xm : \diam I(m) \ge r^{\alpha^{J_K+2}/2}, m \in \bar{\Lambda}_{r,j}(q)\big\} \precsim_K  r^{\alpha^{J_K+2}/2},
\end{align*}\begin{align*}
     \mu_{X}\big(\pmb{\pi}_X\bar{\Lambda}_{r,j}(q)\big)&\le\mu_X\Big\{\pmb{\pi}_X\big(B_r(q)\times S^1\big) \bigcap \pmb{\pi}_X\big\{m \in \mathcal{M}: \diam I(m)\le r^{\alpha^{J_K+2}/2}\big\}\Big\}\\
    &\quad +\mu_X \Big\{\pmb{\pi}_X\big(B_r(q)\times S^1\big) \bigcap \pmb{\pi}_X\big\{m \in \mathcal{M}: \diam I(m)\ge r^{\alpha^{J_K+2}/2}, m \in \bar{\Lambda}_{r,j}(q)\big\}\Big\}.
\end{align*}
\end{lemma}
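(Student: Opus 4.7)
The second inequality is a set-theoretic decomposition. Since $\bar{\Lambda}_{r,j}(q) \subseteq S_r \subseteq B_r(q) \times S^1$, we have $\pmb{\pi}_X \bar{\Lambda}_{r,j}(q) \subseteq \pmb{\pi}_X(B_r(q) \times S^1)$, and splitting $\bar{\Lambda}_{r,j}(q)$ according to whether $\diam I(m) \le r^{\alpha^{J_K+2}/2}$ or $\diam I(m) > r^{\alpha^{J_K+2}/2}$ yields the two terms on the right-hand side, to be controlled by Lemma \ref{3} and by the first inequality of this lemma, respectively.

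For the first inequality, my plan is the following. The shortest route uses $\bar{\Lambda}_{r,j}(q) \subseteq S_r$ together with the section property of $S_r$ and the finiteness $\mathbb{E}R < \infty$ coming from item 7 of Definition \ref{cmz}; these give $\mu_X(\pmb{\pi}_X \bar{\Lambda}_{r,j}(q)) \le \mu_X(\pmb{\pi}_X S_r) = \mathbb{E}R \cdot \mu_\mathcal{M}(S_r) \precsim_q r \le r^{\alpha^{J_K+2}/2}$ for small $r$, since $\alpha^{J_K+2}/2 < 1$. More intrinsically (and matching the exponent structure), one invokes Assumption \ref{assumption}.9: the condition $\diam I(m) \ge r^{\alpha^{J_K+2}/2}$ together with $\diam I(m) \le C (\diam f^R \pmb{\pi}_X I(m))^\alpha$ forces $\diam f^R \pmb{\pi}_X I(m) \ge C' r^{\alpha^{J_K+1}/2}$. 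Smoothness of $(f^R)^{J_K+1}$ on $\pmb{\pi}_X I(m)$, guaranteed by the construction of $\mathbb{S}_{J_K+1}$, then lets one propagate this lower bound across the $j \le J_K+1$ returns via items 3--5 of Assumption \ref{assumption}, while the return-time definition of $\bar{\Lambda}_{r,j}(q)$ forces the $j'$-th image (with $j' \in \{j-1,j\}$) of $\pmb{\pi}_X m$ to land in the small set $\pmb{\pi}_X S_r$; invariance $(f^R)_{*}\mu_X = \mu_X$ then closes the estimate.

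The main obstacle is iterating the H\"older estimate of Assumption \ref{assumption}.9 beyond a single return, since that assumption is formulated only for the first iterate $f^R$ applied to $\pmb{\pi}_X I(m)$. To extend it, one must first check that $f^R \pmb{\pi}_X I(m)$ has tangent vectors in the unstable cone $C^u$ (cf.~\eqref{invariantcone}), after which items 3--5 of Assumption \ref{assumption} apply to all subsequent returns. Once that iteration is justified, the target bound $r^{\alpha^{J_K+2}/2}$ drops out by pairing the propagated diameter lower bound against $\mu_X(\pmb{\pi}_X S_r) \precsim_q r$.
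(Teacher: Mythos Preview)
Your approach (a) is correct and in fact gives a stronger bound than the lemma claims: since $\bar{\Lambda}_{r,j}(q)\subseteq S_r$ and $S_r$ is a section, the injectivity of $\pi_X$ on $\pi^{-1}S_r$ gives exactly $\mu_X(\pmb{\pi}_X S_r)=(\mathbb{E}_{\mu_X}R)\cdot\mu_{\mathcal{M}}(S_r)\precsim r$, with a constant depending only on $\mathbb{E}R$ and $|\partial Q|$ (so you may even drop the subscript $q$). Since $\alpha^{J_K+2}/2<1$, this implies the first inequality for all small $r$. Your identification of the second inequality as a pure set-theoretic decomposition is also correct.

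By contrast, the paper takes a genuinely different route for the first inequality. It never invokes the containment $\bar{\Lambda}_{r,j}(q)\subseteq S_r$ to bound the measure directly; instead it proves a \emph{pointwise diameter bound}: for each vertical piece $I(m)$, it shows that $f^i$ maps $I(m)\cap\bar{\Lambda}_{r,j}(q)$ into $B_r(q)\times S^1$, so $\diam f^i[I(m)\cap\bar{\Lambda}_{r,j}(q)]\precsim r$, and then pulls back through Assumption~\ref{assumption} (items 3 and 9) across the $\le J_K+1$ returns to get $\diam\big(I(m)\cap\bar{\Lambda}_{r,j}(q)\big)\precsim_K r^{\alpha^{J_K+2}}$. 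It then integrates over all vertical lines $\{q'\}\times S^1$, using that when $\diam I(m)\ge h=r^{\alpha^{J_K+2}/2}$ there are at most $O(h^{-1})$ such pieces per line, to obtain $\precsim_K h^{-1}r^{\alpha^{J_K+2}}=r^{\alpha^{J_K+2}/2}$. So the paper's argument exhibits where the exponent $\alpha^{J_K+2}/2$ comes from geometrically, while your shortcut bypasses it entirely and yields $\precsim r$. For the lemma \emph{as stated}, your route is cleaner; the paper's argument carries more structural information (the per-piece Lebesgue bound) but is not needed to reach the displayed conclusion.

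Your approach (b) and the ``main obstacle'' paragraph are unnecessary once (a) is in hand. The lower bound on $\diam f^R\pmb{\pi}_X I(m)$ that you extract there plays no role, and the final appeal to invariance of $\mu_X$ under $f^R$ just reproduces the containment argument of (a); you can safely drop that discussion.
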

\begin{proof}If $m \in \bar{\Lambda}_{r,j}(q)$, then there is $i \ge 1$, such that the orbit $m, f(m), \cdots, f^{i}(m)$ visits $X$ exactly $j \le J_K+1$  times. Besides $f^i(m) \in S_r \subseteq B_r(q)\times S^1$, and $f^i$ is smooth on $I(m)$. Moreover, $j\ge 2$ if $m\in X$.

\textbf{Claim:} For every $m' \in I(m) \bigcap \bar{\Lambda}_{r,j}(q)$ the orbit $m', f(m'), \cdots, f^i(m')$ visits $X$ exactly $j$ times, and $f^{i}(m') \in S_r \subseteq B_r(q)\times S^1$.

It follows from the definition of $I(m)$, $(f^{R})^{J_K+1}$ that is smooth on $\pmb{\pi}_XI(m)$, and $f^i$ is smooth on $I(m)$. Then for each $k \in [0,i]$ the set $\pmb{\pi}_Xf^kI(m)$ is completely contained in some $X_{j_k}$. In particular, $\pmb{\pi}_Xf^im, \pmb{\pi}_Xf^im' \in X_{j_i}$, and each of the orbits $m, f(m), \cdots, f^i(m)$ and $m', f(m'), \cdots, f^i(m')$ visit $X$ exactly $j$ times. Moreover, $S_r$ is a section and $f^{i}m \in S_r$. Then $f^im'$ must belong to $S_r$, and the claim holds.

By (\ref{invariantcone}) and Assumption \ref{assumption}, the tangent vectors in $Df^i \mathcal{T} I(m)\subseteq C^u$ have uniformly bounded slopes. Besides, $f^i[I(m)\bigcap \bar{\Lambda}_{r,j}(q)] \subseteq S_r \subseteq B_r(q) \times S^1$. Then there is a constant $C>0$, which does not depend on $m,r,j,q$, and such that $\diam f^i[I(m)\bigcap \bar{\Lambda}_{r,j}(q)]\le Cr$. By Assumption \ref{assumption} \begin{align*}\diam I(m)\bigcap \bar{\Lambda}_{r,j}(q) \precsim \Big[\diam f^R\Big(\pmb{\pi}_X [I(m)\bigcap \bar{\Lambda}_{r,j}(q)]\Big)\Big]^{\alpha} &\precsim_K \big[\diam f^i\big(I(m)\bigcap \bar{\Lambda}_{r,j}(q)\big)\big]^{\alpha^{J_K+2}}\\
&\precsim_K r^{\alpha^{J_K+2}}.
\end{align*}

Let $h=r^{\alpha^{J_K+2}/2}$. Then
\begin{align*}
    \mu_{X}\big\{\pmb{\pi}_X m : \diam I(m) \ge h,  m &\in \bar{\Lambda}_{r,j}(q)\big\} \precsim \mu_{\mathcal{M}}\big\{m \in \mathcal{M}: \diam I(m) \ge h, m \in \bar{\Lambda}_{r,j}(q)\big\}\\
    &\precsim  \Leb_{\mathcal{M}} \big\{m \in \mathcal{M}: \diam I(m) \ge h, m \in \bar{\Lambda}_{r,j}(q)\big\}\\
    &=\int \Leb_{\{q'\}\times S^1}\big\{m\in \mathcal{M}: \diam I(m) \ge h, m \in \bar{\Lambda}_{r,j}(q)\big\} dq'\\
    &=\int \sum_{\diam I(m)\ge h, \pi_{\partial Q}m=q'} \Leb_{I(m)}\big\{\bar{\Lambda}_{r,j}(q)\big\}dq'\\
    &\precsim_K \int \pi h^{-1} r^{\alpha^{J_K+2}} dq'\precsim_K r^{\alpha^{J_K+2}/2},
\end{align*}\begin{align*}
    \mu_{X}\big(\pmb{\pi}_X\bar{\Lambda}_{r,j}(q)\big)&=\mu_{X}\big(\pmb{\pi}_X B_r(q) \times S^1 \bigcap \pmb{\pi}_X\bar{\Lambda}_{r,j}(q)\big)\\
    &=\mu_X \big\{\pmb{\pi}_Xm: m \in B_r(q)\times S^1, \diam I(m)\le h, m \in \bar{\Lambda}_{r,j}(q) \big\}\\
    & \quad +\mu_X \big\{\pmb{\pi}_Xm: m \in B_r(q)\times S^1, \diam I(m)\ge h, m \in \bar{\Lambda}_{r,j}(q) \big\}\\
    &\le \mu_X \big\{\pmb{\pi}_Xm: m \in B_r(q)\times S^1, \diam I(m)\le h \big\}\\
    & \quad +\mu_X \big\{\pmb{\pi}_Xm: m \in B_r(q)\times S^1, \diam I(m)\ge h, m \in \bar{\Lambda}_{r,j}(q) \big\}\\
    &\le \mu_X\Big\{\pmb{\pi}_X(B_r(q)\times S^1) \bigcap \pmb{\pi}_X\big(m \in \mathcal{M}: \diam I(m)\le r^{\alpha^{J_K+2}/2}\big)\Big\}\\
    &\quad +\mu_X \Big\{\pmb{\pi}_X\big(B_r(q)\times S^1\big) \bigcap \pmb{\pi}_X\big[m \in \mathcal{M}: \diam I(m)\ge r^{\alpha^{J_K+2}/2}, m \in \bar{\Lambda}_{r,j}(q)\big]\Big\}
\end{align*}which concludes a proof of the lemma.
\end{proof}

Now we are ready to estimate convergence rates for (\ref{shortreturn}).
\begin{lemma}\label{rateforshortreturn}
Let $p\ll n^{\min\{s/4,g/(8+8g), g\alpha^{2J_K+4}/16, \alpha^{J_K+2}/16\}}$. Then 
\[(\ref{shortreturn}) \precsim_{q,K,T} r^{\min\{s/2,g/(4+4g), g\alpha^{2J_K+4}/8, \alpha^{J_K+2}/8\}} \text{ for any }r>0.\]
\end{lemma}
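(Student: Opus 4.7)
The plan is to invoke Lemma \ref{2} to decompose $(\ref{shortreturn})$ into three summands and bound each using Lemmas \ref{1}, \ref{3}, \ref{4}. With $n \approx_T r^{-1}$, Lemma \ref{2} gives
\[(\ref{shortreturn}) \precsim_{q,K,T} \mathrm{I} + \mathrm{II} + \mathrm{III},\]
where $\mathrm{I} = p r^s$, $\mathrm{II} = n p \sup_{1 \le j \le J_K+1} \mu_X(\pmb{\pi}_X \bar{\Lambda}_{3r,j}(q))$ (the term coming from short returns with few visits to $X$), and $\mathrm{III} = n \sum_{J_K \le i \le p}\sum_{j > J_K} \mathbb{E}\mathbbm{1}_{\pmb{\pi}_X S_r}\mathbbm{1}_{\pmb{\pi}_X \Lambda_{2r,i,j}}$ (the term from returns with many visits to $X$). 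Each of these three pieces will contribute to one or two of the four slots in the target minimum $\min\{s/2, g/(4+4g), g\alpha^{2J_K+4}/8, \alpha^{J_K+2}/8\}$.

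The first piece $\mathrm{I}$ is immediate: the constraint $p \ll n^{s/4} \approx r^{-s/4}$ gives $\mathrm{I} \ll r^{3s/4} \le r^{s/2}$, filling the $s/2$ slot. For the second piece $\mathrm{II}$, I would apply Lemma \ref{4} with cutoff $h = r^{\alpha^{J_K+2}/2}$. The large-diameter summand in Lemma \ref{4}'s decomposition is directly bounded by $r^{\alpha^{J_K+2}/2}$ via the first assertion of Lemma \ref{4}, while the small-diameter summand is bounded through Lemma \ref{3} by $h^{g\alpha^{J_K+2}} = r^{g\alpha^{2J_K+4}/2}$. Multiplying by $np \approx pr^{-1}$ and invoking the constraints $p \ll n^{g\alpha^{2J_K+4}/16}$ and $p \ll n^{\alpha^{J_K+2}/16}$ respectively produces the $g\alpha^{2J_K+4}/8$ and $\alpha^{J_K+2}/8$ slots. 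For the third piece $\mathrm{III}$, Lemma \ref{1} gives $\mu_X(\pmb{\pi}_X \Lambda_{2r,i,j}) \precsim_K r^{g/(g+1)}$ for every pair $(i,j)$ with $i \ge j > J_K$, so each summand in the double sum is bounded by $r^{g/(g+1)}$; bookkeeping the double sum and using $p \ll n^{g/(8+8g)}$ together with $n \approx r^{-1}$ yields the $g/(4+4g)$ slot.

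The main obstacle is the careful power-counting in $\mathrm{II}$ and $\mathrm{III}$. The factor $n \approx r^{-1}$ in front of each piece introduces a negative $r$-exponent that must be compensated by a combination of the small-measure bounds from Lemmas \ref{1}, \ref{3}, \ref{4} and the smallness of $p$ coming from the hypothesis. The exponents in the hypothesis on $p$ are (up to a factor of $2$) exactly the target exponents, so the arithmetic balances only if one exploits the index-pair counting in $\mathrm{III}$ economically — in particular, one must avoid a naive $O(p^2)$ overcount by using the disjointness of the $\Lambda_{2r,i,j}$ for fixed $i$, so that only the minimum-exponent contribution survives — and one must match up Lemma \ref{4}'s two pieces with the two corresponding slots in $\mathrm{II}$. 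Once this balance is tracked correctly, summing the four contributions and taking the minimum delivers the stated bound.
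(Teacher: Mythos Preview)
Your plan has a genuine gap: you are throwing away the indicator $\mathbbm{1}_{\pmb{\pi}_X S_r}$ (resp.\ $\mathbbm{1}_{\pmb{\pi}_X S_{3r}}$) in the pieces $\mathrm{II}$ and $\mathrm{III}$, bounding the intersection measures simply by the measure of the other set. That loses exactly the factor of $r$ needed to cancel the prefactor $n\approx r^{-1}$, and the arithmetic you sketch does not close. Concretely, for the large-diameter piece of $\mathrm{II}$ your bound is
\[
np\cdot r^{\alpha^{J_K+2}/2}\approx p\,r^{\alpha^{J_K+2}/2-1},
\]
and since $\alpha^{J_K+2}/2\le 1/2$, the exponent $\alpha^{J_K+2}/2-1$ is negative; no admissible choice of $p\ge 1$ makes this $\precsim r^{\alpha^{J_K+2}/8}$. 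Likewise in $\mathrm{III}$, even granting your disjointness-in-$j$ idea (which is not obvious after projecting by $\pmb{\pi}_X$), you would get at best $n\cdot p\cdot r^{g/(g+1)}\approx p\,r^{-1/(g+1)}$, again divergent.

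What the paper actually does is keep the intersection with $\pmb{\pi}_X S_r$ (a set of measure $\approx r$) and convert the pair of bounds $\mu_X(A_r(q))\approx r$, $\mu_X(C_r)\precsim r^u$ into
\[
\mu_X\big(A_r(q)\cap C_r\big)\precsim_q r^{1+u/2}\quad\text{for }\Leb_{\partial Q}\text{-a.e.\ }q,
\]
via a Schur-test/Borel--Cantelli argument along the sequence $r_k=k^{-4/u}$; here $u=\min\{g/(1+g),\,g\alpha^{2J_K+4}/2,\,\alpha^{J_K+2}/2\}$. With this extra $r$ in hand the paper uses the naive $(p-J_K)^2$ count in $\mathrm{III}$ and still gets $p^2 r^{u/2}$, and similarly $pr^{u/2}$ for each half of $\mathrm{II}$; the hypothesis $p\ll n^{\min\{s,u/2\}/4}$ then gives $r^{\min\{s,u/2\}/2}$ along $r_k$, and a final interpolation (using $r_{k+1}^{-1}r_k\precsim 1$) extends this to all $r>0$. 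Your proposal is missing both the a.e.-$q$ intersection estimate and the passage from the discrete sequence $r_k$ to arbitrary $r$.
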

\begin{proof}
First we will use a corollary of Schur's test. 

\textbf{Claim:} Let there is a constant $C\ge 1$, such that two families of sets $(C_r)_{r>0} and (A_r(q))_{q \in \partial Q, r>0}$ in $X$ satisfy  the relation $\mu_X(A_r(q))=C^{\pm 1} r$, and $\mu_X(C_r)\le C r^u$ for some $u>0$. Then there exists a decreasing sequence $r:=r_k=k^{-4/u}$, such that $\mu_X(A_r(q) \bigcap C_r) \precsim_q r^{1+u/2}$ for $\Leb_{\partial Q}$-a.e. $q\in \partial Q$.

Indeed, by Schur's test $\int_{\partial Q} \mu_X(A_r(q)\bigcap C_r) dq \precsim r \cdot r^u $. Then by Borel-Cantelli lemma, for $\Leb_{\partial Q}$-a.e. $q \in \partial Q$,  $\mu_X(A_r(q)\bigcap C_r)/r^{1+u/2}\precsim_q 1$ if $r:=r_k=k^{-4/u}$. So, the claim holds.

Now we choose $u:=\min\{g/(1+g), g\alpha^{2J_K+4}/2, \alpha^{J_K+2}/2\}$, and apply this claim. Set $r:=r_k=k^{-4/u}$. Observe that $B_r(q) \times S^1$ is a $s$-quasi-section $B_r(q) \times S^1$. Therefore $\mu_X(\pmb{\pi}_X S_r) \approx \mu_X(\pmb{\pi}_XB_{r}(q) \times S^1) \approx r$.

By Lemma \ref{1} we have $\mu_X(\pmb{\pi}_X\Lambda_{2r,i, j})\precsim_K r^{g/(g+1)}\precsim_K r^u$. Then for $\Leb_{\partial Q}$-a.s. $q\in \partial Q$,
\[n \cdot \sum_{J_K\le i\le p}\sum_{j > J_K}\mathbb{E}\mathbbm{1}_{\pmb{\pi}_X S_r} \mathbbm{1}_{\pmb{\pi}_X\Lambda_{2r,i, j}}\precsim_{K, q, T}n (p-J_K)^2 r^{1+u/2}\precsim_{K,q,T}p^2r^{u/2}. \]

Now by Lemma \ref{3} we have $\mu_X(\pmb{\pi}_X\{m \in \mathcal{M}: \diam I(m)\le (3r)^{\alpha^{J_K+2}/2}\}) \precsim_{K} r^{g \alpha^{2J_K+4}/2}\precsim_K r^u$. Then for $\Leb_{\partial Q}$-a.e. $q\in \partial Q$,
\begin{align*}
    np\mu_X\{\pmb{\pi}_X(B_{3r}(q)\times S^1) \bigcap \pmb{\pi}_X\{m \in \mathcal{M}: \diam I(m)\le (3r)^{\alpha^{J_K+2}/2}\}\}&\precsim_{K,q,T}npr^{1+u/2} \precsim_{K,q,T}pr^{u/2}.
\end{align*}

It follows from the Lemma \ref{4} that $\mu_{X}\{\pmb{\pi}_Xm : \diam I(m) \ge (3r)^{\alpha^{J_K+2}/2}, m \in \bar{\Lambda}_{3r,j}(q)\} \precsim_K  r^{\alpha^{J_K+2}/2} \precsim_{K} r^{u}$. Then for $\Leb_{\partial Q}$-a.e. $q\in \partial Q$ 
\begin{align*}
np\mu_X \{\pmb{\pi}_X(B_{3r}(q)\times S^1) \bigcap \pmb{\pi}_X\{m \in \mathcal{M}: \diam I(m)&\ge (3r)^{\alpha^{J_K+2}/2}, m \in \bar{\Lambda}_{3r,j}(q)\}\}\\
&\precsim_{K,q,T}np r^{1+u/2}\precsim_{K,q,T} p r^{u/2}.
\end{align*}

Therefore, by Lemmas \ref{2} and \ref{4}, we have for $\Leb_{\partial Q}$-a.e. $q\in \partial Q$ that 
\begin{align*}
  &n\sum_{i\le p}\mathbb{E}\mathbbm{1}_{B_r(q)\times S^1} \mathbbm{1}_{B_r(q)\times S^1} \circ f^i\\
    &\precsim_{q,K,T} p \cdot r^{s}+n\cdot p\cdot \sup_{1\le j \le J_K+1} \mathbb{E}\mathbbm{1}_{\pmb{\pi}_X\bar{\Lambda}_{3r,j}(q)} \mathbbm{1}_{ \pmb{\pi}_XS_{3r}}+n \cdot \sum_{J_K\le i\le p}\sum_{j > J_K}\mathbb{E}\mathbbm{1}_{\pmb{\pi}_X S_r} \mathbbm{1}_{\pmb{\pi}_X\Lambda_{2r,i, j}}\\
    & \precsim_{q,K,T} p \cdot r^{s}+n\cdot p\cdot \sup_{1\le j \le J_K+1} \mathbb{E}\mathbbm{1}_{\pmb{\pi}_X\bar{\Lambda}_{3r,j}(q)}\mathbbm{1}_{\pmb{\pi}_XS_{3r}} +p^2r^{u/2} \precsim_{q,K} pr^s+pr^{u/2}+p^2r^{u/2}.
\end{align*}

Since $p\ll n^{\min\{s,u/2\}/4}$, then for $r=r_k=k^{-4/u}$ we get
\begin{align*}
    n\sum_{i\le p}\mathbb{E}\mathbbm{1}_{B_r(q)\times S^1} \mathbbm{1}_{B_r(q)\times S^1} \circ f^i \precsim_{q,K,T} r^{\min\{s,u/2\}/2}.
\end{align*} 

For any $r\in [r_{k+1}, r_k]$ there exists $k \in \mathbb{N}$, such that \begin{align*}
    (\ref{shortreturn})& \le n\sum_{i\le p}\mathbb{E}\mathbbm{1}_{B_r(q)\times S^1} \mathbbm{1}_{B_r(q)\times S^1} \circ f^i \\
    &\precsim_{T} r_{k+1}^{-1} \sum_{i \le p} \mathbb{E}\mathbbm{1}_{B_{r_k}(q)\times S^1} \mathbbm{1}_{B_{r_k}(q)\times S^1} \circ f^i \\
    & \precsim_{T} r_{k+1}^{-1}r_k r_{k}^{-1} \sum_{i \le p} \mathbb{E}\mathbbm{1}_{B_{r_k}(q)\times S^1} \mathbbm{1}_{B_{r_k}(q)\times S^1} \circ f^i\precsim_{q,K,T} r^{\min\{s,u/2\}/2},
\end{align*} where the last ``$\precsim_{q,K,T}$" is due to $r^{-1}_{k+1}r_k \precsim 1$.
\end{proof}

\subsubsection{Conclusions of a proof of Theorem \ref{fplthm}}
The following lemma provides an estimate of convergence rate for Theorem \ref{fplthm}.
\begin{lemma}\label{rateconclusion}For $\Leb_{\partial Q}$-a.e. $q\in \partial Q$ holds the relation $d_{TV}\left(\mathcal{N}^{r,q,T},\mathcal{P}\right)\precsim_{q,T} r^a$, where\begin{align*}
    a:=\min\Big\{&\frac{s\xi^2}{48+48\xi},\frac{g\xi^2}{(6+6\xi)(16+16g)}, \frac{\xi^2g\alpha^{2J_K+4}}{192+192\xi},  \frac{\xi^2\alpha^{J_K+2}}{192+192\xi}, \frac{s}{2},\frac{g}{4+4g}, \frac{g\alpha^{2J_K+4}}{8}, \frac{\alpha^{J_K+2}}{8}\Big\},
\end{align*} where $J_K$ is a positive integer, which depends only upon $K$, which was defined in the proof of Lemma \ref{1}.
\end{lemma}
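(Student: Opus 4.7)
The plan is to assemble the bounds obtained in the preceding subsections and then to optimise the auxiliary parameters $p,m(N),N,\delta$ as functions of $r$. From \eqref{totalconvergencerate}, Lemma~\ref{subtract}, and the three-term decomposition $\sup_{h\in[0,1]}|\mathbb{E}h(X_1',\ldots,X_n')-\mathbb{E}h(\hat X_1,\ldots,\hat X_n)|\precsim_T R_1+R_2+R_3$ introduced just after Lemma~\ref{subtract}, one obtains, for $\Leb_{\partial Q}$-a.e.\ $q\in\partial Q$,
\[
d_{TV}(\mathcal{N}^{r,q,T},\mathcal{P})\ \precsim_{q,T}\ r+N^{-\xi^2(1-\delta)/(2+2\xi)}+R_1+R_2+R_3,
\]
with $R_1,R_2$ controlled by Lemmas~\ref{rateforpoissoncorrelation} and~\ref{rateforshortreturn} as soon as $m(N)>(\mathbb{E}R+\epsilon)N$ and $p\ll n^{u/2}$, where
\[
u:=\min\{s/2,\,g/(4+4g),\,g\alpha^{2J_K+4}/8,\,\alpha^{J_K+2}/8\}.
\]

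I would next substitute $n\approx r^{-1}$ and choose $p:=\lfloor n^{u/2-\eta}\rfloor$ for a fixed small $\eta>0$ (strictly enforcing the short-return constraint of Lemma~\ref{rateforshortreturn}), $m(N):=\lfloor p/3\rfloor$ (so that $p-2m(N)\approx p$, and Lemma~\ref{rateforpoissoncorrelation} is usable), and $N$ as the largest value of the sparse sequence $\lfloor j^{(4+4\xi)/[\xi^2(1-\delta)]}\rfloor$ from Lemma~\ref{truncatehole} with $(\mathbb{E}R+\epsilon)N<m(N)$. Each summand then becomes a concrete power of $r$: the term $r$ and $R_3\precsim pr$ are $O(r^{1-u/2+\eta})$; $R_2$ is $O_{q,K,T}(r^{u})$; the exponential contribution $n^2\beta^{m(N)\alpha/(2\mathbb{E}R-4\epsilon)}$ inside $R_1$ is superpolynomially small because $m(N)\to\infty$; $(p-2m(N))^{-\xi}\approx p^{-\xi}=O(r^{u\xi/2-O(\eta)})$; and the Lemma~\ref{subtract} term is $N^{-\xi^2(1-\delta)/(2+2\xi)}=O(r^{(u/2)\xi^2(1-\delta)/(2+2\xi)-O(\eta)})$. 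Taking the minimum of these exponents, sending $\eta\downarrow 0$, and optimising $\delta\in(1/2,1)$ produces the exponent $a$ displayed in the statement: the four defining exponents of $u$ themselves contribute the last four entries of the $\min$ (coming from $R_2$), while the same four exponents, multiplied by the factor generated by the Lemma~\ref{subtract} contribution, contribute the first four entries.

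The main obstacle is synchronising the two independent almost-sure statements that feed into the argument. Lemma~\ref{truncatehole} delivers a full-measure set of $q$'s only along the sparse sequence $N_j$, while Lemma~\ref{rateforshortreturn} produces such a set along the Schur-test sequence $r_k=k^{-4/u}$. I would therefore intersect the two full-measure sets of $q$'s, prove the desired bound along a common subsequence of radii with $N$ drawn from the $N_j$'s, and finally transfer the rate to every $r>0$ by the monotonicity device used at the end of the proof of Lemma~\ref{rateforshortreturn}: for $r\in[r_{k+1},r_k]$ both $\mathcal{N}^{r,q,T}$ and $\mu_{\mathcal{M}}(B_r(q)\times S^1)$ change only by bounded multiplicative factors, which are absorbed into the implicit constant.
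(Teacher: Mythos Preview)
Your approach is essentially the paper's: assemble the five error terms from \eqref{totalconvergencerate}, Lemma~\ref{subtract}, and $R_1+R_2+R_3$, then choose $p,m(N),N$ as powers of $n\approx r^{-1}$. The paper does exactly this, but with the specific fixed choices $\delta=2/3$, $\epsilon=\mathbb{E}R/20$, $m(N)=\lceil 21N\mathbb{E}R\rceil$, $p=4m(N)\approx n^{u/4}$ (in your notation), and $N$ taken from the sparse sequence of Lemma~\ref{truncatehole}; plugging these in and reading off the minimum exponent gives the displayed $a$ directly.

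Two remarks. First, your parameter choices ($p\approx n^{u/2-\eta}$ and $\delta$ near $1/2$) do \emph{not} reproduce the displayed exponent $a$: they give a strictly larger rate, roughly $u\xi^2/(8+8\xi)$ rather than the paper's $u\xi^2/(24+24\xi)$. This still proves the lemma (a better rate implies the stated one), but your sentence ``produces the exponent $a$ displayed in the statement'' is inaccurate; to recover exactly the stated $a$ with your $p$ you would need $\delta=5/6$, not an optimisation. Also, you cannot literally send $\eta\downarrow 0$ and $\delta\downarrow 1/2$: the constants in Lemmas~\ref{mldforR}--\ref{truncatehole} depend on $\delta$, so one must fix admissible values once and for all.

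Second, the ``synchronisation'' difficulty you raise in the last paragraph is not present. Lemma~\ref{rateforshortreturn} already extends its bound from the Schur-test sequence $r_k$ to \emph{all} $r>0$ in its final lines, and Lemma~\ref{truncatehole} holds for every $N_j$ once $j\ge j_q$. Thus after intersecting the two full-measure sets of $q$, for each small $r$ you simply select the appropriate $N_j$ and read off the bound; no further monotonicity transfer is needed.
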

\begin{proof}
It follows from Lemma \ref{truncatehole}, Lemma \ref{rateforshortreturn} and Lemma \ref{rateforpoissoncorrelation}, that $p\in (2m(N),n]$, $m(N)> (\mathbb{E}R+\epsilon)N, N>7, \epsilon< \mathbb{E}R/18$, $p \ll  n^{\min\{s/4,g/(8+8g), g\alpha^{2J_K+4}/16, \alpha^{J_K+2}/16\}}$, $N=\lfloor j^{(4+4\xi)/[\xi^2(1-\delta)]} \rfloor$ ($j\in \mathbb{N}$), and $\delta \in (1/2,1)$. Choose now $\delta=2/3$, $\epsilon=\mathbb{E}R/20$,\[j=\Big\lfloor \Big[n^{\min\{s/8,g/(16+16g), g\alpha^{2J_K+4}/32, \alpha^{J_K+2}/32\}}/(84\mathbb{E}R)\Big]^{\xi^2/(12+12\xi)}\Big\rfloor,\quad N=\lfloor j^{(12+12\xi)/\xi^2} \rfloor \ll n,\]\[m(N)=\lceil 21N\mathbb{E}R \rceil, and \quad p=4m(N) \approx n^{\min\{s/8,g/(16+16g), g\alpha^{2J_K+4}/32, \alpha^{J_K+2}/32\}}.\]

Then $p \ll  n^{\min\{s/4,g/(8+8g), g\alpha^{2J_K+4}/16, \alpha^{J_K+2}/16\}} \ll n$.

Now, by (\ref{totalconvergencerate}), Lemma \ref{subtract}, Lemma \ref{rateforshortreturn} and Lemma \ref{rateforpoissoncorrelation}, we have
\begin{align*}
     &d_{TV}\left(\mathcal{N}^{r,q,T},\mathcal{P}\right)\precsim_{q,T} \mu_{\mathcal{M}}\big(B_r(q)\times S^1\big) +N^{-\xi^2(1-\delta)/(2+2\xi)}+ (\ref{shortreturn})+ (\ref{poissoncorrelation})+p\cdot \mu_{\Delta}\big(X'_0=1\big)\\
     &\precsim_{q,T,K, \delta} r+N^{-\xi^2/(6+6\xi)}+r^{\min\{s/2,g/(4+4g), g\alpha^{2J_K+4}/8, \alpha^{J_K+2}/8\}}\\
     &\quad +(p-2m(N))^{-\xi}+n^2\beta^{\frac{m(N)\alpha}{2\mathbb{E}R-4\epsilon}}+pr\\
     & \precsim_{q,T,K} r+r^{\min\{s/8,g/(16+16g), g\alpha^{2J_K+4}/32, \alpha^{J_K+2}/32\}\xi^2/(6+6\xi)}\\
     &\quad +r^{\min\{s/2,g/(4+4g), g\alpha^{2J_K+4}/8, \alpha^{J_K+2}/8\}}+r^{\min\{s/8,g/(16+16g), g\alpha^{2J_K+4}/32, \alpha^{J_K+2}/32\}\xi}\\
     &\quad +r^{1-\min\{s/8,g/(16+16g), g\alpha^{2J_K+4}/32, \alpha^{J_K+2}/32\}}\\
     & \precsim_{q,T,K} r^{\min\{s/8,g/(16+16g), g\alpha^{2J_K+4}/32, \alpha^{J_K+2}/32\}\xi^2/(6+6\xi)} +r^{\min\{s/2,g/(4+4g), g\alpha^{2J_K+4}/8, \alpha^{J_K+2}/8\}}\\
     &\quad +r^{1-\min\{s/8,g/(16+16g), g\alpha^{2J_K+4}/32, \alpha^{J_K+2}/32\}}\\
      & \precsim_{q,T,K} r^{\min\{s/8,g/(16+16g), g\alpha^{2J_K+4}/32, \alpha^{J_K+2}/32\}\xi^2/(6+6\xi)} +r^{\min\{s/2,g/(4+4g), g\alpha^{2J_K+4}/8, \alpha^{J_K+2}/8\}},
\end{align*}where the last ``$\precsim_{q,T,K}$" holds because $\min\{s/2,g/(4+4g), g\alpha^{2J_K+4}/8, \alpha^{J_K+2}/8\}\le 4/5$.\end{proof}

\subsection{Applications}\label{sectionapp}
In what follows we will consider four classes of two-dimensional slowly mixing billiards, which were studied in \cite{CZcmp,CZnon, Subbb}. The rates of mixing (decay of correlations) in these billiards are either of order $O(n^{-1})$, or $O(n^{-2})$.

\begin{figure}[!htb]
   \begin{minipage}{0.6\textwidth}
     \includegraphics[width=.6\linewidth]{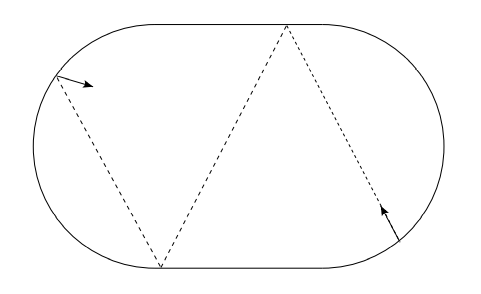}
     \caption{Stadium billiard}
     \label{F3}
   \end{minipage}\hfill
   \begin{minipage}{0.6\textwidth}
     \includegraphics[width=.6\linewidth]{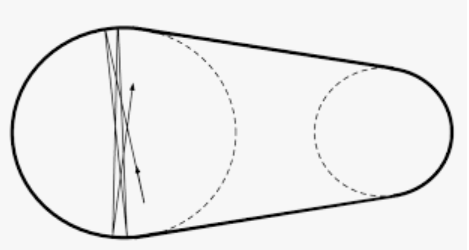}
     \caption{Squash billiard}\label{F4}
   \end{minipage}
\end{figure}

\begin{figure}[!htb]
   \begin{minipage}{0.6\textwidth}
\includegraphics[width=.6\linewidth]{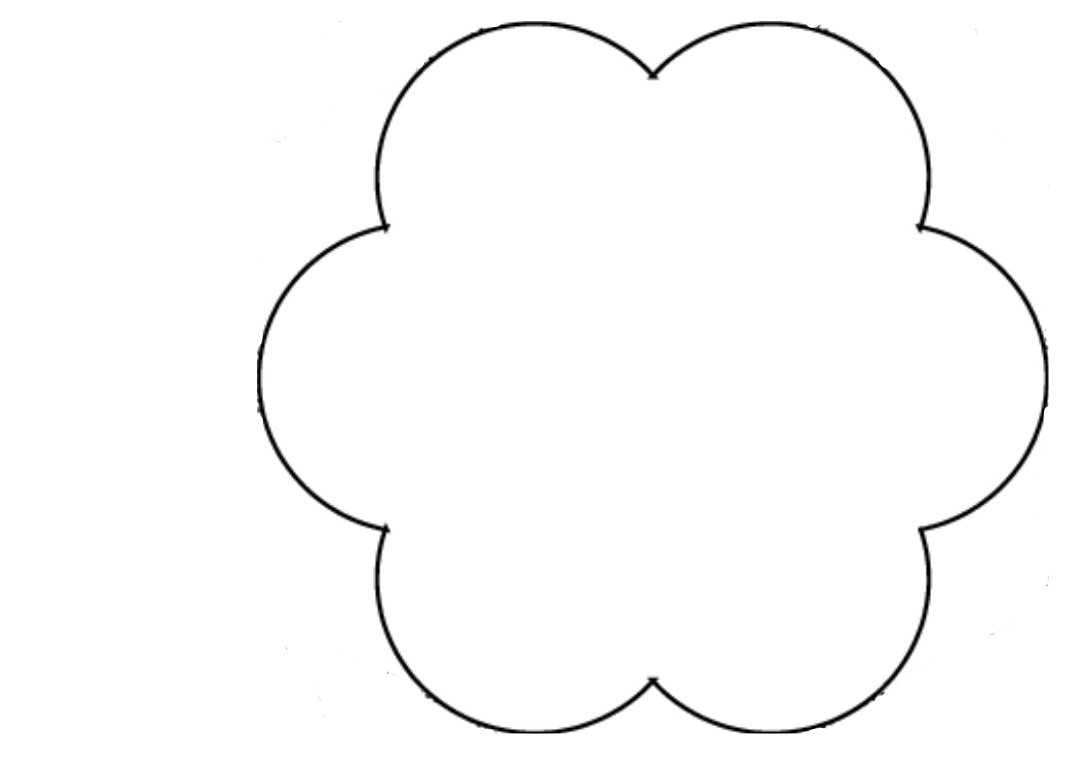}
     \caption{Circular lower billiard}
     \label{F5}
   \end{minipage}\hfill
   \begin{minipage}{0.48\textwidth}
     \includegraphics[width=.6\linewidth]{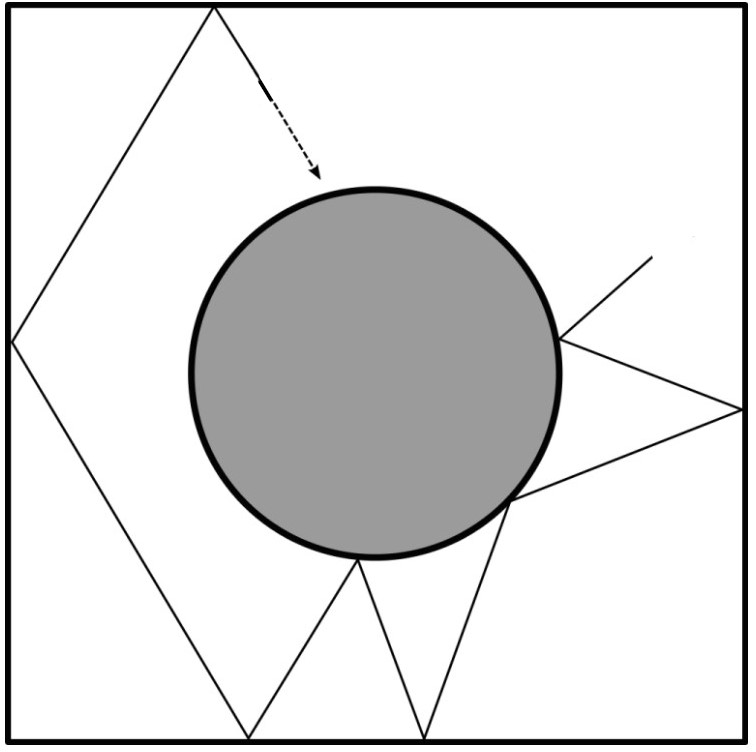}
     \caption{Semi-dispersing billiard}\label{F6}
   \end{minipage}
\end{figure}

A billiard table $Q$ of a squash billiard is a convex domain bounded by two circular arcs and two straight (flat) segments tangent to the arcs at their common endpoints. A squash billiard is called a stadium if the flat sides are parallel, see Figure \ref{F3}. Initially called squash billiards were  
later sometimes called ``skewed" stadia, drive-belt billiards, etc, see Figure \ref{F4}. Note that the squashes contain a boundary arc, which is larger than a half circle.

A billiard table $Q$ of a more general focusing billiard, e.g. a circular flower billiard (see the Figure \ref{F5}), is such, that each smooth component $\Gamma_i$ of the boundary $\partial Q$ is an arc of some circle satisfying SFC-conditions. 

The figure \ref{F6} represents a special class of semi-dispersing billiards. More precisely, let $R_0 \subseteq \mathbb{R}^2$ be a rectangle, and scatterers $B_1, \cdots, B_r \subseteq \interior R_0$ are open strictly convex sub-domains with smooth, (at least $C^3)$, or piece-wise smooth boundaries whose curvature is bounded away from zero,
and such that $B_i \bigcap B_j=\emptyset$ for $i\neq j$. The boundary of a billiard table $Q = R_0\setminus \bigcup_iB_i$ is partially dispersing (convex inwards) $\bigcup_{i}\partial B_i$ and partially neutral (flat) $\partial R_0$.

We verify now the Assumption \ref{assumption} for stadium billiards (Figure \ref{F3}). Let  $X\subseteq \mathcal{M}:=\partial Q \times S^1$ be the region where the first collisions (in a series of consecutive collisions with one and the same circular arc) with circular arcs occur. Denote by $R$ the first return time to $X$ for the billiard map $f$. Denote by $l$ the length of each straight segment. Without loss of generality, we may assume that the radius of each circular arc equals $1$. 

Let $\mathbb{S}_0$ be the union of points where the map $f^R$ is not continuous or not differentiable. Define $\mathbb{S}$ as the union $(f^R)^{-1}\mathbb{S}_0\bigcup \mathbb{S}_0 \bigcup f^R\mathbb{S}_0$. Then some curves in $\mathbb{S}$ are increasing, some are decreasing, and some are vertical. It has been verified in \cite{Subbb} that $\bigcup_{i\ge 1}\partial X_i \subseteq \mathbb{S}$ as well as H\"older continuity along (un)stable curves/manifolds. We have
\begin{align*}
        &d_{f^j{{\gamma}^k}}(f^jx, f^jy)\le C d_{{{\gamma}^k}}(x,y)^\alpha \text{ for all }j< R(x),\\
        &d_{f^j\pmb{{\gamma}^u}}(f^jx, f^jy)\le C d_{\pmb{{\gamma}^u}}(x,y)^\alpha \text{ when }j= R(x).
    \end{align*}
    
    It was proved in \cite{Subbb}  that for any $q\in \partial Q$ and  $B_r(q)\subseteq \partial Q$,  $B_r(q)\times S^1 $ is a  $1$-quasi-section. Besides, it was shown there that the relation $\mu_X(N_{\epsilon}(\mathbb{S}))\precsim  \epsilon^g$ in Assumption \ref{assumption} is in fact a typical property for hyperbolic systems. Therefore we just need to verify the rest of conditions in Assumption \ref{assumption}. 
    
    Throughout this subsection we set $x_m=(q_m,\phi_m)=f^m(x_0)$ for all $m \in \mathbb{Z}$, where $x_0 \in \mathcal{M}$. Denote by $B^{+}(x_m)$ (resp. $B^-(x_m)$) a curvature of a wave front right after (resp. before) collision with the boundary at the point $x_m\in \mathcal{M}$. We list now a few more basic properties and formulas  (see e.g. \cite{CMbook}) for two-dimensional billiards. \begin{gather}
    v_m:=d\phi_m/dq_m=B^-(x_m)\cos \phi_m +\mathcal{K}(q_m)=B^+(x_m)\cos \phi_m -\mathcal{K}(q_m),\nonumber\\
    1/B^{-}(x_{m+1})=\tau_{m}+1/B^{+}(x_{m}),\nonumber\\
    Df(x_m)=\begin{bmatrix}
    \frac{\tau_m\mathcal{K}(q_m)+\cos \phi_m }{-\cos \phi_{m+1}}   & \frac{\tau_m}{-\cos \phi_{m+1}} \\
    \frac{\tau_m\mathcal{K}(q_m)\mathcal{K}(q_{m+1})+\mathcal{K}(q_m)\cos\phi_{m+1} + \mathcal{K}(q_{m+1})\cos \phi_m}{-\cos \phi_{m+1}} \quad & \frac{\tau_m \mathcal{K}(q_{m+1})+\cos \phi_{m+1}}{-\cos \phi_{m+1}}\label{derivativeofbilliardmap}
\end{bmatrix},\\
\frac{||dx_{m+1}||_p}{||dx_m||_p}=\frac{||Df(dx_m)||_p}{||dx_m||_p}=|1+\tau_mB^{+}(x_m)|\label{pesudoderivative},\\
||dx_m||=\frac{||dx_m||_p}{\cos \phi_m} \sqrt{1+(\frac{d\phi_m}{dq_m})^2}\label{rnderivative},
\end{gather}where $\tau_m$ is the length of the free path from $x_{m}$ to $x_{m+1}$, $||dx_m||_p:=\cos \phi_m |dq_m|$, $||dx_m||:=\sqrt{(d\phi_m)^2+(dq_m)^2}$.

\begin{enumerate}
    \item We will verify now that slopes of unstable vectors are uniformly bounded. Let $x_0 \in X$, $dx_0 \in C^u_{x_0}$. Suppose that the points $x_0, x_1, \cdots, x_k$ $(k\ge 0)$ belong to one and the same circular arc, and $x_{k+1}$ does not. Then for any $i \in [0,k]$, $dx_i \in C^{u}(x_i) $, $B^+(x_0)\in [-\frac{2}{\cos \phi_0},-\frac{1}{\cos \phi_0}]$. Then  \begin{gather*}
        B^+(x_1)=\frac{-2}{\cos \phi_1}+\frac{1}{\tau_{0}+\frac{1}{B^+(x_{0})}} \text{ and } B^+(x_{1})\in [-\frac{4}{3\cos \phi_{1}}, -\frac{1}{\cos \phi_{1}}]. \end{gather*}
  Inductively, for any $i\in [0,k]$
  \begin{gather}\label{5}B^{+}(x_i)\in \big[-\frac{2i+2}{(2i+1)\cos \phi_i},-\frac{1}{\cos \phi_{i}}\big],\end{gather} and the slope of $dx_i$ is $\frac{d\phi_i}{dq_i}=\cos(\phi_i)B^{+}(x_i)+1 \in [-1,0]$. Suppose that the points $x_{k+1}, \cdots, x_{k+n}$, $(n\ge 1)$ are on flat sides, and $x_{k+n+1}$ belongs to another circular arc. Then for any $i \in [1,n]$, \begin{gather}\label{6}
        B^+(x_{k+1})=\frac{0}{\cos \phi_{k+1}}+\frac{1}{\tau_{k}+\frac{1}{B^+(x_{k})}}, \cdots \cdots, B^+(x_{k+i})=\frac{1}{\sum_{0\le j \le i-1}\tau_{k+j}+\frac{1}{B^+(x_{k})}}.
    \end{gather} 
    
    Hence, the slope of $dx_{k+i}$ is $\frac{d\phi_{k+i}}{dq_{k+i}}=\cos(\phi_{k+i})B^{+}(x_{k+i}) \in [0,1]$ due to SFC-conditions. Therefore, the slopes of the vectors in $Df^i(x_0)C^u_{x_0}$ $(0\le i< R(x_0))$ are uniformly bounded.
    
     \item Next we verify uniform transversality of (un)stable manifolds and singularities.  It follows from \cite{CMbook} that the slopes of vectors in $\mathcal{T}\gamma^u$ are of order $-1$, and the slopes of vectors in $\mathcal{T}\gamma^s$ are positive and uniformly bounded.  Some of the curves in the set $\bigcup_{i\ge 0 }(f^R)^{-i}\mathbb{S}=\bigcup_{i \ge -1}(f^R)^{-i}\mathbb{S}_0$ are vertical or decreasing. Decreasing curves appear because of $f^R(\mathbb{S}_0)$, and have slops which are $\approx -1$. They are uniformly transversal to the vectors in $\mathcal{T}\gamma^s$. Some curves are increasing with uniformly bounded slopes  due to  existence of the set $\bigcup_{i\ge 0}(f^R)^{-i}\mathbb{S}_0$. These curves are uniformly transversal to vectors of the type $\mathcal{T}\gamma^u$.
    \item Finally we verify that $\diam \gamma^u \le C \diam f^i \gamma^u$ and $\diam \pmb{\gamma^u} \le C (\diam f^i \pmb{\gamma^u})^{\alpha}$. Let $x_0 \in X$, $dx_0 \in C^u_{x_0}$. Suppose that the points $x_0, x_1, \cdots, x_k$ $(k\ge 0)$ belong to the same circular arc, and $x_{k+1}$ does not. From (\ref{5}) we have for any $i \in [0,k]$ that  $B^{+}(x_i)\in \big[-\frac{2i+2}{(2i+1)\cos \phi_i},-\frac{1}{\cos \phi_{i}}\big]$ and $|1+\tau_i B^+(x_i)|\ge 1$. Then \begin{gather*}
        \frac{||dx_i||_p}{||dx_0||_p}=\prod_{j\le i-1}|1+\tau_j B^+(x_j)| \ge 1, \quad \frac{||dx_i||}{||dx_0||}=\frac{||dx_i||_p}{||dx_0||_p} \frac{\cos \phi_0}{\cos \phi_i} \frac{\sqrt{1+(\frac{d\phi_{i}}{dq_{i}})^2}}{\sqrt{1+(\frac{d\phi_{0}}{dq_{0}})^2}} \succsim  1.
    \end{gather*}
    
    Let now the points $x_{k+1}, \cdots, x_{k+n}$ $(n>1)$ are on flat sides, and $x_{k+n+1}$ belongs to another circular arc. From (\ref{6}) we have for any $i \in [1,n]$ that $B^+(x_{k+i})=\big\{\sum_{0\le j \le i-1}\tau_{k+j}+\frac{1}{B^+(x_{k})}\big\}^{-1}$. Then \begin{gather*}
      |1+\tau_{k+i}B^+(x_{k+i})|=\frac{\sum_{0\le j \le i}\tau_{k+j}+\frac{1}{B^+(x_{k})}}{\sum_{0\le j \le i-1}\tau_{k+j}+\frac{1}{B^+(x_{k})}}.
    \end{gather*}
    
    By using SFC-conditions we obtain for any $i\in [k+1, n+k+1]$ that  \begin{gather*}
        \frac{||dx_i||_p}{||dx_0||_p}=\prod_{j\le i-1}|1+\tau_j B^+(x_j)|\ge  \big|\frac{\sum_{0\le j \le i-1-k}\tau_{k+j}+\frac{1}{B^+(x_{k})}}{\frac{1}{B^+(x_k)}}\big| \succsim 1,\\
        \frac{||dx_i||}{||dx_0||}=\frac{||dx_i||_p}{||dx_0||_p} \frac{\cos \phi_0}{\cos \phi_i} \frac{\sqrt{1+(\frac{d\phi_{i}}{dq_{i}})^2}}{\sqrt{1+(\frac{d\phi_{0}}{dq_{0}})^2}}\succsim 1.
    \end{gather*}
    
    Suppose that $x_{k+1}$ is on a flat side, and $x_{k+2}$ is on another circular arc. In this case $\cos \phi_k \ge \cos \phi_{k+1}$. The SFC-conditions imply that \begin{gather*}
    \frac{||dx_{k+1}||_p}{||dx_{k}||_p}=|1+\tau_{k}B^+(x_{k})|\ge \frac{\tau_k }{\cos \phi_k}-1 \ge 1,\\
        \frac{||dx_{k+2}||_p}{||dx_{k}||_p}=|1+(\tau_k+\tau_{k+1})B^+(x_{k})|\ge \frac{\tau_k+\tau_{k+1}-\cos \phi_k}{\cos \phi_k}\succsim_l \frac{1}{ \cos \phi_k},
    \end{gather*}
    
     Therefore,\begin{align*}
        \frac{||dx_{k+1}||}{||dx_{k}||}\approx \frac{||dx_{k+1}||_p}{||dx_{k}||_p} \frac{\cos \phi_{k}}{\cos \phi_{k+1}}\succsim 1,\quad
      \frac{||dx_{k+2}||}{||dx_{k}||}\approx \frac{||dx_{k+2}||_p}{||dx_{k}||_p} \frac{\cos \phi_{k}}{\cos \phi_{k+2}}\succsim \frac{1}{\cos \phi_{k+2}}\succsim 1,
  \end{align*}i.e.,
  \begin{gather*}
      \diam f^{k}\pmb{\gamma^u}(x_0) \precsim \diam f^{k+2}\pmb{\gamma^u}(x_0), \quad
      \diam f^{k}\pmb{\gamma^u}(x_0) \precsim \diam f^{k+1}\pmb{\gamma^u}(x_0).
  \end{gather*}

By summarizing all cases we get \begin{gather*}
d_{\gamma^u}(x,y) \precsim d_{f^i \gamma^u}(f^ix, f^i y) \text{ for any } i < R(x) \text{ and } x, y \in \gamma^u \subseteq X,\\
        d_{\pmb{{\gamma}^u}}(x,y)\precsim d_{f^j\pmb{{\gamma}^u}}(f^jx, f^jy) \text{ for all }j\le R(x) \text{ and any }x,y \in \pmb{\gamma^u} \subseteq \mathbb{S}^c \bigcap X.
    \end{gather*}
     
    \item Next we verify that $K=2$ in the Assumption \ref{assumption}.
    Suppose first that the points $x_0, x_1, \cdots, x_k$ $(k\ge 0)$ belong to one and the same circular arc, while $x_{k+1}, \cdots, x_{k+n}$, $(n>1)$ are on flat sides, and the point $x_{k+n+1}$ belongs to another circular arc. Then \begin{gather*}
        \frac{||dx_{k+n}||}{||dx_i||}=\frac{||dx_{k+n}||_p}{||dx_i||_p} \frac{\cos \phi_i}{\cos \phi_{k+n}} \frac{\sqrt{1+(\frac{d\phi_{k+n}}{dq_{k+n}})^2}}{\sqrt{1+(\frac{d\phi_{i}}{dq_{i}})^2}}\succsim 1 \text{ for any } i\in [k+1,k+n],\\
         \frac{||dx_{k+n}||}{||dx_i||}=\frac{||dx_{k+n}||}{||dx_{k+1}||}\frac{||dx_{k+1}||}{||dx_i||}\succsim \frac{||dx_{k+1}||_p}{||dx_i||_p} \frac{\cos \phi_i}{\cos \phi_{k+1}} \frac{\sqrt{1+(\frac{d\phi_{k+1}}{dq_{k+1}})^2}}{\sqrt{1+(\frac{d\phi_{i}}{dq_{i}})^2}}\succsim 1 \text{ for any } i\in [0,k].
    \end{gather*}
    
    Hence, $\diam f^i\gamma^u(x_0) \precsim \diam f^R \gamma^u (x_0) \precsim \diam (f^R)^2 \gamma^u (x_0)$ for any $i < R(x_0)$.
    
    Now we suppose that the points $x_0, x_1, \cdots, x_k$ $(k\ge 0)$ belong to the same circular arc, the point $x_{k+1}$ is on one of the flat sides, and $x_{k+2}$ belongs to another circular arc. It follows from \cite{CMbook} that $\frac{||dx_{k+2}||}{||dx_k||}\succsim 1$ on $f^k \gamma^u(x_0)$, since $\gamma^u(x_0) \subseteq (\bigcup_{0\le j\le 1}(f^R)^{-j}\mathbb{S})^c$. Hence, we can assume that $f^{k+2}\gamma^u(x_0)$ is in a $k'$-sliding cell. Thus, for any $i\in [0,k]$ we get \begin{gather*}
        \frac{||dx_{k+2}||}{||dx_i||}=\frac{||dx_{k+2}||}{||dx_{k}||}\frac{||dx_{k}||}{||dx_i||}\succsim \frac{||dx_{k}||_p}{||dx_i||_p} \frac{\cos \phi_i}{\cos \phi_{k}} \frac{\sqrt{1+(\frac{d\phi_{k}}{dq_{k}})^2}}{\sqrt{1+(\frac{d\phi_{i}}{dq_{i}})^2}}\succsim 1, \quad  \frac{||dx_{k+2}||}{||dx_{k+1}||} \succsim 1/k'.
    \end{gather*}
    
    From the relations (8.19) and (8.23) in \cite{CMbook} it follows that $\frac{||(Df^R) dx_{k+2}||}{||dx_{k+2}||} \succsim k'$. Hence, $\frac{||(Df^R) dx_{k+2}||}{||dx_{k+1}||} \succsim 1$. Therefore, for any $i\in [0,k]$ \begin{gather*}
        \diam (f^R)^2 \gamma^u(x_0) \succsim \diam (f^R)\gamma^u(x_0) \succsim \diam f^{i}\gamma^u(x_0),\\
        \diam (f^R)^2 \gamma^u(x_0) \succsim \diam f^{k+1}\gamma^u(x_0).
    \end{gather*} 
   
    By summarizing all cases we obtain that  $\diam f^i \gamma^u(x_0) \precsim \diam (f^R)^2 \gamma^u(x_0)$ for any $i< R(x_0)$.
   
    \item We verify now that the conditions hold on any vertical curve $\{q\}\times S^1$. The set $\pmb{\mathbb{S}}$ partitions any vertical curve $\{q\} \times S^1$ into countably many connected pieces. Denote by $I(x_0)$ the connected (sub)piece containing a point $x_0$. Let $dx_0=(0, d\phi_0) \in \mathcal{T}I(x_0)$. By (\ref{invariantcone}) we have $(Df)dx_0 \in C^u_{x_1}$ and $(Df^{-1}) dx_0 \in C^s_{x_{-1}}$. Therefore, $f[I(x_0)]$ is an unstable curve, and $f^{-1}[I(x_0)]$ is a stable curve. We will consider now different cases for $f[I(x_0)]$. 
    
     Suppose that $x_0$ is on a flat side or on a circular arc, and $x_1,\cdots,x_{n-1}$ are on one of the flat sides and $x_n$ is on another circular arc ($n\ge 1$). By (\ref{derivativeofbilliardmap}) \begin{gather*}
    dq_n=-\frac{\tau_0+\tau_1+\cdots+ \tau_{n-1}}{\cos \phi_n}d\phi_0, \quad d\phi_n=\big(\frac{\tau_0+\tau_1+\cdots+ \tau_{n-1}}{\cos \phi_n}-1\big) d\phi_0.
    \end{gather*} Then by the SFC-conditions 
    \[||dx_n|| \succsim ||d\phi_0|| \big(\frac{\tau_0+\tau_1+\cdots+ \tau_{n-1}-\cos \phi_n}{\cos \phi_n}\big) \succsim ||d\phi_0||,\] i.e., $\diam I(x_0) \precsim \diam f^n[I(x_0)]$.
    
    Let now the points $x_0,x_1,\cdots,x_{k}$ ($k\ge 1$) are on the same circular arc, $x_{k+1}, \cdots, x_{k+n}$ are on a flat side, and the point $x_{k+n+1}$ is on another circular arc. we have from (\ref{derivativeofbilliardmap}) that $dq_1=-2d\phi_0$, $d\phi_1=d\phi_0$. Then, $\diam f[I(x_0)]\approx \diam I(x_0)$, and $f[I(x_0)]$ is an unstable curve.  From (\ref{5})  it follows for any $i \in [1,k]$ that  $B^{+}(x_i)\in \big[-\frac{2(i-1)+2}{(2(i-1)+1)\cos \phi_i},-\frac{1}{\cos \phi_{i}}\big]$ and $|1+\tau_i B^+(x_i)|\ge 1$. Then \begin{gather*}
        \frac{||dx_k||_p}{||dx_1||_p}=\prod_{1 \le j\le k-1}|1+\tau_j B^+(x_j)| \ge 1, \quad \frac{||dx_k||}{||dx_1||}=\frac{||dx_k||_p}{||dx_1||_p} \frac{\cos \phi_1}{\cos \phi_k} \frac{\sqrt{1+(\frac{d\phi_{k}}{dq_{k}})^2}}{\sqrt{1+(\frac{d\phi_{1}}{dq_{1}})^2}} \succsim  1.
    \end{gather*} 
    
    If $n>1$, then from (\ref{6}) we have for any $i \in [1,n]$ that $B^+(x_{k+i})=\big\{\sum_{0\le j \le i-1}\tau_{k+j}+\frac{1}{B^+(x_{k})}\big\}^{-1}$. Using now the SFC-conditions we obtain  \begin{gather*}
        \frac{||dx_{k+n+1}||_p}{||dx_1||_p}=\prod_{1 \le j\le k+n}|1+\tau_j B^+(x_j)|\ge  \big|\frac{\sum_{0\le j \le n}\tau_{k+j}+\frac{1}{B^+(x_{k})}}{\frac{1}{B^+(x_k)}}\big| \succsim 1,\\
        \frac{||dx_{k+n+1}||}{||dx_1||}=\frac{||dx_{k+n+1}||_p}{||dx_1||_p} \frac{\cos \phi_1}{\cos \phi_{k+n+1}} \frac{\sqrt{1+(\frac{d\phi_{k+n+1}}{dq_{k+n+1}})^2}}{\sqrt{1+(\frac{d\phi_{0}}{dq_{0}})^2}}\succsim 1.
    \end{gather*}
    
    If $n=1$, then $x_{k+1}$ is on a flat side, and $x_{k+2}$ is on another circular arc. In this case, $\cos \phi_k \ge \cos \phi_{k+1}$. Then the SFC conditions imply  that \begin{gather*}
        \frac{||dx_{k+2}||_p}{||dx_{k}||_p}=|1+(\tau_k+\tau_{k+1})B^+(x_{k})|\ge \frac{\tau_k+\tau_{k+1}-\cos \phi_k}{\cos \phi_k}\succsim_l \frac{1}{ \cos \phi_k},
    \end{gather*}
    
     Therefore,\begin{align*}
      \frac{||dx_{k+2}||}{||dx_{k}||}\approx \frac{||dx_{k+2}||_p}{||dx_{k}||_p} \frac{\cos \phi_{k}}{\cos \phi_{k+2}}\succsim \frac{1}{\cos \phi_{k+2}}\succsim 1,\quad 
  \diam f^k[I(x_0)] \precsim \diam f^{k+2}[I(x_0)].\end{align*}

    Hence, $\diam I(x_0) \precsim \diam f^{k+n+1}[I(x_0)]$. By summarizing all cases we get \[\diam I(x_0)\precsim \diam f^R [\pmb{\pi}_X I(x_0)].\]
   
   Next we prove that $\diam \pmb{\pi}_XI(x_0)\precsim \diam I(x_0)^{\alpha}$. If $x_0 \in X$, then it is obviously true. So we just need to consider the case when $x_0 \notin X$. 
   
   Suppose that $x_0$ is on a flat side, $x_{-1}, \cdots, x_{-n}$ are on the same circular arc, and $x_{-n}\in X$. By (\ref{derivativeofbilliardmap}), we have \[dq_{-1}=\frac{\tau_{-1}}{\cos \phi_{-1}}d\phi_0, \quad d\phi_{-1}=\big(\frac{\tau_{-1}}{\cos \phi_{-1}}-1\big)d\phi_0, \quad dx_{-1} \in C^s_{x_{-1}}.\]
   Then, $||dx_{-1}||\precsim \frac{||dx_0||}{\cos \phi_{-1}}$ and $dx_{-1}, \cdots, dx_{-n} \in C^s$. For any $i\in [1,n]$, $\cos \phi_{-1}=\cos \phi_{-i}$ and \begin{align*}
        &B^+(x_{-1})=\frac{-2}{\cos \phi_{-1}}+\frac{1}{\tau_{-2}+\frac{1}{B^+(x_{-2})}}\in [-\frac{1}{\cos \phi_{-1}},0] \implies B^+(x_{-2})\in [\frac{-1}{\cos \phi_{-1}}, \frac{-2}{3\cos \phi_{-1}}].
    \end{align*} 
    
    Inductively, we obtain for $i>1$ \begin{gather*}
        B^{+}(x_{-i})\in \big[\frac{-1}{\cos \phi_{-1}}, \frac{-2(i-1)}{[2(i-1)+1]\cos \phi_{-1}}\big], \quad |1+\tau_{-i}B^+(x_{-i})|\ge \frac{2(i-1)-1}{2(i-1)+1},
        \end{gather*}
        \begin{gather}\label{7}
      \frac{||dx_{-1}||_p}{||dx_{-n}||_p}=\prod_{2\le i\le n} |1+\tau_{-i}B^+(x_{-i})|\succsim n^{-1},
        \end{gather} which implies that
    \[\frac{||dx_{-1}||}{||dx_{-n}||}=\frac{||dx_{-1}||_p}{||dx_{-n}||_p} \frac{\cos \phi_{-n}}{\cos \phi_{-1}} \frac{\sqrt{1+(\frac{d\phi_{-1}}{dq_{-1}})^2}}{\sqrt{1+(\frac{d\phi_{-n}}{dq_{-n}})^2}}\succsim \frac{1}{n}, \quad ||dx_{-n}|| \precsim n ||dx_{-1}|| \precsim \frac{n}{\cos \phi_{-1}}||dx_{0}||.\]
    
    Since $f^{-n}I(x_0)\subseteq \mathbb{S}^c\bigcap X \subseteq (f^R\mathbb{S}_0)^c \bigcap X$ is a non-decreasing stable curve, and it has an uniformly bounded slope, then $\diam f^{-n}I(x_0) \precsim n^{-2}$. Hence \[[\diam f^{-n}I(x_0)]^2 \precsim \frac{n}{\cos \phi_{-1}} \diam I(x_0) \cdot [\diam f^{-n}I(x_0)] \precsim \diam I(x_0),\] i.e., $\diam f^{-n}I(x_0) \precsim \diam I(x_0)^{1/2}$. 
    
    Suppose that $x_0, x_{-1}, \cdots, x_{-n}$ are on the same circular arc, and $x_{-n}\in X$. By (\ref{derivativeofbilliardmap}) we have \[d\phi_0=d\phi_{-1}, \quad dq_0=-2d\phi_{-1}+dq_{-1},\quad ||dx_{0}||\approx ||dx_{-1}||, \quad \diam I(x_0)\approx \diam f^{-1}I(x_0).\] 
    
    If $i \in (1,n]$, then  \begin{gather*}
        B^{+}(x_{-i})\in \big[\frac{-1}{\cos \phi_{-1}}, \frac{-2(i-1)}{[2(i-1)+1]\cos \phi_{-1}}\big], \quad |1+\tau_{-i}B^+(x_{-i})|\ge \frac{2(i-1)-1}{2(i-1)+1},\\
         \frac{||dx_{-1}||}{||dx_{-n}||}=\frac{||dx_{-1}||_p}{||dx_{-n}||_p} \frac{\cos \phi_{-n}}{\cos \phi_{-1}} \frac{\sqrt{1+(\frac{d\phi_{-1}}{dq_{-1}})^2}}{\sqrt{1+(\frac{d\phi_{-n}}{dq_{-n}})^2}}=\prod_{2\le i\le n} |1+\tau_{-i}B^+(x_{-i})| \frac{\sqrt{1+(\frac{d\phi_{-1}}{dq_{-1}})^2}}{\sqrt{1+(\frac{d\phi_{-n}}{dq_{-n}})^2}} \succsim \frac{1}{n}, \\
         ||dx_{-n}|| \precsim n ||dx_{-1}|| \approx n||dx_{0}||, \quad \diam f^{-n}I(x_0) \approx n\diam I(x_0).        \end{gather*}
         
         If $\cos \phi_{-n} \approx m^{-1}$, then $n\le m$. Since $f^{-n}I(x_0)\subseteq \mathbb{S}^c\bigcap X \subseteq (f^R\mathbb{S}_0)^c \bigcap X$ is a non-decreasing stable curve with  uniformly bounded slope, then $\diam f^{-n}I(x_0) \precsim m^{-2}$.  Therefore, \begin{align*}
         [\diam f^{-n}I(x_0)]^{3/2} &\precsim n \diam I(x_0) \cdot [\diam f^{-n}I(x_0)]^{1/2}\\
         &\precsim m \diam I(x_0) \cdot [\diam f^{-n}I(x_0)]^{1/2}\precsim \diam I(x_0),
         \end{align*}
         that is, $\diam f^{-n}I(x_0) \precsim \diam I(x_0)^{2/3}$. 
         
          Let now a point $x_0$ is on a flat side,  $x_{-1}, x_{-2}, \cdots, x_{-n}$ ($n\ge 1$) are on flat sides, the points  $x_{-n-1}, \cdots, x_{-n-k}$ are on one and the same circular arc, and $x_{-n-k}\in X$.  Since $n \ge 1$, then $k$ is bounded by a constant depending only on $Q$. From (\ref{derivativeofbilliardmap}) we have that \[-dq_{-1}-\frac{\tau_{-1}}{\cos \phi_{0}}d\phi_{-1}=0, \quad d\phi_{0} =-d\phi_{-1}.\]
          
          Then $||dx_{-1}||\approx||dx_{0}||$. From (\ref{7}) in view of $dq_0=0$ we obtain  \begin{gather*}0=\frac{||dx_0||_p}{||dx_{-n-1}||_p}=|1+(\tau_{-n-1}+\tau_{-n}+\cdots+\tau_{-1})B^{+}(x_{-n-1})|,\quad \frac{||dx_{-n-1}||_p}{||dx_{-n-k}||_p}\succsim \frac{1}{k}\succsim 1.
          \end{gather*}
          
          These relations imply that
          \begin{gather*}
         \frac{||dx_{-1}||_p}{||dx_{-n-1}||_p}=|1+(\tau_{-n-1}+\tau_{-n}+\cdots+\tau_{-2})B^{+}(x_{-n-1})|=\frac{\tau_{-1}}{\tau_{-n-1}+\tau_{-n}+\cdots+\tau_{-1}}\succsim \frac{1}{n},\\
         \frac{||dx_{-1}||_p}{||dx_{-n-k}||_p}\succsim \frac{1}{n},\quad \frac{||dx_{0}||}{||dx_{-n-k}||}\approx \frac{||dx_{-1}||}{||dx_{-n-k}||}=\frac{||dx_{-1}||_p}{||dx_{-n-k}||_p} \frac{\cos \phi_{-n-k}}{\cos \phi_{-1}} \frac{\sqrt{1+(\frac{d\phi_{-1}}{dq_{-1}})^2}}{\sqrt{1+(\frac{d\phi_{-n-k}}{dq_{-n-k}})^2}}\succsim \frac{1}{n}.
          \end{gather*}
          
        Since $f^{-n-k}I(x_0)\subseteq \mathbb{S}^c\bigcap X \subseteq (f^R\mathbb{S}_0)^c \bigcap X$ is a non-decreasing stable curve with uniformly bounded slope, then $\diam f^{-n-k}I(x_0) \precsim n^{-2}$.  Therefore, \begin{align*}
         [\diam f^{-n-k}I(x_0)]^{3/2} &\precsim n \diam I(x_0) \cdot [\diam f^{-n-k}I(x_0)]^{1/2}\precsim \diam I(x_0),
         \end{align*}
         i.e., $\diam f^{-n-k}I(x_0) \precsim \diam I(x_0)^{2/3}$.
         
         By summarizing now all cases, we obtain that $\diam \pmb{\pi}_XI(x_0)\precsim \diam I(x_0)^{1/2}$. 
\end{enumerate}

Now all conditions in Assumption \ref{assumption} are verified, and we have the following statement. \begin{corollary}\label{corstadium}
Theorem \ref{fplthm} holds for stadium billiards.
\end{corollary}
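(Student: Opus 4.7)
The strategy is to verify each of the nine conditions collected in Assumption \ref{assumption} for the stadium geometry, and to check that the induced system $(X, f^R)$ built from first collisions with the circular arcs constitutes a CMZ structure in the sense of Definition \ref{cmz}. Once both are in place, Theorem \ref{fplthm} applies verbatim and produces the claimed polynomial convergence rate for $d_{TV}(\mathcal{N}^{r,q,T}, \mathcal{P})$. The CMZ properties (hyperbolicity of $f^R$, SRB measure $\mu_X$, distortion, bounded curvatures, absolute continuity, growth lemmas, and the polynomial tail $\int R^{1+\xi} d\mu_X < \infty$) follow from the existing billiard literature on stadia, in particular \cite{CMbook, CZnon, Chernovjsp, Markarianetds}, so the genuinely new content of the proof is the verification of Assumption \ref{assumption}.

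Two of the conditions are already known: that $B_r(q)\times S^1$ is a $1$-quasi-section for $\Leb_{\partial Q}$-a.e. $q$, and the typical-hyperbolic-system bound $\mu_X(N_\epsilon(\mathbb{S})) \precsim \epsilon^g$ are available from \cite{Subbb}. The inclusion $\bigcup_{i} \partial X_i \subseteq \mathbb{S}$ follows from the definition $\mathbb{S} := (f^R)^{-1}\mathbb{S}_0 \cup \mathbb{S}_0 \cup f^R\mathbb{S}_0$, where $\mathbb{S}_0$ is the discontinuity/non-differentiability set of $f^R$. The remaining items split into three groups: control of unstable cone slopes and transversality with singularities; diameter comparison along the orbit with $K=2$; and the harder Hölder estimates on vertical-curve pieces $I(x_0) \subseteq \{q\}\times S^1$.

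The first group is handled by tracking the curvature $B^+(x_i)$ of the wave front through the inductive identities (\ref{5}) and (\ref{6}). On a string of arc reflections one has $B^+(x_i) \in [-\tfrac{2i+2}{(2i+1)\cos\phi_i}, -\tfrac{1}{\cos\phi_i}]$, which forces the unstable slope $\tfrac{d\phi_i}{dq_i} = \cos(\phi_i)B^+(x_i)+1$ to lie in $[-1,0]$; on the flat segments $\tfrac{d\phi}{dq} \in [0,1]$ follows from $B^+\ge 0$ and the SFC hypothesis. Singularity curves in $\bigcup_i(f^R)^{-i}\mathbb{S}_0$ are either vertical, or nearly $(-1)$-sloped (via $f^R$), or increasing, so they are uniformly transversal to the stable and unstable cones. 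The diameter comparison $\diam f^i\gamma^u \precsim \diam(f^R)^2 \gamma^u$ (so $K=2$) is then obtained from $|1+\tau_i B^+(x_i)|\ge 1$ during arc reflections, from a telescoping calculation across flat segments, and from the ``sliding cell'' gain factor $k'$ of relations (8.19) and (8.23) in \cite{CMbook}, which recovers the possible loss at a single flat reflection in the next step.

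The main obstacle, and the place where the arguments become delicate, is condition (9) on the vertical pieces $I(x_0)$: one needs both $\diam I(x_0) \precsim \diam f^R(\pmb{\pi}_X I(x_0))$ and the backward Hölder bound $\diam \pmb{\pi}_X I(x_0) \precsim \diam I(x_0)^{1/2}$. The forward direction uses that $\mathcal{T}I(x_0) = \{(0, d\phi_0)\}$ is mapped into the unstable cone by (\ref{invariantcone}), together with the same $B^+$ bookkeeping as before. The backward direction is the technical heart: one analyses the preimages $x_0, x_{-1}, \ldots, x_{-n}$ case by case (flat-to-arc, arc-to-arc, flat-to-flat-to-arc), bounds $\|dx_{-1}\|_p / \|dx_{-n}\|_p \succsim n^{-1}$ by the key inequality (\ref{7}), and combines this with the fact that $f^{-n}I(x_0)$ sits inside $(f^R\mathbb{S}_0)^c \cap X$ as a non-decreasing stable curve of bounded slope, whence $\diam f^{-n}I(x_0) \precsim n^{-2}$ (or $\precsim m^{-2}$ when parametrized by $\cos\phi_{-n}\approx m^{-1}$). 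Multiplying the two estimates yields $[\diam f^{-n}I(x_0)]^{3/2} \precsim \diam I(x_0)$, i.e., exponent $\alpha \ge 1/2$. Putting all nine verifications together, Assumption \ref{assumption} is in force, and Theorem \ref{fplthm} together with Lemma \ref{rateconclusion} delivers the corollary with an explicit exponent $a>0$.
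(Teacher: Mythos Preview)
Your proof plan is correct and follows essentially the same approach as the paper: verification of the CMZ structure is deferred to \cite{CMbook, CZnon, Chernovjsp, Markarianetds}, the quasi-section, H\"older continuity along (un)stable curves, and $\partial X_i\subseteq\mathbb{S}$ are imported from \cite{Subbb}, and the remaining items of Assumption \ref{assumption} are checked exactly as you outline---slopes via the wave-front recursions (\ref{5})--(\ref{6}), transversality with the three types of singularity curves, the diameter comparison with $K=2$ via the sliding-cell gain from \cite{CMbook}, and the vertical-curve estimates via (\ref{7}) combined with the $n^{-2}$ cell-size bound. One small refinement: in the flat-to-arc backward case the paper picks up an extra factor $1/\cos\phi_{-1}$ in $\|dx_{-n}\|\precsim \tfrac{n}{\cos\phi_{-1}}\|dx_0\|$, which is why that case yields the worst exponent $\alpha=1/2$ (via a square rather than your $3/2$-power), but your stated conclusion $\alpha\ge 1/2$ is exactly what the paper obtains.
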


\begin{remark}
Although the Assumption \ref{assumption} is verified only for stadium-type billiards, we claim without proof that Theorem \ref{fplthm} holds for squashes, circular flowers, and for a special class of semi-dispersing billiards (see Figures \ref{F4}, \ref{F5} and \ref{F6}). It is true because these billiards share similar structures of singularities with those in stadium billiards. Moreover, the analysis for the stadium-type billiards is technically slightly more complicated
than for the other three types of billiards because a stadium has a continuous family of period two orbits, and it has focusing boundary components, rather than dispersing components in semi-dispersing billiards. Therefore for stadiums one need to consider more cases. We skip completely  similar verification of the Assumption \ref{assumption} for other classes of billiards.

\end{remark}

\section{Applications to One-dimensional Non-uniformly Expanding Maps}\label{onedsection}
In this section, we extend the results of \cite{demersadv}  on the first hitting and escape statistics for the fast mixing one-dimensional maps to arbitrarily slow  mixing one-dimensional maps. 

Let $f:I \to I$, where $I\subseteq \mathbb{R}$ is a bounded closed interval endowed with the Lebesgue measure $\Leb$. We assume that $f$ preserves an ergodic absolutely continuous probability measure $\mu$ on $I$. For any $\alpha, s \in (0, \infty)$ and $z \in I$ let $\tau_{r,z}(x):=\inf\{n \ge 1: f^n(x) \in B_r(z)\}$, and  \begin{gather}\label{demerlimit}
    L_{\alpha, s}(z): =\lim_{r\to 0} \frac{-1}{s \mu\big(B_r(z)\big)^{1-\alpha}}\log \mu\big\{\tau_{r,z}>s \mu\big(B_r(z)\big)^{-\alpha}\big\}.
\end{gather}

\begin{theorem}\label{extendpoissonthm} \par
Suppose that $f: I \to I$ admits a GMY structure (see the Definition \ref{GMY}), i.e., there exist a sub-interval $J \subsetneq I$, a partition $\mathcal{P}$: $\bigcup_{i}I_i=J$ (where each $I_i$ is a sub-interval) and a return time $R: J \to \mathbb{N}$.
Let the first return time $R \in L^{1+\beta}(J, \Leb)$ ($\beta>0$), $\gcd\{R\} =1$ and $z \in I_{cont}:=\{z\in J: f^k \text{ is continuous at } z \text{ for all }k \in \mathbb{N}_0\}$. Then \begin{enumerate}
    \item There is a mixing probability measure $\mu$ on $I$ with $\inf_{x\in I}\frac{d\mu}{d\Leb}(x)>0$.
    \item If $\beta>0$, $\alpha \in (\frac{1}{1+\beta/2}, 1]$, and $z$ is not a periodic point, then $L_{\alpha, s}(z)=1$.
    \item If $\beta>0$, $\alpha \in (\frac{1}{1+\beta/2}, 1]$, $z$ is a $p$-periodic point of $f$, $\frac{d\mu}{d\Leb}$ is continuous at $z$ and $f^p$ is monotonous at $z$, then $L_{\alpha, s}(z)=1-e^{(\phi + \phi\circ f + \cdots +\phi \circ f^{p-1})(z)}$, where $\phi$ is the potential $-\log |Df|$.
\end{enumerate}
\end{theorem}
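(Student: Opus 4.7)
The plan is to reduce to the induced system $F = f^R$ on $J$, which is exponentially mixing by the GMY structure, and then lift the known exponential hitting-time statistics for $F$ back to $f$ using the maximal large deviations bound (Theorem \ref{mld}) applied to the return time $R$.

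Part (1) is standard: the GMY structure yields an ergodic $F$-invariant probability $\mu_J$ on $J$ whose density is bounded above and below by uniform expansion and bounded distortion. Saturating by the return time produces the $f$-invariant probability $\mu = \frac{1}{\mathbb{E}R}\sum_{k\ge 0} f^k_*(\mathbbm{1}_{R>k}\mu_J)$. Mixing follows from $\gcd\{R\} = 1$ together with the exponential mixing of $F$, and $\inf_{x\in I} d\mu/d\Leb > 0$ from the Markov property and uniform expansion of $f^R$.

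For (2) and (3), fix $z \in I_{cont} \subseteq J$. For small $r > 0$, one has $B_r(z) \subseteq J$, and since the $f$-orbit of any $x \in J$ lies outside $J$ between consecutive returns, $\tau_{r,z}(x) = R^{\tau^F_r(x)}(x)$ on $J$, where $\tau^F_r(x) := \inf\{n \ge 1 : F^n x \in B_r(z)\}$ and $R^n := \sum_{i<n} R\circ F^i$. Exponential mixing of $F$ on $J$ and its bounded-below density put us in the setting of \cite{demersadv}, which yields
\begin{equation*}
    \mu_J(\tau^F_r > t) = \exp\bigl(-\theta_z\, t\,\mu_J(B_r(z))\,(1+o(1))\bigr)
\end{equation*}
as $r \to 0$ and $t\mu_J(B_r(z)) \to \infty$, with $\theta_z = 1$ for non-periodic $z$, and $\theta_z = 1 - e^{\phi^p(z)}$ for $p$-periodic $z$ under the monotonicity of $f^p$ and continuity of $d\mu/d\Leb$ at $z$.

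To transfer to $f$, apply Theorem \ref{mld} to the truncated zero-mean observable $R - \mathbb{E}R$ on $(J, F, \mu_J)$, exactly as in Lemma \ref{mldforR}: for any $\delta \in (1/2, 1)$ and $\epsilon > 0$,
\begin{equation*}
    \mu_J\Bigl(\sup_{n \ge N}\bigl|\tfrac{R^n}{n} - \mathbb{E}R\bigr| \ge \epsilon\Bigr) \lesssim_{\epsilon,\delta} N^{-\beta(1-\delta)}.
\end{equation*}
On the complement of this bad set, $R^n = n\mathbb{E}R\,(1 + O(\epsilon))$, so $\{\tau_{r,z} > T\}$ differs from $\{\tau^F_r > T/\mathbb{E}R\}$ only by a multiplicative slack $1 + O(\epsilon)$ in the time variable. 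Setting $T = s\mu(B_r(z))^{-\alpha}$ and using the Kac relation $\mu_J(B_r(z)) \asymp \mathbb{E}R\cdot\mu(B_r(z))$ yields
\begin{equation*}
    \mu(\tau_{r,z} > T) = \exp\bigl(-\theta_z\, s\,\mu(B_r(z))^{1-\alpha}(1+o(1))\bigr) + O\bigl(\mu(B_r(z))^{\alpha\beta(1-\delta)}\bigr).
\end{equation*}
Taking $-\log$ and dividing by $s\,\mu(B_r(z))^{1-\alpha}$, the error term is negligible exactly when $\alpha\beta(1-\delta) > 1-\alpha$; sending $\delta \downarrow 1/2$ forces precisely the hypothesis $\alpha > 1/(1+\beta/2)$, and the resulting limit is $\theta_z$.

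The main obstacle is coordinating the $\epsilon$-slack in the time scale with the polynomial rate of Theorem \ref{mld}, whose implicit constant blows up as $\epsilon \to 0$. The spatial perturbation induced by the slack (which changes $t\mu_J(B_r(z))$ by $O(\epsilon)$) must be absorbed into the $(1+o(1))$ of the induced exponential statistics without contaminating the first-order term $\theta_z s\mu(B_r(z))^{1-\alpha}$, and it is exactly this balancing act that dictates the sharp exponent $1/(1+\beta/2)$. In case (3) the periodic contribution $1 - e^{\phi^p(z)}$ arises from the cluster structure of short returns of the induced orbit to $B_r(z)$ near its own $f^p$-image, and the monotonicity of $f^p$ together with continuity of $d\mu/d\Leb$ at $z$ is what allows one to identify the local expansion factor $|Df^p(z)|^{-1} = e^{\phi^p(z)}$ as the extremal index.
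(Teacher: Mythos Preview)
Your overall strategy --- establish a maximal large deviation bound for the return time and use it to lift the hitting-time statistics of the exponentially mixing induced system back to $f$ --- is exactly the paper's. The paper, however, does not induce only to $J$: it passes to the Young tower $\Delta$, induces to the sub-tower $\Delta_n$ for large $n$, proves MLD for the return time $R_n$ there (Lemma~\ref{mldforrn}), invokes Proposition~3.5 of \cite{demersadv} as a black box to obtain the limit of $\mu_{\Delta_n}\{\tau_{r,z}>s\mu(B_r(z))^{-\alpha}\}$, and finally lets $n\to\infty$ to exhaust $\Delta$ and recover the $\mu$-statement. Your argument, by contrast, stays on $J=\Delta_0$ throughout and tries to jump straight to $\mu$.

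This creates a genuine gap. Everything you derive lives on $(J,\mu_J)$: the identity $\tau_{r,z}=R^{\tau^F_r}$, the MLD bound, and the exponential statistics for $\tau^F_r$ are all $\mu_J$-statements. But in your displayed conclusion you silently write $\mu(\tau_{r,z}>T)$. This passage is not free. From $\mu=\frac{1}{\mathbb{E}R}\sum_{k\ge0}f^k_*(\mathbbm{1}_{R>k}\mu_J)$ and $\tau_{r,z}(f^ky)=\tau_{r,z}(y)-k$ for $0\le k<R(y)$ one gets
\[
\mu(\tau_{r,z}>T)=\frac{1}{\mathbb{E}R}\int_J\min\bigl\{R,\,(\tau_{r,z}-T)^+\bigr\}\,d\mu_J,
\]
and controlling this uniformly in $r$ requires handling the joint tail of $R$ and $\tau_{r,z}$ --- precisely the complication that the tower-exhaustion argument (borrowed by the paper from pp.~1286--1287 of \cite{demersadv}) is designed to sidestep. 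The Kac relation you invoke addresses only the scaling $\mu_J(B_r(z))\asymp\mathbb{E}R\cdot\mu(B_r(z))$, not this measure transfer. So either you must supply a direct argument for the $\mu_J\to\mu$ step, which is doable but not written, or follow the paper and exhaust via $\Delta_n\nearrow\Delta$.
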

\begin{remark}
Theorem \ref{extendpoissonthm} extends the result of Theorem 3.2 in \cite{demersadv} to the case of arbitrarily slow polynomial mixing, i.e., a mixing rate is of order $O(n^{-\beta})$, for dynamics $f:I \to I$, which is assumed to have a GMY structure. This result can be applied, e.g., to the intermittent maps. It follows from the argument in the pages 1284-1285 in \cite{demersadv}, that $L_{\alpha, s}=0$  if $\alpha>1$. Thus we will not consider this singular case.
\end{remark}
\begin{remark}
Although only the case $z \in J$ is considered in Theorem \ref{extendpoissonthm}, it is not much of a restriction, because, once the location of the holes is known, one can chose such $J$ that contains the holes (see the Remark 3.1 in \cite{demersadv}).
\end{remark}
\begin{remark}
According to the Definition \ref{GMY}, the function $F:=f^R$ is monotonous and uniformly expanding on each $I_i\in \mathcal{P}$. 
\end{remark}

The existence of $\mu$ is due to a Young tower \cite{Young2}, which is defined by
\[\Delta:=\{(x,n) \in J \times \mathbb{N}_0: n<R(x)\}.\] 

The dynamics  $F_{\Delta}:\Delta \to \Delta$ sends $(x,n)$ to $(x,n+1)$ if $n+1<R(x)$, and $(x,n)$ to $(f^R(x),0)$ if $n=R(x)-1$. Let $\pi:\Delta \to I$, $\pi(x,n)=f^n(x)$ be a sub-tower \[\Delta_n:=\{(x,l) \in \Delta: l\le n\}.\] 

We identify $\Delta_0$ with $J$. It follows from \cite{Young2} that there is a probability measure  $\mu_{\Delta}$ on $\Delta$, such that $\mu:=\pi_{*}\mu_{\Delta}$ is the one required in the Theorem \ref{extendpoissonthm}, and also that there exists a constant $C>0$, such that for any $I_i \in \mathcal{P}$ and $x,y \in I_i$  \[\frac{d\mu_{J}}{d\Leb}=C^{\pm 1}, \quad \Big|\frac{\frac{d\mu_{J}}{d\Leb}(x)}{\frac{d\mu_{J}}{d\Leb}(y)}-1\Big| \le C \rho^{s(x,y)},\] where  $\mu_{J}:=[\mu(J)]^{-1} \mu|_J$, and the first return time $R$, and $\rho, s(x,y)$ are the ones from the Definition \ref{GMY}. Besides,
\[\mu_{\Delta}=(\int R d\mu_{J})^{-1}\sum_{j} (F^j_{\Delta})_{*}(\mu_J|_{R>j}), \quad \mu_{\Delta_n}:=(\int \min\{R,n+1\} d\mu_J)^{-1}\sum_{j\le n} (F^j_{\Delta})_{*}\mu_J|_{R>j}.\] 

Since $\Delta_{n} \subseteq \Delta$, we denote the first return time on $\Delta_{n}$ by $R_n$, and the first return map on $\Delta_n$ by $F_n:=F_{\Delta}^{R_n}: \Delta_{n} \to \Delta_{ n}$. Clearly, $R_0=R\in L^{1+\beta}(J, \Leb)$ $\implies$ $R_0=R\in L^{1+\beta}(J, \mu_J)$, $R_n\in L^{1+\beta}(\Delta_{n}, \mu_{\Delta_n})$, and $F_0=F=f^R: J \to J$. Let $\mathcal{P}_n$ be the coarsest partition of $\Delta_n$, i.e., for each $Z \in \mathcal{P}_n$, $R_n|_{Z}$ is constant, $F_n^{n+1}|_Z$ is smooth, injective and $F_n^{n+1}(Z)\in \{\Delta_{i+1} \setminus \Delta_{i}, i =0,1,\cdots, n-1\}$. We extend the separation time $s: J \times J \to \mathbb{N}_0$ to $s: \Delta_n \times \Delta_n \to \mathbb{N}_0$ by setting s((x,l), (y,l)):=s(x,y). Before proving Theorem \ref{extendpoissonthm}, we will need one more statement. 
\begin{lemma}[Maximal large deviations for $R_n: \Delta_n \to \mathbb{N}$]\label{mldforrn}\ \par
For any sufficiently large $n\in \mathbb{N}_0$, $\epsilon>0$ and $\delta \in (1/2,1)$ one has  \[\mu_{\Delta_n}\Big(\sup_{m\ge N}\Big|\frac{\sum_{i \le m}R_n\circ F_n^{i}}{m}-\frac{1}{\mu_{\Delta}(\Delta_n)}\Big|\ge \epsilon\Big)\precsim_{n,\delta,\epsilon} N^{-\beta(1-\delta)}.\]
\end{lemma}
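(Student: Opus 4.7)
The plan is to follow the template of the proof of Lemma \ref{mldforR}: exponential mixing for the induced map, then truncation of $R_n$ together with tail control using the $L^{1+\beta}$ bound on $R_n$. First, Kac's lemma applied to $\mu_\Delta$ and the sub-tower $\Delta_n$ gives $\int R_n\,d\mu_{\Delta_n}=\mu_\Delta(\Delta_n)^{-1}$, so the desired estimate is equivalent to a maximal large deviation bound for the centered Birkhoff sums of $R_n-\mathbb{E}R_n$ under $F_n$.

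Next I would view $F_n:\Delta_n\to\Delta_n$ as a Gibbs-Markov map with respect to the partition $\mathcal{P}_n$: each $Z\in\mathcal{P}_n$ has $R_n|_Z$ constant, and the bounded distortion of $F=f^R$ transferred by the sub-tower structure yields a Lipschitz-in-separation-time control on the log-Jacobian of $F_n$. Since $\gcd\{R\}=1$ makes $(\Delta,F_\Delta,\mu_\Delta)$ mixing, $(\Delta_n,F_n,\mu_{\Delta_n})$ is also mixing (up to an arithmetic-progression decomposition if $\gcd\{R_n\}>1$, handled at the end exactly as in Lemma \ref{mldforR}). Consequently, the transfer operator $P_n$ of $F_n$ on a Banach space $\mathcal{B}_n$ of functions Lipschitz in separation time has a spectral gap: for every $p\ge 1$ there are $C_{n,p}>0$ and $\rho_n\in(0,1)$ with
\[
||P_n^m(v-\mathbb{E}v)||_p \le C_{n,p}\,||v||_{\mathcal{B}_n}\,\rho_n^m, \qquad m\ge 1.
\]

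Now truncate: set $R_{n,k}:=\min\{R_n,k\}$. Since $R_n$ is constant on each element of $\mathcal{P}_n$, so is $R_{n,k}$, with $||R_{n,k}||_{\mathcal{B}_n}\le k$. The spectral-gap bound then gives $||P_n^m(R_{n,k}-\mathbb{E}R_{n,k})||_p^p \precsim_{n,p,p'} k^p\, m^{-p'}$ for every $p'>0$, and Theorem \ref{mld} applied to $R_{n,k}-\mathbb{E}R_{n,k}$ yields
\[
\mu_{\Delta_n}\Big(\sup_{m\ge N}\Big|\frac{\sum_{i\le m}(R_{n,k}-\mathbb{E}R_{n,k})\circ F_n^i}{m}\Big|\ge \epsilon/2\Big) \precsim_{n,p,p',\epsilon} k^{2p}\,N^{-p'}.
\]
Since $R_n\in L^{1+\beta}(\Delta_n,\mu_{\Delta_n})$, the tail $||R_n-R_{n,k}||_1\precsim_n k^{-\beta}$, and the maximal ergodic theorem controls the residual term at cost $\precsim_\epsilon k^{-\beta}$. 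Combining and choosing $k=N^{1-\delta}$, $p>\beta/(2\delta-1)$, $p'=\beta+2p(1-\delta)$ balances both contributions at the required order $N^{-\beta(1-\delta)}$.

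The main obstacle is the spectral-gap input: one must verify that the first return map $F_n$ to the sub-tower, together with the coarsest partition $\mathcal{P}_n$, genuinely fits into a Gibbs-Markov framework on which $L^p$ decay of $P_n$ on Lipschitz-in-separation-time observables holds. This is the sub-tower analogue of the exponential mixing bound (\ref{expdecorrelation}) used in the hyperbolic case, and is the technical ingredient without which the application of Theorem \ref{mld} cannot proceed.
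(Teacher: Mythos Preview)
Your overall strategy---Kac's formula, truncation $R_{n,k}=\min\{R_n,k\}$, application of Theorem \ref{mld} to the truncated observable, control of the residual via the $L^{1+\beta}$ tail, and the final optimization $k=N^{1-\delta}$, $p>\beta/(2\delta-1)$, $p'=\beta+2p(1-\delta)$---matches the paper's argument exactly.

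The point you correctly flag as the main obstacle is where the paper supplies an idea you are missing. The map $F_n$ itself is \emph{not} treated as a Gibbs--Markov map: on most of $\Delta_n$ it is just the one-step tower shift $(x,l)\mapsto(x,l+1)$, and it does not send elements of $\mathcal{P}_n$ onto big images. The paper instead observes that the iterate $F_n^{\,n+1}$ maps each $Z\in\mathcal{P}_n$ onto a full level $\{(x,l)\in\Delta_n:l=i\}$, so $\inf_{Z\in\mathcal{P}_n}\mu_{\Delta_n}\big(F_n^{\,n+1}(Z)\big)>0$ and $F_n^{\,n+1}$ is a Gibbs--Markov map with big images in the sense of \cite{MN}. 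The Lasota--Yorke inequalities and the resulting exponential decay $\|P^m v\|_p\precsim_{n,p}\tau_{n,p}^m\|v\|_B$ are established for the transfer operator of $F_n^{\,n+1}$, on a weighted Banach space with $\|v\|_m=\sup_{Z_i\in\mathcal{P}_n}\|v|_{Z_i}\|_\infty\big/ R_n^{1/p}|_{Z_i}$ and the analogous Lipschitz seminorm; the weight $R_n^{1/p}$ is what gives the embedding $B\hookrightarrow L^{p(1+\beta)}$ via $\|R_n\|_{1+\beta}<\infty$. One then runs the Lemma \ref{mldforR} argument for Birkhoff sums along $(F_n^{\,n+1})^i$, and finally passes from $F_n^{\,n+1}$ to $F_n$ using $(F_n)_*\mu_{\Delta_n}=\mu_{\Delta_n}$ exactly as in the $\gcd>1$ step you already anticipate. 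Your proof becomes complete once you replace $F_n$ by its iterate $F_n^{\,n+1}$ in the spectral-gap step.
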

\begin{proof}
Note at first that $F_n^{n+1} (Z)$ is one of the levels $\{(x,l)\in \Delta_n: l =i\}$ ($i \le n$)  for any $Z \in \mathcal{P}_n$. Indeed, $\mathcal{P}_n$ is the coarsest partition, and $f$ admits a GMY structure. Then $\inf_{Z \in \mathcal{P}_n}\mu_{\Delta_n}\big(F^{n+1}_n(Z)\big)>0$, which is called a big image. Note now that the distortion estimate $\big|\frac{\det DF_n^{n+1}(x)}{\det DF_n^{n+1}(y)}-1\big|\le C \rho^{s(F_n^{n+1}x,F_n^{n+1}y)} $ for any $x,y \in Z \in \mathcal{P}_n$ holds because $F=f^R$ (see the Definition \ref{GMY}). Here $\det DF^{n+1}_n$ is the Radon-Nikodym derivative of $F_n^{n+1}$ with respect to $\mu_{\Delta_n}$. The map $F_n^{n+1}$ is a Gibbs-Markov map with big images (see the definition of big images on page 133 in \cite{MN}). Fix any $p\in \mathbb{N}$, and define 
\[||v||_{m}:=\sup_{Z_i \in \mathcal{P}_n}\frac{||v|_{Z_i}||_{\infty}}{R^{1/p}_n|_{Z_i}}, \quad ||v||_{l}:=\sup_{Z_i \in \mathcal{P}_n}\sup_{x,y \in Z_i}\frac{|v(x)-v(y)|}{\rho^{s(x,y)}\cdot R^{1/p}_n|_{Z_i}}. \]

Consider now a Banach space $B:=\{v: ||v||_l< \infty, ||v||_m< \infty \}$, and $||v||_{B}:=||v||_l+||v||_m$. A direct computation gives that $||v||_{p(1+\beta)} \le ||R_n||_{1+\beta}^{1/p} \cdot  ||v||_{B}$. By repeating the proof of Proposition 2.1 in \cite{MN}, and by using $R^{1/p}_n \le R_{n}$, we obtain Lasota-Yorke inequalities \begin{align*}
    ||P^mv||_{B}\precsim_{n,p}\rho^m||v||_{B}+\int |v| d\mu_{\Delta_n}\end{align*} 
    for any $m\ge 0$,
where $P$ is a transfer operator of $F_n^{n+1}$ (see Definition \ref{transferoperator}). If $n$ is sufficiently large, then there exists $\tau_{n,p} \in (0,1)$, such that \[||P^mv||_{p}\precsim_{n,p}||P^mv||_{B}\precsim_{n,p} \tau_{n,p}^m ||v||_{B} \text{ for any zero mean } v \in B, m \ge 1.\] 

Arguing as in Lemma \ref{mldforR}, (and replacing $\widetilde{R \circ \pi_e}$ by $R$, $\widetilde{F_e}$ by $F_n^{n+1}$, $\widetilde{\Delta_e}$ by $\Delta_n$, $\xi$ by $\beta$), we get 
\[\mu_{\Delta_n}\Big(\sup_{m\ge N}\Big|\frac{\sum_{i \le m}(R_n-\int R_nd\mu_{\Delta_n})\circ (F_n^{n+1})^{i}}{m}\Big|\ge \epsilon\Big)\precsim_{n,\delta,\epsilon} N^{-\beta(1-\delta)}.\]

By following now a proof of Lemma \ref{mldforR}, and by making use of $(F_n)_{*}\mu_{\Delta_n}=\mu_{\Delta_n}$, we have \[\mu_{\Delta_n}\Big(\sup_{m\ge N}\Big|\frac{\sum_{i \le m}(R_n-\int R_nd\mu_{\Delta_n})\circ F_n^{i}}{m}\Big|\ge \epsilon\Big)\precsim_{n,\delta,\epsilon} N^{-\beta(1-\delta)}.\]

Since $R_n$ is the first return time to $\Delta_n \subseteq \Delta$, then $\int R_n d\mu_{\Delta_n}=\mu_{\Delta}(\Delta_n)^{-1}$, which concludes a proof of lemma.
\end{proof}

\begin{proof}[Proof of Theorem \ref{extendpoissonthm}] First note that $F=f^R$ satisfies (F1)-(F4), (P) and (U) conditions in the pages 1268 and 1269 of \cite{demersadv}. A map $F_n^{n+1}$ also satisfies (F1)-(F4), (P) and (U) for any sufficiently large $n \in \mathbb{N}$,(see the page 1286 in \cite{demersadv}). Let $A_u:=\big\{y \in \Delta_n: \sup_{m\ge u}\Big|\frac{\sum_{i \le m}R_n\circ F_n^{i}(y)}{m}-\frac{1}{\mu_{\Delta}(\Delta_n)}\Big|\ge \epsilon \big\}$. Then by Lemma \ref{mldforrn} $\mu_{\Delta_n}(A_u)\precsim_{n,\delta,\epsilon} u^{-\beta(1-\delta)}$ for any $\delta \in (1/2,1)$. From Proposition 3.5 in \cite{demersadv}, and from the lines 1-4 on the page 1284 in \cite{demersadv}, if the condition $\alpha>\frac{1}{1+\beta (1-\delta)}$ holds, then we get
\begin{eqnarray*}
\lim_{r\to 0} \frac{-1}{s \mu\big(B_r(z)\big)^{1-\alpha}}\log \mu_{\Delta_n}\big\{\tau_{r,z}>s \mu\big(B_r(z)\big)^{-\alpha}\big\} =
\begin{cases}
1, & z \text{ is not periodic}, \\
1-e^{(\phi+\phi \circ f+ \cdots +\phi \circ f^{p-1})(z)}  & z \text{ is } p\text{-periodic}. \\
\end{cases}
\end{eqnarray*}We remark that the Proposition 3.5 in \cite{demersadv} requires a sufficiently fast decay of $\mu_{\Delta_n}(A_u)$. However, the  proof there deals with a polynomial decay of $\mu_{\Delta_n}(A_u)$ (see the lines 1-4 in the page 1284 in \cite{demersadv}).  Let $n\to \infty$, in order to exhaust the space. Then one can argue as in the pages 1286-1287 in \cite{demersadv}, and obtain a desired limit for (\ref{demerlimit}). Finally, note that $\delta$ is any number in $(1/2,1)$.  Therefore $\alpha >\frac{1}{1+\beta/2}$, which concludes a proof of the theorem.\end{proof}

We apply now the Theorem \ref{extendpoissonthm} to the following class of intermittent maps 

\begin{eqnarray}\label{pmmap}
T_{\gamma}(x) =
\begin{cases}
x+2^{\gamma}x^{1+\gamma}, & 0\le x \le 1/2\\
2x-1,  & 1/2< x \le 1 \\
\end{cases}, \quad \gamma \in [0,1).
\end{eqnarray}

A standard choice of $J$ in the Theorem \ref{extendpoissonthm} is within the interval $[1/2, 1]$, and for the first return time a standard choice is $R \in L^{1/\gamma}(J)$. But it is also possible to choose $J$ inside $[0,1/2]$, as long as $0 \notin \interior{J}$. For example, $J$ could be one of the connected components of the set $[\bigcup_{i\ge 0} (T_{\gamma}|_{[0,1/2]})^{-i}\{1\}]^c$. 

Theorem \ref{extendpoissonthm} immediately implies the following corollary.

\begin{corollary}[intermittent maps]\ \par Let $\beta$ in Theorem \ref{extendpoissonthm} equals $\gamma^{-1}-1$, $z\notin \bigcup_{i\ge 0} T_{\gamma}^{-i}\{1/2\}$. Then \begin{enumerate}
    \item If $\gamma \in (0,1)$, $\alpha \in (\frac{2\gamma}{1+\gamma}, 1]$, and $z$ is not a periodic point, then $L_{\alpha, s}(z)=1$,
    \item If $\gamma \in (0,1)$, $\alpha \in (\frac{2\gamma}{1+\gamma}, 1]$, $z$ is a $p$-periodic point of $T_{\gamma}$, and $\frac{d\mu}{d\Leb}$ is continuous at $z$, then $L_{\alpha, s}(z)=1-e^{(\phi + \phi\circ f + \cdots +\phi \circ f^{p-1})(z)}$, where $\phi$ is a potential $-\log |DT_{\gamma}|$.
\end{enumerate}
\end{corollary}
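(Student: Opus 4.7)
The plan is to verify all hypotheses of Theorem \ref{extendpoissonthm} for $f = T_\gamma$ and then invoke it directly. The first task is to produce the GMY structure. Choose $J$ as indicated (either $J = [1/2,1]$, or a connected component of $[\bigcup_{i\ge 0}(T_\gamma|_{[0,1/2]})^{-i}\{1\}]^c$ inside $[0,1/2]$) and let $R$ be the first return to $J$. A standard construction (going back to Young and to Liverani-Saussol-Vaienti) shows that $T_\gamma^R: J \to J$ is a piecewise $C^{1+}$ expanding Markov map with bounded distortion, and that the return partition $\mathcal{P} = \{I_i\}$ can be read off from the orbits of $1/2$ under $T_\gamma|_{[0,1/2]}$. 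For these canonical choices of $J$ one also gets $\gcd\{R\} = 1$.

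The second step is the tail estimate. For intermittent maps the sharp bound $\Leb(R > n) \approx n^{-1/\gamma}$ is classical, which yields $R \in L^{1+\beta'}(J,\Leb)$ for every $\beta' < \gamma^{-1} - 1$. Hence for any $\alpha > \frac{2\gamma}{1+\gamma}$, the elementary computation
\begin{equation*}
\frac{2(1-\alpha)}{\alpha} \;<\; \frac{2\bigl(1-\frac{2\gamma}{1+\gamma}\bigr)}{\frac{2\gamma}{1+\gamma}} \;=\; \frac{1-\gamma}{\gamma} \;=\; \gamma^{-1}-1
\end{equation*}
lets us pick $\beta' \in \bigl(\frac{2(1-\alpha)}{\alpha},\, \gamma^{-1}-1\bigr)$, and with this choice $R \in L^{1+\beta'}(J,\Leb)$ and $\alpha \in \bigl(\frac{1}{1+\beta'/2},1\bigr]$. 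Thus the quantitative hypotheses of Theorem \ref{extendpoissonthm} are satisfied. (The phrase ``let $\beta$ equal $\gamma^{-1}-1$'' in the statement is just a shorthand: the result holds for every $\alpha$ strictly above the limiting threshold $\frac{2\gamma}{1+\gamma}$.)

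Finally I identify the set $I_{\mathrm{cont}}$. Since $T_\gamma$ is continuous on $[0,1]$ but $T_\gamma$ fails to be $C^1$ only at $1/2$, the iterate $T_\gamma^k$ is smooth (in particular monotone) at a point $z$ for every $k \ge 0$ precisely when $z \notin \bigcup_{i\ge 0} T_\gamma^{-i}\{1/2\}$, matching the hypothesis on $z$. In the periodic case this also immediately gives monotonicity of $T_\gamma^p$ at $z$, because the orbit $z, T_\gamma z, \dots, T_\gamma^{p-1}z$ avoids $1/2$; the continuity of $\frac{d\mu}{d\Leb}$ at $z$ is assumed. Applying Theorem \ref{extendpoissonthm} then yields $L_{\alpha,s}(z) = 1$ in the aperiodic case and $L_{\alpha,s}(z) = 1 - e^{(\phi + \phi\circ f + \cdots + \phi\circ f^{p-1})(z)}$ in the $p$-periodic case, with $\phi = -\log|DT_\gamma|$. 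The only remotely delicate step is the tail control $R \in L^{1/\gamma - \epsilon}$, which is already in the literature; everything else is a routine matching of hypotheses.
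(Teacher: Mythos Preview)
Your proposal is correct and follows the same route as the paper, which simply records the standard choice of $J$ and the fact that $R\in L^{1/\gamma}(J)$ and then says the corollary follows immediately from Theorem~\ref{extendpoissonthm}. You supply more detail than the paper does---in particular your handling of the endpoint issue (working with $\beta'<\gamma^{-1}-1$ rather than $\beta=\gamma^{-1}-1$, since the tail $\Leb(R>n)\approx n^{-1/\gamma}$ only gives $R\in L^{p}$ for $p<1/\gamma$) and your explicit identification of $I_{\mathrm{cont}}$ with the complement of $\bigcup_{i\ge 0}T_\gamma^{-i}\{1/2\}$ are both more careful than the paper's one-line derivation.
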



%
%

\medskip

\bibliography{bibfile}

\end{document}